\documentclass[twoside,a4paper,12pt]{article}

\usepackage{amsmath}
\usepackage{amsthm}
\usepackage{amsfonts}
\usepackage{hyperref}
\usepackage[a4paper, left=15mm, right=15mm]{geometry}

\newcommand{\g}{\ensuremath{\mathfrak g}}

\newcommand{\gl}{\ensuremath{\mathfrak g}{\mathfrak l}}
\newcommand{\go}{\ensuremath{{\mathfrak g}_0}}
\newcommand{\h}{\ensuremath{\mathfrak h}}
\newcommand{\kk}{\ensuremath{\mathfrak k}}
\newcommand{\kko}{\ensuremath{{\mathfrak k}_0}}

\newcommand{\po}{\ensuremath{{\mathfrak p}_0}}
\newcommand{\s}{\ensuremath{\mathfrak s}}

\newcommand{\su}{\ensuremath{{\mathfrak s}{\mathfrak u}}}
\newcommand{\esl}{\ensuremath{{\mathfrak s}{\mathfrak l}}}
\newcommand{\uu}{\ensuremath{\mathfrak u}}

\newcommand{\C}{\ensuremath{\mathbb C}}
\newcommand{\R}{\ensuremath{\mathbb R}}
\newcommand{\N}{\ensuremath{\mathbb N}}
\newcommand{\Z}{\ensuremath{\mathbb Z}}

\theoremstyle{definition}
\newtheorem{definition}{Definition}
\newtheorem{theorem}{Theorem}
\newtheorem{remark}{Remark}
\newtheorem{example}{Example}
\newtheorem{proposition}{Proposition}
\newtheorem{lemma}{Lemma}
\newtheorem{corollary}{Corollary}
\newtheorem{conjecture}{Conjecture}

\newcommand{\ds}{\displaystyle}

\begin{document}

\begin{center}
  {\bf  \Large A coefficient calculus for unitary $(\g,K)$ modules of $SU(2,2)$}
\end{center}
\vspace{5mm}

\begin{center}
  D. Kova\v{c}evi\'{c}{\footnote{e-mail:domagoj.kovacevic@fer.unizg.hr}}
  {\footnote{This work was supported by the QuantiXLie Centre of Excellence, a
  	project co-financed by the Croatian Government and European Union through
  	the European Regional Development Fund - the Competitiveness and Cohesion
  	Operational Programme (grant PK.1.1.02.0004).}}\\
  University of Zagreb, Faculty of Electrical Engineering and Computing,\\
  Unska 3, HR-10000 Zagreb, Croatia
\end{center}
\setcounter{page}{1}
\vspace{7mm}

\begin{abstract}
We develop an explicit operator and coefficient calculus for admissible
$(\g,K)$-modules of $SU(2,2)$, where
$\g=\mathfrak{sl}(4,\C)$ and $K=S(U(2)\times U(2))$. The $K$-types are indexed
by triples $(n,k,m)$, corresponding to the two $SU(2)$-factors and the central
$U(1)$-factor of $K$. We introduce explicit operators $A_\delta$ and
$B_\delta$, associated with the noncompact roots $\delta$, which act on highest
weight vectors of $K$-types, together with auxiliary operators $Q$ and $R$.
We prove their commutator relations, the four central coefficient relations,
and, in the unitary setting, all eight adjoint and norm-factor identities.
This reduces part of the analysis of $(\mathfrak g,K)$-modules to the study of
scalar coefficients obtained from compositions such as $A_\delta B_\delta$ and
$B_\delta A_\delta$.

Our coefficient formula applies only to weights of multiplicity one, and the
weights lying on the boundary of the region all have multiplicity one.
We assume that unitary $(\g,K)$ modules can be constructed from a certain set
of coefficients and we will analyze this construction in a subsequent paper.
We derive necessary sign restrictions together with explicit
relations that constrain the coefficient space. Moreover, these coefficients
also determine irreducibility, which we will address in a subsequent paper.
 
We analyze several families of unitary $(\g,K)$ modules
according to the minimal value of $n+m$. In the case $N=0$, the construction
leads to two-parameter families of unitary modules and describes how
reducibility occurs when certain coefficients vanish. For $N>0$, the method
produces both larger families of $K$-types and smaller multiplicity-free
families, including ladder-type modules. The resulting patterns are compatible
with the known description of the unitary dual of $SU(2,2)$ due to Knapp and
Speh. The paper provides an explicit computational framework for recovering
unitary $(\g,K)$-modules from their $K$-type structure and suggests a possible
approach to similar questions for other real reductive groups.
We emphasize once again that certain questions remain open, and we will address
them in future work.
\end{abstract}

\section{Introduction}

One of main goals of representation theory is determination of the unitary dual
of real reductive groups. The unitary dual, $\hat{G}$, of the group $G$, is the
set of equivalence classes of irreducible unitary representations of $G$. It
plays a crucial role in harmonic analysis on $G$ since it naturally appears in
the Plancherel theorem.

The main problem in the parametrization is the unitarity. Using the Langlands
classification, it is possible to parametrize the admissible dual
(the larger set than $\hat{G}$ which contains some nonunitary representations).
However, the problem of finding unitary representations in the admissible
dual is more subtler. Namely, one can start with nonunitary representation of
parabolic subgroup, induce it to representation of $G$ and obtain unitary
subquotient.

For the group $SU(2,2)$, this problem is particularly interesting since
the unitary dual exhibits many of the phenomena occurring in the general
theory of real reductive groups.
The group $SU(2,2)$ has real rank two and admits discrete series
representations, as well as several families obtained by parabolic
induction.
Its unitary dual contains discrete series and their limits, tempered
representations, complementary series, and certain isolated or
degenerate unitary representations.
The continuous families may involve one or two real parameters, while
the discrete series are parametrized by integral Harish--Chandra data.
The tempered part of the dual is described by parabolic induction from
discrete series representations of appropriate Levi subgroups, whereas
the non-tempered part requires a more delicate analysis of unitarity.
An explicit description of the irreducible unitary representations of
$SU(2,2)$ was obtained by Knapp and Speh \cite{KnappSpeh1982}, with the
Langlands classification providing a natural framework for organizing
the different families.

In order to have algebraic (and combinatorial) problem, we work with $(\g,K)$
modules. Here, \g\ is the complexified Lie algebra of $G$ and $K$ is the maximal
compact subgroup of $G$. Harish-Chandra realized that the essential structure
of the unitary representation $\pi$ lives in the set (space) of $K$-finite
vectors of $\pi$, denoted by $\pi_K$. Hence, it is enough to find $(\g,K)$
modules which admit positive-definite Hermitian form.

In our approach, we pick a highest weight vector for each $K$ type and then
modify the action of operators $X_\delta$ and $Y_\delta$ for noncompact roots
$\delta$. We obtain operators $A_\delta$ and $B_\delta$ acting only on the
highest weight vectors. At first sight, choosing an appropriate basis appears to
be problematic. But, we prefer to work with the compositions $A_\delta B_\delta$
and $B_\delta A_\delta$ when the space of $K$-types for some weight is
one-dimensional. In that case the compositions reduce to scalar coefficients.
We assume that the $(\g,K)$ module is completely determined by this set of
coefficients. We intend to prove this assumption in a subsequent work.
It turns out that these coefficients must be negative real numbers. They can be
parametrized by two real parameters leading to the sets of $K$-types obtained in
\cite{KnappSpeh1982}. 
We have developed a similar approach for the group
$SU(2,1)$ in \cite{kov2021}. Problem of finding unitary dual of the group
$SU(2,1)$ is much simpler since multiplicities of all weights are 1.

The highest weights of $K$-types are indexed by triples $(n,k,m)$ where
$n,m\in\mathbb Z_{\geq0}$ are the two compact highest weights and $k\in\Z$
is the central weight. The $K$-support is contained in a region bounded by
certain affine hyperplanes. In accordance with the standard description of the
$K$-types of irreducible admissible representations, and in particular with
Harish-Chandra's multiplicity-one result for the minimal $K$-type together with
the corresponding $K$-type multiplicity formulas, the $K$-types occurring on the
relevant boundary faces have multiplicity one, whereas higher multiplicities may
occur as one moves into the interior. In the computations below we therefore
begin with the action of the operators $A_\delta B_\delta$ and
$B_\delta A_\delta$ on these boundary $K$-types, where their action on the
highest-weight multiplicity space is scalar.

Several other questions concerning our construction remain open. We are
currently working on them and hope to resolve them positively in the near
future. However, we expect that answer to these questions will produce
2 or 3 papers of this size.

There is a natural question: why do we look for the unitary dual in this way?
We hope that we can develop a powerful technique which can be used to
analyze the unitary dual of other real reductive groups. Since the unitary
dual of $SU(2,2)$ is already known, we can compare our results. At the same
time, the group $SU(2,2)$ is the first group in which the multiplicities of
$K$-types are greater than 1. We hope that most of unpleasant technical
problems will be solved already for the group $SU(2,2)$.

We assume that our modules are irreducible in the generic case. For certain
special values of the parameters, the module may admit a proper submodule or a
nontrivial quotient.

Finally, the set of $K$-types appears to be very simple, which naturally
suggests describing the representation solely in terms of this set. The
corresponding coefficients then determine the set of $K$-types and explain
reducibility in a very natural way. We hope that the same will be possible for
other groups as well.

\section{Terminology}\label{term}

In this paper, we will mostly follow notation from \cite{kn}. Let 
\[ J = \begin{pmatrix}
	I_2 & 0 \\
	0 & -I_2\end{pmatrix} \in M_{4}(\mathbb{C}), \]
where $I_2$ denotes the $2 \times 2$ identity matrix. The group $SU(2,2)$ is
defined as
\[ SU(2,2) = \left\{g \in SL(4,\mathbb{C}) \;\middle|\; g^* J g = J\right\}, \]
where $g^* = \overline{g}^{\,t}$ is the conjugate transpose of $g$.

Thus $SU(2,2)$ consists of complex $4 \times 4$ matrices of determinant $1$
preserving a Hermitian form of signature $(2,2)$.

The Lie algebra $\mathfrak{su}(2,2)$ is given by
\[ \go=\mathfrak{su}(2,2)=\left\{X \in \mathfrak{sl}(4,\mathbb{C})\;\middle|\;
	X^* J + J X = 0\right\}.\]

Equivalently,
\[ \go=\su(2,2)=\left\{X \in M_4(\mathbb{C})\;\middle|\;
	\operatorname{tr}(X) = 0,\;X^* J + J X = 0\right\}. \]
The complexification of $\mathfrak{su}(2,2)$ is
\[ \g=(\g_0)_\C = \mathfrak{sl}(4,\C). \]
Thus $\mathfrak{su}(2,2)$ is a real form of $\mathfrak{sl}(4,\mathbb{C})$.
Let
\begin{equation*}
	\g=\h\oplus\left(\g_\alpha\oplus\g_\beta\oplus\g_\gamma\oplus
		\g_{\alpha+\beta}\oplus\g_{\beta+\gamma}\oplus
		\g_{\alpha+\beta+\gamma}\right)\oplus
		\left(\g_{-\alpha}\oplus\g_{-\beta}\oplus\g_{-\gamma}\oplus
		\g_{-\alpha-\beta}\oplus\g_{-\beta-\gamma}\oplus
		\g_{-\alpha-\beta-\gamma}\right)
\end{equation*}
be the root space decomposition. Hence, our simple roots are denoted by
$\alpha$, $\beta$ and $\gamma$ and, for example,
\begin{equation}\label{rootvec}
	H_\beta=\begin{pmatrix}0&0&0&0\\0&1&0&0\\0&0&-1&0\\0&0&0&0\end{pmatrix},
	X_\alpha=\begin{pmatrix}0&1&0&0\\0&0&0&0\\0&0&0&0\\0&0&0&0
		\end{pmatrix},
	Y_{\beta+\gamma}=\begin{pmatrix}0&0&0&0\\0&0&0&0\\0&0&0&0\\0&1&0&0
		\end{pmatrix}
\end{equation}

Define the Cartan involution $\theta$ on $\su(2,2)$ by
\[ \theta(X) = -X^*. \]
This yields the real Cartan decomposition
\[ \su(2,2) = \kko \oplus \po, \]
where
\[ \kko=\{X\in\su(2,2)\mid\theta(X)=X\},\quad
	\po=\{X\in\su(2,2)\mid\theta(X)=-X\}. \]

A maximal compact subgroup of $SU(2,2)$ is
\[ K = S(U(2) \times U(2)) = \left\{
	\begin{pmatrix}
		k_1 & 0 \\
		0 & k_2
	\end{pmatrix}\;\middle|\;k_1, k_2 \in U(2),\;\det(k_1)\det(k_2)=1\right\}.\]
However, it will be more convenient to think about $K$ as
\begin{equation}\label{K}
	K\cong(SU(2)\times SU(2)\times S^1)/\Z_2.
\end{equation}

The Lie algebra \kko, of $K$, is
\[ \kko=\s(\uu(2)\oplus\uu(2))=\left\{
	\begin{pmatrix}
		A & 0 \\
		0 & D
	\end{pmatrix}\;\middle|\;A, D \in \mathfrak{u}(2),\; \operatorname{tr}(A) +
	\operatorname{tr}(D) = 0\right\}. \]
Again, it will be more convenient to think about \kko\ as
\[ \kko=\su(2)\oplus\su(2)\oplus\uu(1)=\su(2)\oplus\su(2)\oplus\R \]
where $\uu(1)$ has the form
\[ \uu(1)=\left\{
	\begin{pmatrix}
		xiI_2 & 0 \\
		0 & -xiI_2
	\end{pmatrix}\;\middle|\;x\in\R\right\}\cong\R \]
and it commutes with both $\su(2)$.

The complexified Lie algebra is
\[ \kk=(\kko)_C \cong \mathfrak{s}(\gl(2,\mathbb{C})
	\oplus \mathfrak{gl}(2,\mathbb{C})). \]
Again, it will be more convenient to think about \kk\ as
\begin{equation}\label{k}
	\kk=\esl(2,\C)\oplus\esl(2,\C)\oplus\C
\end{equation}
where \C\ has the form
\[ \C=\left\{
\begin{pmatrix}
	xI_2 & 0 \\
	0 & -xI_2
\end{pmatrix}\;\middle|\;x\in\C\right\} \]
and it commutes with both $\esl(2,\C)$. It is easy to see the connection
between formulas \eqref{K} and \eqref{k}. Let us do one more step. In formula
\eqref{k}, the first \esl(2,\C) corresponds to the root $\alpha$ of
\g=\esl(4,\C). The second \esl(2,\C) corresponds to the root $\gamma$ of
\g=\esl(4,\C). The last piece, which we denoted by \C\ actually has the form
\begin{equation*}
	\C=\C(H_\alpha+2H_\beta+H_\gamma)=\C H_0.
\end{equation*}
Namely, $H_0$ commutes with $H_\alpha$, $X_\alpha$, $Y_\alpha$, $H_\gamma$,
$X_\gamma$ and $Y_\gamma$.

The subspace $\mathfrak{p}$ consists of matrices of the form
\[ \mathfrak{p}=\left\{
	\begin{pmatrix}
		0 & Z \\
		T & 0
	\end{pmatrix}\;\middle|\;Z,T \in M_2(\mathbb{C})\right\}. \]

The real rank of $SU(2,2)$ is $2$. A maximal abelian subspace 
$\mathfrak{a}_0\subset \mathfrak{p}_0$ can be chosen as
\[ \mathfrak{a}_0=\left\{
	\begin{pmatrix}
		0 & H \\
		H & 0
	\end{pmatrix}\;\middle|\;H = \operatorname{diag}(t_1, t_2),\; t_i 
	\in \mathbb{R}\right\}. \]
With respect to $\mathfrak{a}$, the restricted root system is of type $C_2$.

Let us analyze our Cartan subalgebra and root vectors. It is easy to see that
\begin{equation*}
	T_\alpha=\begin{pmatrix}0&0&0&0\\0&-i&0&0\\0&0&i&0\\0&0&0&0\end{pmatrix}
\end{equation*}
is an element of \go. Hence, by \eqref{rootvec},
\begin{equation}\label{xyagp}
	H_\alpha=iT_\alpha,\;H_\beta=iT_\beta,\;H_\gamma=iT_\gamma,\quad
	T_\alpha,T_\beta,T_\gamma\in\go.
\end{equation}
There is a difference between compact and noncompact roots. Roots $\alpha$ and
$\gamma$ are compact and
\begin{align}
	X_\alpha=\frac12\left(P_\alpha-iQ_\alpha\right),
	Y_\alpha=-\frac12\left(P_\alpha+iQ_\alpha\right),\\
	X_\gamma=\frac12\left(P_\gamma-iQ_\gamma\right),
	Y_\gamma=-\frac12\left(P_\gamma+iQ_\gamma\right),
\end{align}
for $P_\alpha,Q_\alpha,P_\gamma,Q_\gamma\in\go$. Let us mention that,
\begin{equation*}
	P_\alpha=\begin{pmatrix}0&1&0&0\\-1&0&0&0\\0&0&0&0\\0&0&0&0\end{pmatrix}
	\quad\mbox{and}\quad
	Q_\alpha=\begin{pmatrix}0&i&0&0\\i&0&0&0\\0&0&0&0\\0&0&0&0\end{pmatrix}.
\end{equation*}
Roots $\beta$, $\alpha+\beta$, $\beta+\gamma$ and $\alpha+\beta+\gamma$ are
noncompact and
\begin{gather}
	X_\beta=\frac12\left(P_\beta-iQ_\beta\right),
	Y_\beta=\frac12\left(P_\beta+iQ_\beta\right),\\
	X_{\alpha+\beta}=\frac12\left(P_{\alpha+\beta}-iQ_{\alpha+\beta}\right),
	Y_{\alpha+\beta}=\frac12\left(P_{\alpha+\beta}+iQ_{\alpha+\beta}\right),\\
	X_{\beta+\gamma}=\frac12\left(P_{\beta+\gamma}-iQ_{\beta+\gamma}\right),
	Y_{\beta+\gamma}=\frac12\left(P_{\beta+\gamma}+iQ_{\beta+\gamma}\right),\\
	X_{\alpha+\beta+\gamma}=
		\frac12\left(P_{\alpha+\beta+\gamma}-iQ_{\alpha+\beta+\gamma}\right),
	Y_{\alpha+\beta+\gamma}=\label{xyagz}
		\frac12\left(P_{\alpha+\beta+\gamma}+iQ_{\alpha+\beta+\gamma}\right)
\end{gather}
for $P_\delta,Q_\delta\in\go,\;\delta\in\{\beta,\alpha+\beta,\beta+\gamma,
\alpha+\beta+\gamma\}$. Let us mention that,
\begin{equation*}
	P_\beta=\begin{pmatrix}0&0&0&0\\0&0&1&0\\0&1&0&0\\0&0&0&0\end{pmatrix}
	\quad\mbox{and}\quad
	Q_\beta=\begin{pmatrix}0&0&0&0\\0&0&i&0\\0&-i&0&0\\0&0&0&0\end{pmatrix}.
\end{equation*}
The only difference between compact and noncompact roots is the minus sign on
$Y$ for compact roots. However, this difference is very important.

The center of $SU(2,2)$ is
\[ Z(SU(2,2)) = \{ \pm I_4, \pm iI_4 \}. \]
The group $SU(2,2)$ is a connected semisimple real Lie group of real rank $2$,
with complexified Lie algebra $\mathfrak{sl}(4,\mathbb{C})$, maximal compact
subgroup $S(U(2)\times U(2))$, and rich representation theory including discrete
series, tempered representations, and complementary series.

\section{\texorpdfstring{$(\g,K)$ modules of $SU(2,2)$}{(g,K) modules of
		SU(2,2)}}\label{secgk}

\subsection{\texorpdfstring{Operators $A_*$, $B_*$, $Q$ and $R$}
	{Operators A, B, Q and R}}

We begin with admissible $(\g,K)$-modules.  Unitarity is imposed only in the
later section on adjoints and signs.

We want to treat \kk\ modules as "points". It is enough to take the highest
weight vector $v$ instead of the whole \kk\ module and analyze the action
of \g. The action of \kk\ is unchanged. However, instead of action of vectors
$X_{\alpha+\beta+\gamma}$, $X_{\alpha+\beta}$, $X_{\beta+\gamma}$, $X_\beta$,
$Y_{\alpha+\beta+\gamma}$, $Y_{\alpha+\beta}$, $Y_{\beta+\gamma}$ and $Y_\beta$,
we have to consider the action of operators
$A_{\alpha+\beta+\gamma},\ldots,B_\beta$.
Usually, we simply say "the highest weight vector’ and omit the phrase "of the
\kk‑module’ or "of $K$‑type"; this will be understood from the context.

Let us recall the form of $\esl(2,\C)$ modules.
Let $H=\begin{pmatrix}1&0\\0&-1\end{pmatrix}$,
$X=\begin{pmatrix}0&1\\0&0\end{pmatrix}$ and
$Y=\begin{pmatrix}0&0\\1&0\end{pmatrix}$ be the basis of $\esl(2,C)$.
Let $n\in\N$. Then $V_n$ is spanned by vectors $\{v_{-n},v_{-n+2},\ldots,v_n\}$
and the action is given by
\begin{equation}\label{sl2}
	H.v_j=jv_j,\;\;X.v_j=\frac{n-j}2v_{j+2},\;\;Y.v_j=\frac{n+j}2v_{j-2},\;\;
	j\in\{-n,\ldots,n\},
\end{equation}
with assumption $v_{-n-2}=v_{n+2}=0$.
It is clear that $\operatorname{dim}V_n=n+1$.

Our $K$ types will have the form $V_n\otimes V_m$ where the first \esl(2,\C)
acts on $V_n$ and the second \esl(2,\C) acts on $V_m$. Since
$H_0=H_\alpha+2H_\beta+H_\gamma$ is in the center, it acts as a multiplication
by a scalar, say $k\in\Z$,
\begin{equation*}
	H_0.z=kz,\quad\forall z\in V_n\otimes V_m.
\end{equation*}
Hence, all $K$ types are determined by the triple $(n,k,m)$ where
$n,m\in\N\cup\{0\}$ and $k\in\Z$. We write
\begin{equation*}
	V=\bigoplus_{n,m\in\N\cup\{0\},k\in\Z}V_{n,k,m}.
\end{equation*}
where $\dim V_{n,k,m}\geq1$.
One could write $(V_n\otimes V_m)\otimes M_{n,k,m}$ instead of $V_{n,k,m}$ where
$M_{n,k,m}=\operatorname{Hom}_K(V_n\otimes V_m,V)$. However, it will be more
convenient to work with $V_{n,k,m}$ and then choose specific basis elements.
Let $v_{p,k,q}\in V_{n,k,m}$ be any highest weight vector. Then
\begin{equation*}
	H_\alpha.v_{p,k,q}=pv_{p,k,q},\quad
	X_\alpha.v_{p,k,q}=\frac{n-p}2v_{p+2,k,q},\quad
	Y_\alpha.v_{p,k,q}=\frac{n+p}2v_{p-2,k,q},\quad
\end{equation*}
and
\begin{equation*}
	H_\gamma.v_{p,k,q}=qv_{p,k,q},\quad
	X_\gamma.v_{p,k,q}=\frac{m-q}2v_{p,k,q+2},\quad
	Y_\gamma.v_{p,k,q}=\frac{m+q}2v_{p,k,q-2},\quad
\end{equation*}

Now, let us consider the action of all other elements of \g.
Let us recall that the action satisfies
\begin{equation}\label{form}
	[X,Y].v=X.(Y.v)-Y.(X.v)=X.Y.v-Y.X.v,\quad X,Y\in\g,\;v\in V
\end{equation}
where the action of $X$ on $v$ is denoted by $X.v$.
Sometimes, we will write $v_{p,k,q}(n,m)$ in order to emphasize the dimension
of the $K$ modules which contains that vector and also distinguish that module
from some other modules.
Let us consider the action of $X_{\alpha+\beta+\gamma}$ and let us act on the
highest weight vector $v_{n,k,m}(n,m)$. Later, we will reconstruct the action
to all vectors $v_{p,k,q}(n,m)$ of $V_{n,k,m}$
In order to analyze the element $X_{\alpha+\beta+\gamma}v_{n,k,m}(n,m)$,
we will act on it by $H_\alpha$, $H_\gamma$ and $H_0$ (and use \eqref{form})
\[ H_\alpha.X_{\alpha+\beta+\gamma}.v_{n,k,m}(n,m)=
	X_{\alpha+\beta+\gamma}.H_\alpha.v_{n,k,m}(n,m)+X_{\alpha+\beta+\gamma}.
	v_{n,k,m}(n,m)= \]
\[ =(n+1)X_{\alpha+\beta+\gamma}.v_{n,k,m}(n,m) \]
\[ H_\gamma.X_{\alpha+\beta+\gamma}.v_{n,k,m}(n,m)=
	X_{\alpha+\beta+\gamma}.H_\gamma.v_{n,k,m}(n,m)+X_{\alpha+\beta+\gamma}.
	v_{n,k,m}(n,m)= \]
\[ =(m+1)X_{\alpha+\beta+\gamma}.v_{n,k,m}(n,m) \]
\[ H_0.X_{\alpha+\beta+\gamma}.v_{n,k,m}(n,m)=
	X_{\alpha+\beta+\gamma}.H_0.v_{n,k,m}(n,m)+2X_{\alpha+\beta+\gamma}.
	v_{n,k,m}(n,m)= \]
\[ =(k+2)X_{\alpha+\beta+\gamma}.v_{n,k,m}(n,m) \]
It shows that
\begin{equation}\label{abcrs}
	X_{\alpha+\beta+\gamma}.v_{n,k,m}(n,m)=
	\bigoplus_{r,s\geq 0}v_{n+1,k+2,m+1}(n+1+2r,m+1+2s).
\end{equation}
Since,
\[ X_\alpha.X_{\alpha+\beta+\gamma}.v_{n,k,m}(n,m)=
	X_{\alpha+\beta+\gamma}.X_\alpha.v_{n,k,m,}(n,m)=0, \]
the action of $X_\alpha$ on the right hand side of \eqref{abcrs}
is 0 and hence $r=0$ in \eqref{abcrs}. Similarly, since
\[ X_\gamma.X_{\alpha+\beta+\gamma}.v_{n,k,m}(n,m)=
	X_{\alpha+\beta+\gamma}.X_\gamma.v_{n,k,m}(n,m)=0, \]
$s=0$ in \eqref{abcrs}. Hence
\begin{equation}\label{abc}
	X_{\alpha+\beta+\gamma}.v_{n,k,m}(n,m)=v_{n+1,k+2,m+1}(n+1,m+1).
\end{equation}
Since, $v_{n,k,m}(n,m)$ and $v_{n+1,k+2,m+1}(n+1,m+1)$ are highest weight
vectors, we define
\begin{equation*}
	A_{\alpha+\beta+\gamma}.v_{n,k,m}(n,m)=v_{n+1,k+2,m+1}(n+1,m+1).
\end{equation*}
or
\begin{equation*}
	A_{\alpha+\beta+\gamma}=X_{\alpha+\beta+\gamma}.
\end{equation*}

Now, let us consider the action of $X_{\alpha+\beta}$. Again,
$X_{\alpha+\beta}.v_{n,k,m}(n,m)$ will be analyzed by the action of
$H_\alpha$, $H_\gamma$ and $H_0$,
\[ H_\alpha.X_{\alpha+\beta}.v_{n,k,m}(n,m)=
	X_{\alpha+\beta}.H_\alpha.v_{n,k,m}(n,m)+X_{\alpha+\beta}.
	v_{n,k,m}(n,m)= \]
\[ =(n+1)X_{\alpha+\beta}.v_{n,k,m}(n,m) \]
\[ H_\gamma.X_{\alpha+\beta}.v_{n,k,m}(n,m)=
	X_{\alpha+\beta}.H_\gamma.v_{n,k,m}(n,m)-X_{\alpha+\beta}.
	v_{n,k,m}(n,m)= \]
\[ =(m-1)X_{\alpha+\beta}.v_{n,k,m}(n,m) \]
\[ H_0.X_{\alpha+\beta}.v_{n,k,m}(n,m)=
	X_{\alpha+\beta}.H_0.v_{n,k,m}(n,m)+2X_{\alpha+\beta}.
	v_{n,k,m}(n,m)= \]
\[ =(k+2)X_{\alpha+\beta}.v_{n,k,m}(n,m) \]
It shows that
\begin{equation}\label{abrs}
	X_{\alpha+\beta}.v_{n,k,m}(n,m)=
	\bigoplus_{r,s\geq 0}v_{n+1,k+2,m-1}(n+1+2r,m-1+2s).
\end{equation}
Since,
\[ X_\alpha.X_{\alpha+\beta}.v_{n,k,m}(n,m)=
	X_{\alpha+\beta}.X_\alpha.v_{n,k,m}(n,m)=0, \]
$r=0$ in \eqref{abrs}. However, the same result is not valid for $X_\gamma$.
Namely, $[X_\gamma,X_{\alpha+\beta}]=-X_{\alpha+\beta+\gamma}$ and the action of
$X_{\gamma}$ on the left hand side of \eqref{abrs} by \eqref{abc} is equal to
\[ X_\gamma.X_{\alpha+\beta}.v_{n,k,m}(n,m)=
	-X_{\alpha+\beta+\gamma}.v_{n,k,m}(n,m)=
	-v_{n+1,k+2,m+1}(n+1,m+1). \]
The action of $X_{\gamma}$ on the right hand side of \eqref{abrs}, by
\eqref{sl2} (and we already know that $r=0$) is equal to
\begin{equation*}
	\bigoplus_{s\geq 0}X_{\gamma}.v_{n+1,k+2,m-1}(n+1,m-1+2s)=
	\bigoplus_{s\geq 0}v_{n+1,k+2,m+1}(n+1,m-1+2s).
\end{equation*}
If follows that $s=0$ (and produces 0), or $s=1$ (and produces
$v_{n+1,k+2,m+1}(n+1,m+1)$). One can apply the action of $X_\gamma$ once more
and obtain that $s\leq1$. Hence,
\begin{equation}\label{abt}
	X_{\alpha+\beta}.v_{n,k,m}(n,m)=
	v_{n+1,k+2,m-1}(n+1,m-1)+v_{n+1,k+2,m-1}(n+1,m+1)
\end{equation}
Our calculation also reveals the form of $v_{n+1,k+2,m-1}(n+1,m+1)$. The action
of $X_\gamma$ on the previous formula produces
\[ -X_{\alpha+\beta+\gamma}.v_{n,k,m}(n,m)=v_{n+1,k+2,m+1}(n+1,m+1) \]
and the action of $Y_\gamma$ on that formula produces
\[ -Y_\gamma.X_{\alpha+\beta+\gamma}.v_{n,k,m}(n,m)=
	(m+1)v_{n+1,k+2,m-1}(n+1,m+1). \]
Hence
\[ v_{n+1,k+2,m-1}(n+1,m+1)=
	-\frac1{m+1}Y_\gamma.X_{\alpha+\beta+\gamma}.v_{n,k,m}(n,m)= \]
\[ -\frac1{H_\gamma(v_{n,k,m}(n,m))+1}Y_\gamma.A_{\alpha+\beta+\gamma}
	v_{n,k,m}(n,m) \]
and \eqref{abt} transforms to
\[ X_{\alpha+\beta}.v_{n,k,m}(n,m)=v_{n+1,k+2,m-1}(n+1,m-1)- \]
\[ -\frac1{H_\gamma(v_{n,k,m}(n,m))+1}Y_\gamma.A_{\alpha+\beta+\gamma}
	v_{n,k,m}(n,m) \]
One should notice that in that formula all vectors are highest weight vectors.
Hence, the operator that preserves highest weight vectors have the form
\[ X_{\alpha+\beta}+Y_\gamma X_{\alpha+\beta+\gamma}\frac1{H_\gamma+1} \]
where the fraction $\frac1{H_\gamma+1}$ means the multiplication by the value
$\frac1{m+1}$. It is more convenient and more natural to write down that
formula if we multiply it by $m+1$. It produces a definition
\begin{equation*}
	A_{\alpha+\beta}=
	X_{\alpha+\beta}(H_\gamma+1)+Y_\gamma X_{\alpha+\beta+\gamma}
\end{equation*}
It acts on the highest value $v_{n,k,m}(n,m)$ and, now, the terminology is
unambiguous. The action of $H_\gamma+1$ on $v_{n,k,m}(n,m)$ is
$(m+1)v_{n,k,m}(n,m)$ and $X_{\alpha+\beta}$ acts on it.

We can proceed, since the calculations are the same. Hence, we omit the full
proof, as the details are straightforward.

\begin{definition}\label{def1}
	The following operators acting on the highest weight vectors produce
	again the highest weight vectors
	\begin{enumerate}
		\item $A_{\alpha+\beta+\gamma}=X_{\alpha+\beta+\gamma}$
		\item $A_{\alpha+\beta}=X_{\alpha+\beta}(H_\gamma+1)
			+Y_\gamma X_{\alpha+\beta+\gamma}$
		\item $A_{\beta+\gamma}=X_{\beta+\gamma}(H_\alpha+1)
			-Y_\alpha X_{\alpha+\beta+\gamma}$
		\item $A_{\beta}=X_{\beta}(H_\alpha+1)(H_\gamma+1)
			-Y_\alpha X_{\alpha+\beta}(H_\gamma+1)\\
			\hspace*{10mm}+Y_\gamma X_{\beta+\gamma}(H_\alpha+1)
			-Y_\alpha Y_\gamma X_{\alpha+\beta+\gamma}$
		\item $B_{\alpha+\beta+\gamma}=Y_{\alpha+\beta+\gamma}(H_\alpha+1)
			(H_\gamma+1)+Y_\alpha Y_{\beta+\gamma}(H_\gamma+1)\\
			\hspace*{18mm}-Y_\gamma Y_{\alpha+\beta}(H_\alpha+1)
			-Y_\alpha Y_\gamma Y_\beta$
		\item $B_{\alpha+\beta}=Y_{\alpha+\beta}(H_\alpha+1)+Y_\alpha Y_\beta$
		\item $B_{\beta+\gamma}=Y_{\beta+\gamma}(H_\gamma+1)-Y_\gamma Y_\beta$
		\item $B_\beta=Y_\beta$
	\end{enumerate}
\end{definition}

\begin{remark}
	Expression for $A_{\beta+\gamma}$ contains minus sign and the expression for
	$A_{\alpha+\beta}$ contains the plus sign. The reason is very simple.
	The minus sign comes from the formula
	$[X_\alpha,X_{\beta+\gamma}]=X_{\alpha+\beta+\gamma}$.
	The plus sign comes from the formula
	$[X_\gamma,X_{\alpha+\beta}]=-X_{\alpha+\beta+\gamma}$.
\end{remark}

\begin{remark}
	Formulas for $A$ and $B$ are not symmetric since we always
	calculate the highest weight vector. If we calculate the
	lowest weight vectors in formulas for $B$, then expressions would
	be more symmetric.
\end{remark}

\begin{remark}\label{remhigh}
	One can prove that these expressions acting on the highest weight vectors
	produce again highest weight vectors by calculating formulas $[A,X_\alpha]$,
	$[A,X_\gamma]$, $[A,X_\alpha]$ and $[A,X_\gamma]$. The result is an
	operator that gives 0 when acting on highest weight vectors. For example,
	\[ [A_\beta,X_\alpha]=-X_{\alpha+\beta}(H_\alpha+1)(H_\gamma+1)+
		2X_\beta X_\alpha(H_\gamma+1)+
		H_\alpha X_{\alpha+\beta}(H_\gamma+1)- \]
	\[ Y_\gamma X_{\alpha+\beta+\gamma}(H_\alpha+1)+
		2Y_\gamma X_{\beta+\gamma}X_\alpha+
		H_\alpha Y_\gamma X_{\alpha+\beta+\gamma}= \]
	\[ 2(X_\beta X_\alpha(H_\gamma+1)+Y_\gamma X_{\beta+\gamma})
		X_\alpha \]
	and it produces 0 acting on the highest weight vector since the action of
	$X_\alpha$ on the highest weight vector of K module produces 0.
\end{remark}

We want to work with operators $A_*$ and $B_*$. However, we have to be able to
come back to our operators $X_*$ and $Y_*$.

\begin{proposition}\label{xyizab}
	Operators $X_*$ and $Y_*$ can be obtained from operators $A_*$ and $B_*$ by
	the following list of formulas,
	\begin{enumerate}
		\item $X_{\alpha+\beta+\gamma}=A_{\alpha+\beta+\gamma}$
		\item $X_{\alpha+\beta}=A_{\alpha+\beta}\frac1{H_\gamma+1}
			-Y_\gamma A_{\alpha+\beta+\gamma}\frac1{H_\gamma+1}$
		\item $X_{\beta+\gamma}=A_{\beta+\gamma}\frac1{H_\alpha+1}
			+Y_\alpha A_{\alpha+\beta+\gamma}\frac1{H_\alpha+1}$
		\item $X_{\beta}=A_{\beta}\frac1{(H_\alpha+1)(H_\gamma+1)}
			+Y_\alpha A_{\alpha+\beta}\frac1{(H_\alpha+1)(H_\gamma+1)}\\
			\hspace*{11mm}-Y_\gamma A_{\beta+\gamma}\frac1{(H_\alpha+1)
			(H_\gamma+1)}-Y_\alpha Y_\gamma A_{\alpha+\beta+\gamma}
			\frac1{(H_\alpha+1)(H_\gamma+1)}$
		\item $Y_{\alpha+\beta+\gamma}=B_{\alpha+\beta+\gamma}
			\frac1{(H_\alpha+1)(H_\gamma+1)}
			-Y_\alpha B_{\beta+\gamma}\frac1{(H_\alpha+1)(H_\gamma+1)}\\
			\hspace*{17mm}+Y_\gamma B_{\alpha+\beta}
			\frac1{(H_\alpha+1)(H_\gamma+1)}
			-Y_\alpha Y_\gamma B_\beta\frac1{(H_\alpha+1)(H_\gamma+1)}$
		\item $Y_{\alpha+\beta}=B_{\alpha+\beta}\frac1{H_\alpha+1}
			-Y_\alpha B_\beta\frac1{H_\alpha+1}$
		\item $Y_{\beta+\gamma}=B_{\beta+\gamma}\frac1{H_\gamma+1}
			+Y_\gamma B_\beta\frac1{H_\gamma+1}$
		\item $Y_\beta=B_\beta$
	\end{enumerate}
\end{proposition}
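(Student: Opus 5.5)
The plan is to invert Definition \ref{def1} by triangular back-substitution. Each $A_\delta$ is $X_\delta$ times an invertible factor, plus terms involving only $X_{\delta'}$ with $\delta'$ ``higher'' than $\delta$, namely $\delta'=\delta+\alpha$, $\delta+\gamma$ or $\delta+\alpha+\gamma$. Likewise each $B_\delta$ is $Y_\delta$ times an invertible factor, plus terms involving only $Y_{\delta'}$ with $\delta'$ lower in the $Y$-ordering. Throughout, all operators act on highest weight vectors. There $H_\alpha+1$ and $H_\gamma+1$ act by the nonzero scalars $n+1$ and $m+1$, so $\frac1{H_\alpha+1}$ and $\frac1{H_\gamma+1}$ are meaningful, exactly as in the derivation before Definition \ref{def1}. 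These factors always stand at the right, so they act first on $v_{n,k,m}(n,m)$ and give plain scalars. Moreover $H_\alpha+1$ and $H_\gamma+1$ commute with each other.

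For the $X$-part I would proceed in the order $\alpha+\beta+\gamma$, $\alpha+\beta$, $\beta+\gamma$, $\beta$.
\begin{enumerate}
\item Item 1 is the definition.
\item For $X_{\alpha+\beta}$, multiply $A_{\alpha+\beta}=X_{\alpha+\beta}(H_\gamma+1)+Y_\gamma X_{\alpha+\beta+\gamma}$ on the right by $\frac1{H_\gamma+1}$ and substitute $X_{\alpha+\beta+\gamma}=A_{\alpha+\beta+\gamma}$. This gives item 2.
\item Item 3 follows from $A_{\beta+\gamma}$ in the same way.
\item For $X_\beta$, solve item 4 of Definition \ref{def1} for $X_\beta(H_\alpha+1)(H_\gamma+1)$. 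Then replace $X_{\alpha+\beta}(H_\gamma+1)$ by $A_{\alpha+\beta}-Y_\gamma A_{\alpha+\beta+\gamma}$, and $X_{\beta+\gamma}(H_\alpha+1)$ by $A_{\beta+\gamma}+Y_\alpha A_{\alpha+\beta+\gamma}$.
\end{enumerate}
In step 4 the $Y_\alpha Y_\gamma A_{\alpha+\beta+\gamma}$ contributions combine. We get $+Y_\alpha(A_{\alpha+\beta}-Y_\gamma A_{\alpha+\beta+\gamma})-Y_\gamma(A_{\beta+\gamma}+Y_\alpha A_{\alpha+\beta+\gamma})+Y_\alpha Y_\gamma A_{\alpha+\beta+\gamma}$. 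Since $[Y_\alpha,Y_\gamma]=0$, this collapses to $Y_\alpha A_{\alpha+\beta}-Y_\gamma A_{\beta+\gamma}-Y_\alpha Y_\gamma A_{\alpha+\beta+\gamma}$. Finally, multiply on the right by $\frac1{(H_\alpha+1)(H_\gamma+1)}$.

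One subtlety needs checking here: $X_{\alpha+\beta}$ appears next to the factor $(H_\gamma+1)$ in the formula for $A_\beta$. Because $X_{\alpha+\beta}$ is always evaluated directly on a highest weight vector, the substitution is legitimate in the form $X_{\alpha+\beta}(H_\gamma+1)=A_{\alpha+\beta}-Y_\gamma A_{\alpha+\beta+\gamma}$. This identity holds on highest weight vectors, where $(H_\gamma+1)$ acts first. It does not require commuting $H$ past $X$.

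The $Y$-part is symmetric, processed in the order $\beta$, $\alpha+\beta$, $\beta+\gamma$, $\alpha+\beta+\gamma$.
\begin{enumerate}
\item Item 8 is the definition.
\item Items 6 and 7 follow by dividing $B_{\alpha+\beta}$ and $B_{\beta+\gamma}$ on the right by $H_\alpha+1$ and $H_\gamma+1$, respectively, and inserting $Y_\beta=B_\beta$.
\item For item 5, solve $B_{\alpha+\beta+\gamma}$ for $Y_{\alpha+\beta+\gamma}(H_\alpha+1)(H_\gamma+1)$. Then substitute $Y_{\beta+\gamma}(H_\gamma+1)=B_{\beta+\gamma}+Y_\gamma B_\beta$ and $Y_{\alpha+\beta}(H_\alpha+1)=B_{\alpha+\beta}-Y_\alpha B_\beta$.
\end{enumerate}
The $Y_\alpha Y_\gamma B_\beta$ terms again combine, to $-Y_\alpha Y_\gamma B_\beta-Y_\gamma Y_\alpha B_\beta+Y_\alpha Y_\gamma B_\beta=-Y_\alpha Y_\gamma B_\beta$.

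The main obstacle is this sign and term bookkeeping in items 4 and 5. It depends on the commutativity of $Y_\alpha$ and $Y_\gamma$, which holds because $\alpha$ and $\gamma$ are orthogonal and $\alpha+\gamma$ is not a root. It also depends on the commutativity of $H_\alpha+1$ and $H_\gamma+1$. I would verify these sign collapses carefully, and cross-check one case, say item 4, by applying both sides to $v_{n,k,m}(n,m)$ and using \eqref{abc} and \eqref{abt}.
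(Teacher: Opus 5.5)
Your proposal is correct and takes the same approach as the paper, whose proof says only that the formulas ``easily follow from Definition~\ref{def1}''. Your triangular back-substitution writes that inversion out in full: the $Y_\alpha Y_\gamma$ terms in items 4 and 5 collapse via $[Y_\alpha,Y_\gamma]=0$, and the factors $\frac1{H_\alpha+1}$, $\frac1{H_\gamma+1}$ are read as nonzero scalars on highest weight vectors, which is exactly how the paper uses the statement.
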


\begin{proof}
	All formulas easily follow from the Definition \eqref{def1}.
\end{proof}

\begin{remark}
	The fraction $\frac1{H_\alpha+1}$ simply means multiplication by
	$\frac1{n+1}$ where $n$ satisfies $H_\alpha v=nv$ where $v$ is the highest
	weight vector of some $K$ module.
\end{remark}

\begin{remark}
	Operators $X_*$ and $Y_*$ are described by $A_*$, $B_*$, $Y_\alpha$ and
	$Y_\gamma$. One should notice that $\beta$ is a noncompact root.
\end{remark}

We need two more operators acting on highest weight vectors of $K$ modules.

\begin{definition}
	Let
	\begin{equation*}
		Q=X_{\alpha+\beta}X_{\beta+\gamma}-X_\beta X_{\alpha+\beta+\gamma}
	\end{equation*}
	and
	\begin{equation*}
		R=Y_{\alpha+\beta}Y_{\beta+\gamma}-Y_\beta Y_{\alpha+\beta+\gamma}
	\end{equation*}
	be operators on the set of highest weight vectors of $K$ modules.
\end{definition}

\subsection{\texorpdfstring{Properties of operators $A_*$, $B_*$, $Q$ and $R$}
	{Properties of operators A, B, Q and R}}

At first glance, it looks artificial. However, it comes very naturally.

\begin{proposition}\label{commab}
	Our operators satisfy following formulas,
	\begin{equation}\label{comma}
		[A_{\alpha+\beta+\gamma},A_\beta]=Q(H_\alpha+H_\gamma+2)
	\end{equation}
	\begin{equation*}
		[A_{\alpha+\beta},A_{\beta+\gamma}]=Q(H_\alpha-H_\gamma)
	\end{equation*}
	\begin{equation}\label{commb}
		[B_\beta,B_{\alpha+\beta+\gamma}]=R(H_\alpha+H_\gamma+2)
	\end{equation}
	\begin{equation*}
		[B_{\alpha+\beta},B_{\beta+\gamma}]=R(H_\gamma-H_\alpha)
	\end{equation*}
\end{proposition}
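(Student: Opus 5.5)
These are identities of operators on highest weight vectors, so the plan is to expand each side in $U(\g)$ using Definition~\ref{def1}. We then reduce modulo the left ideal generated by $X_\alpha$ and $X_\gamma$, since these kill highest weight vectors. We must remember that $H_\alpha$ and $H_\gamma$ act on the right, i.e. on the input vector. When $H_\alpha$ or $H_\gamma$ is moved past a root vector $Z$ of weight $\mu$ we use $H Z = Z(H+\mu(H))$. The relevant values of $\mu(H)$ are: $\alpha(H_\alpha)=2$, $\beta(H_\alpha)=-1$, $\gamma(H_\alpha)=0$, and symmetrically for $H_\gamma$.

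For \eqref{comma}, $A_{\alpha+\beta+\gamma}=X_{\alpha+\beta+\gamma}$ commutes with every $X_*$, because the positive roots involved cannot sum to a root. Hence in
\[
[X_{\alpha+\beta+\gamma},A_\beta]
\]
only commutators with $Y_\alpha$, $Y_\gamma$ and with the $H$-factors contribute. These are:
\[
[X_{\alpha+\beta+\gamma},Y_\alpha]=\pm X_{\beta+\gamma},\qquad [X_{\alpha+\beta+\gamma},Y_\gamma]=\pm X_{\alpha+\beta},
\]
\[
[X_{\alpha+\beta+\gamma},H_\alpha+1]=-X_{\alpha+\beta+\gamma},\qquad [X_{\alpha+\beta+\gamma},H_\gamma+1]=-X_{\alpha+\beta+\gamma}.
\]
The signs are fixed by the matrix conventions of \eqref{rootvec}, consistently with the earlier ones such as $[X_\gamma,X_{\alpha+\beta}]=-X_{\alpha+\beta+\gamma}$. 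The term $Y_\alpha Y_\gamma X_{\alpha+\beta+\gamma}$ gives terms of the form $X_{\beta+\gamma}Y_\gamma X_{\alpha+\beta+\gamma}$, and there we commute $Y_\gamma$ further to the left, producing $[X_{\beta+\gamma},Y_\gamma]=\pm X_\beta$. After collecting terms, everything should assemble into
\[
(X_{\alpha+\beta}X_{\beta+\gamma}-X_\beta X_{\alpha+\beta+\gamma})\cdot(\text{linear in }H_\alpha,H_\gamma)
\]
plus terms ending in $X_\alpha$ or $X_\gamma$, in the same way as in Remark~\ref{remhigh}. The linear factor should come out as $H_\alpha+H_\gamma+2$: the constant $2$ arises from the shifts $-1$ from each of the two $H$-factors, combined with the $+1$'s in $(H_\alpha+1)(H_\gamma+1)$.

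For $[A_{\alpha+\beta},A_{\beta+\gamma}]$ we expand the four products. The term $[X_{\alpha+\beta}(H_\gamma+1),X_{\beta+\gamma}(H_\alpha+1)]$ gives
\[
X_{\alpha+\beta}X_{\beta+\gamma}\bigl((H_\gamma+1)(H_\alpha+1)-(H_\alpha+1)(H_\gamma+1)\bigr)
\]
plus shift terms, because $X_{\alpha+\beta}$ and $X_{\beta+\gamma}$ commute. The cross terms with $Y_\gamma X_{\alpha+\beta+\gamma}$ and $Y_\alpha X_{\alpha+\beta+\gamma}$ produce $X_\beta X_{\alpha+\beta+\gamma}$ through $[X_{\beta+\gamma},Y_\gamma]$ and $[X_{\alpha+\beta},Y_\alpha]$. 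The term pairing $Y_\gamma X_{\alpha+\beta+\gamma}$ with $Y_\alpha X_{\alpha+\beta+\gamma}$ vanishes modulo the ideal, since $[Y_\alpha,Y_\gamma]=0$ and $X_{\alpha+\beta+\gamma}$ commutes with both up to the terms above. We then check that the coefficients combine to $Q(H_\alpha-H_\gamma)$. The antisymmetry under $\alpha\leftrightarrow\gamma$, together with the opposite sign conventions noted in the Remark after Definition~\ref{def1}, explains the difference $H_\alpha-H_\gamma$.

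The $B$ identities \eqref{commb} are handled the same way, with $B_\beta=Y_\beta$ playing the role of $A_{\alpha+\beta+\gamma}$. Alternatively, one can apply the anti-automorphism $X_\delta\mapsto Y_\delta$ (transpose), which interchanges the $X$'s and $Y$'s. However, the $B$'s are defined with the $Y_\alpha,Y_\gamma$ placed on the left and the $H$'s on the right rather than in mirrored positions, so a direct recomputation is safer. This recomputation is the reason the second formula carries the sign $H_\gamma-H_\alpha$.

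The main obstacle is bookkeeping: keeping the structure-constant signs of $\esl(4,\C)$ consistent and correctly tracking the $H$-shifts when moving $H$'s to the right. I would first compute the full commutator table $[X_\delta,Y_\alpha]$, $[X_\delta,Y_\gamma]$, $[Y_\delta,Y_\alpha]$ for the noncompact $\delta$ directly from matrix units $E_{ij}$. I would then verify each identity first on a single test case, such as the scalar coefficient of $X_\beta X_{\alpha+\beta+\gamma}$, before doing the full reduction.
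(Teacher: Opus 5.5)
Your strategy (expand both sides in $U(\mathfrak g)$ via Definition~\ref{def1}, move the $H$'s to the right with the correct shifts, and collect terms) is the paper's own. Your sketch for $[A_{\alpha+\beta+\gamma},A_\beta]$ is sound. The signs you leave open are $[X_{\alpha+\beta+\gamma},Y_\alpha]=-X_{\beta+\gamma}$, $[X_{\alpha+\beta+\gamma},Y_\gamma]=X_{\alpha+\beta}$ and $[X_{\beta+\gamma},Y_\gamma]=X_\beta$. Note also that the $H$-factors give $-X_\beta X_{\alpha+\beta+\gamma}(H_\alpha+H_\gamma+3)$, and it is the $Y_\alpha Y_\gamma X_{\alpha+\beta+\gamma}$ term that supplies the compensating $+X_\beta X_{\alpha+\beta+\gamma}$.

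The genuine problem is in $[A_{\alpha+\beta},A_{\beta+\gamma}]$. You claim that the term pairing $Y_\gamma X_{\alpha+\beta+\gamma}$ with $Y_\alpha X_{\alpha+\beta+\gamma}$ vanishes modulo $U(\mathfrak g)X_\alpha+U(\mathfrak g)X_\gamma$, and this is false. By Corollary~\ref{corcomm}, using $[Y_\alpha,Y_\gamma]=0$, $[X_{\alpha+\beta+\gamma},Y_\alpha]=-X_{\beta+\gamma}$ and $[Y_\gamma,X_{\alpha+\beta+\gamma}]=-X_{\alpha+\beta}$,
\[
[Y_\gamma X_{\alpha+\beta+\gamma},-Y_\alpha X_{\alpha+\beta+\gamma}]
=Y_\gamma X_{\beta+\gamma}X_{\alpha+\beta+\gamma}+Y_\alpha X_{\alpha+\beta}X_{\alpha+\beta+\gamma}.
\]
These are PBW monomials with no factor $X_\alpha$ or $X_\gamma$, so they lie outside that left ideal and cannot be discarded.

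They are in fact exactly what is needed. The two mixed terms give
\[
[X_{\alpha+\beta}(H_\gamma+1),-Y_\alpha X_{\alpha+\beta+\gamma}]=X_\beta X_{\alpha+\beta+\gamma}(H_\gamma+2)-Y_\alpha X_{\alpha+\beta}X_{\alpha+\beta+\gamma},
\]
\[
[Y_\gamma X_{\alpha+\beta+\gamma},X_{\beta+\gamma}(H_\alpha+1)]=-X_\beta X_{\alpha+\beta+\gamma}(H_\alpha+2)-Y_\gamma X_{\beta+\gamma}X_{\alpha+\beta+\gamma}.
\]
The residual $YXX$ pieces come from the $H$-shift (e.g.\ $H_\gamma+1$ moved past $X_{\alpha+\beta+\gamma}$ becomes $H_\gamma+2$), which your sketch does not mention. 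Together with the leading term $X_{\alpha+\beta}X_{\beta+\gamma}(H_\alpha-H_\gamma)$, these residues cancel only against the product term above. If you drop it, you are left with $Q(H_\alpha-H_\gamma)-Y_\alpha X_{\alpha+\beta}X_{\alpha+\beta+\gamma}-Y_\gamma X_{\beta+\gamma}X_{\alpha+\beta+\gamma}$, and the identity does not close.

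The same applies to $[B_{\alpha+\beta},B_{\beta+\gamma}]$. There $-[Y_\alpha Y_\beta,Y_\gamma Y_\beta]=Y_\alpha Y_{\beta+\gamma}Y_\beta+Y_\gamma Y_{\alpha+\beta}Y_\beta$ is precisely what cancels the residues $-Y_\gamma Y_{\alpha+\beta}Y_\beta$ and $-Y_\alpha Y_{\beta+\gamma}Y_\beta$ of the mixed terms.

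Once these product terms are kept, all four identities hold exactly in $U(\mathfrak g)$, so reduction modulo the left ideal is never needed. The sign $H_\gamma-H_\alpha$ simply comes from the leading term $Y_{\alpha+\beta}Y_{\beta+\gamma}\bigl((H_\alpha+2)(H_\gamma+1)-(H_\gamma+2)(H_\alpha+1)\bigr)$, because $B_{\alpha+\beta}$ carries $H_\alpha+1$ where $A_{\alpha+\beta}$ carries $H_\gamma+1$.
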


Before the proof, we need

\begin{corollary}\label{corcomm}
	Let $x$, $y$, $z$ and $t$ be some operators. Then
	\begin{equation}\label{comm}
		[xy,zt]=[x,z]yt+x[y,z]t+z[x,t]y+zx[y,t]
	\end{equation}
\end{corollary}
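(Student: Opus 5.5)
The identity \eqref{comm} is purely formal: it holds in any associative algebra, in particular in the algebra of operators on the highest weight vectors (or on $V$) generated by the elements of $U(\g)$ and the operators $\frac1{H_\alpha+1}$, $\frac1{H_\gamma+1}$. The plan is to expand $[xy,zt]$ with the Leibniz rule for commutators and then rearrange the result into the stated form.

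\emph{Step 1: expand with the Leibniz rule.} We use the Leibniz rule in each argument separately:
\[ [ab,w]=a[b,w]+[a,w]b, \qquad [w,ab]=[w,a]b+a[w,b]. \]
Applying it first in the left argument and then in the right argument gives
\[ [xy,zt]=x[y,zt]+[x,zt]y=x[y,z]t+xz[y,t]+[x,z]ty+z[x,t]y. \]
Each step here is a one-line expansion of products, so no difficulty is expected.

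\emph{Step 2: compare with the right-hand side of \eqref{comm}.} The terms $x[y,z]t$ and $z[x,t]y$ already appear in \eqref{comm}. The remaining terms differ only in their ordering. Subtracting the expression from Step 1 from the right-hand side of \eqref{comm} gives
\[ [x,z](yt-ty)+(zx-xz)[y,t]=[x,z][y,t]-[x,z][y,t]=0. \]
This proves \eqref{comm}.

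The only point needing care is this reordering: Leibniz naturally produces $[x,z]ty$ and $xz[y,t]$, not the forms $[x,z]yt$ and $zx[y,t]$ written in \eqref{comm}. The two correction terms cancel exactly, and this is the main (and essentially only) obstacle. Alternatively, one could simply expand both sides into the eight words in $x,y,z,t$ and compare them directly.
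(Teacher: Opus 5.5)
Your proof is correct and is essentially the paper's argument: both simply apply the Leibniz rule in each slot of the commutator. The only difference is the order of expansion. The paper expands the right slot first, $[xy,zt]=[xy,z]t+z[xy,t]$, and then the left slot, which gives the stated form $[x,z]yt+x[y,z]t+z[x,t]y+zx[y,t]$ directly, so the reordering and cancellation in your Step 2 (which you carried out correctly) is not needed.
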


\begin{proof}
	Since $[x,zt]=[x,z]t+z[x,t]$ and $[xy,z]=[x,z]y+x[y,z]$ it follows
	\[ [xy,zt]=[xy,z]t+z[xy,t]=[x,z]yt+x[y,z]t+z[x,t]y+zx[y,t] \]
	for all operators $x$, $y$, $z$ and $t$.
\end{proof}

\begin{proof}
	We only calculate $[A_{\alpha+\beta+\gamma},A_\beta]$ and.
	$[A_{\alpha+\beta},A_{\beta+\gamma}]$. Hence,
	\[ [A_{\alpha+\beta+\gamma},A_\beta]=[X_{\alpha+\beta+\gamma}, \]
	\[ [X_{\beta}(H_\alpha+1)(H_\gamma+1)-Y_\alpha X_{\alpha+\beta}(H_\gamma+1)
		+Y_\gamma X_{\beta+\gamma}(H_\alpha+1)
		-Y_\alpha Y_\gamma X_{\alpha+\beta+\gamma}]= \]
	\[ =-X_{\beta}X_{\alpha+\beta+\gamma}(H_\gamma+1)
		-X_{\beta}(H_\alpha+1)X_{\alpha+\beta+\gamma}+ \]
	\[ +X_{\beta+\gamma}X_{\alpha+\beta}(H_\gamma+1)
		+Y_\alpha X_{\alpha+\beta}X_{\alpha+\beta+\gamma}+ \]
	\[ +X_{\alpha+\beta}X_{\beta+\gamma}(H_\alpha+1)
		-Y_\gamma X_{\beta+\gamma}X_{\alpha+\beta+\gamma}+ \]
	\[ +X_{\beta+\gamma}Y_\gamma X_{\alpha+\beta+\gamma}
		-Y_\alpha X_{\alpha+\beta}X_{\alpha+\beta+\gamma}= \]
	\[ -X_{\beta}X_{\alpha+\beta+\gamma}(H_\alpha+H_\gamma+3)
		+X_{\alpha+\beta}X_{\beta+\gamma}(H_\alpha+H_\gamma+2)
		-X_{\beta}X_{\alpha+\beta+\gamma}= \]
	\[ =Q(H_\alpha+H_\gamma+2) \]
	For the second formula we use the formula \eqref{comm},
	\[ [A_{\alpha+\beta},A_{\beta+\gamma}]= \]
	\[ [X_{\alpha+\beta}(H_\gamma+1)+Y_\gamma X_{\alpha+\beta+\gamma},
		X_{\beta+\gamma}(H_\alpha+1)-Y_\alpha X_{\alpha+\beta+\gamma}]= \]
	\[ =X_{\alpha+\beta}X_{\beta+\gamma}(H_\alpha+1)
		-X_{\beta+\gamma}X_{\alpha+\beta}(H_\gamma+1)- \]
	\[ -X_\beta X_{\alpha+\beta+\gamma}(H_\alpha+1)
		-X_{\beta+\gamma}Y_\gamma X_{\alpha+\beta+\gamma}+ \]
	\[ +X_\beta X_{\alpha+\beta+\gamma}(H_\gamma+1)
		-Y_\alpha X_{\alpha+\beta}X_{\alpha+\beta+\gamma}+ \]
	\[ +Y_\gamma X_{\beta+\gamma}X_{\alpha+\beta+\gamma}
		+Y_\alpha X_{\alpha+\beta}X_{\alpha+\beta+\gamma}= \]
	\[ =X_{\alpha+\beta}X_{\beta+\gamma}(H_\alpha-H_\gamma)
		-X_{\beta}X_{\alpha+\beta+\gamma}(H_\alpha-H_\gamma)=
		Q(H_\alpha-H_\gamma). \]
	The rest is the same.
\end{proof}

This proof indicates how the other identities can be obtained.
One should use the Definition \eqref{def1}, Formula \eqref{comm} and patiently
perform the calculation.

\begin{remark}
	We already saw in Remark \eqref{remhigh} that the certain operator takes
	highest weight vectors of $K$ modules to highest weight vectors if and only
	if it commutes with $X_\alpha$ and $X_\gamma$. Hence, let us calculate
	the commutator $[Q,X_\alpha]$,
	\[ [Q,X_\alpha]=[X_{\alpha+\beta}X_{\beta+\gamma}-
		X_\beta X_{\alpha+\beta+\gamma},X_\alpha]=
		-X_{\alpha+\beta}X_{\alpha+\beta+\gamma}
		+X_{\alpha+\beta}X_{\alpha+\beta+\gamma}=0. \]
	Similarly, $[Q,X_\gamma]=[R,X_\gamma]=[R,X_\gamma]=0$.
\end{remark}

Now, let us list formulas that our operators satisfy.

\begin{proposition}\label{commabqr}
	On $K$-highest vectors, operators $A_*$, $B_*$, $Q$ and $R$ satisfy
	\begin{enumerate}
		\item $[A_\beta,R]=B_{\alpha+\beta+\gamma}(1-H_\beta)$
		\item $[B_\beta,Q]=A_{\alpha+\beta+\gamma}(H_\beta-1)$
		\item $[B_{\alpha+\beta+\gamma},Q]=A_\beta(H_\alpha+H_\beta+H_\gamma+1)$
		\item $[A_{\alpha+\beta+\gamma},R]=
			-B_\beta(H_\alpha+H_\beta+H_\gamma+1)$
		\item $[B_{\alpha+\beta},Q]=-A_{\beta+\gamma}(H_\alpha+H_\beta)$
		\item $[B_{\beta+\gamma},Q]=-A_{\alpha+\beta}(H_\beta+H_\gamma)$
		\item $[A_{\alpha+\beta},R]=B_{\beta+\gamma}(H_\alpha+H_\beta)$
		\item $[A_{\beta+\gamma},R]=B_{\alpha+\beta}(H_\beta+H_\gamma)$
		\item $[A_\beta,Q]=0$
		\item $[A_{\alpha+\beta+\gamma},Q]=0$
		\item $[B_\beta,R]=0$
		\item $[B_{\alpha+\beta+\gamma},R]=0$
		\item $[A_{\alpha+\beta},Q]=0$
		\item $[A_{\beta+\gamma},Q]=0$
		\item $[B_{\alpha+\beta},R]=0$
		\item $[B_{\beta+\gamma},R]=0$
		\item $[A_{\alpha+\beta+\gamma},A_{\alpha+\beta}]=0$
		\item $[A_{\alpha+\beta+\gamma},A_{\beta+\gamma}]=0$
		\item $[A_\beta,A_{\alpha+\beta}]=0$
		\item $[A_\beta,A_{\beta+\gamma}]=0$
		\item $[B_{\alpha+\beta+\gamma},B_{\alpha+\beta}]=0$
		\item $[B_{\alpha+\beta+\gamma},B_{\beta+\gamma}]=0$
		\item $[B_\beta,B_{\alpha+\beta}]=0$
		\item $[B_\beta,B_{\beta+\gamma}]=0$
		\item $[A_{\alpha+\beta},B_{\beta+\gamma}]=0$
		\item $[A_{\beta+\gamma},B_{\alpha+\beta}]=0$
		\item $[A_\beta,B_{\alpha+\beta+\gamma}]=0$
	\end{enumerate}
\end{proposition}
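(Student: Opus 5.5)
The plan is to verify each of the 27 identities by direct computation in $U(\g)$, in the same spirit as the proof of Proposition \ref{commab}. The operators are substituted from Definition \ref{def1} and the definitions of $Q$ and $R$. Products of commutators are expanded with Corollary \ref{corcomm} and the rules $[x,zt]=[x,z]t+z[x,t]$, $[xy,z]=[x,z]y+x[y,z]$.

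Throughout, the expressions are read ``on $K$-highest vectors''. Two consequences are used systematically:
\begin{itemize}
\item $X_\alpha$ and $X_\gamma$ may be discarded whenever they stand at the right end of a word.
\item Every Cartan factor ($H_\alpha+1$, $H_\gamma+1$, $H_\beta$, \dots) is moved to the right, using $[H,X_\delta]=\delta(H)X_\delta$ and $[H,Y_\delta]=-\delta(H)Y_\delta$. The shift this produces is recorded as a constant change of the polynomial in the $H$'s.
\end{itemize}
After the identity has been reduced to an equality of words of the form (negative compact $Y$'s)(noncompact root vectors)(polynomial in $H$), both sides are compared term by term. By Remark \ref{remhigh}, both sides preserve highest weight vectors, so any leftover terms must end in $X_\alpha$ or $X_\gamma$.

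The identities are organised into groups, so that only representatives need to be computed and the rest follow by symmetry.

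\textbf{(a) Items 17--24.} These are the purely ``$A$ with $A$'' and ``$B$ with $B$'' commutators. They are easy because $\mathfrak p^+=\mathrm{span}\{X_\beta,X_{\alpha+\beta},X_{\beta+\gamma},X_{\alpha+\beta+\gamma}\}$ is abelian, and likewise $\mathfrak p^-$. Hence only commutators of root vectors with $Y_\alpha$, $Y_\gamma$ and with the $H$-factors survive. These cancel in the same way as in the computation of $[A_{\alpha+\beta+\gamma},A_\beta]$ above, except that here nothing is left over.

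\textbf{(b) Items 9, 10, 13, 14, 11, 12, 15, 16.} These follow from the same abelianness. $Q$ is quadratic in $\mathfrak p^+$ and commutes with $X_\alpha$ and $X_\gamma$. Commuting $Q$ past $Y_\alpha$ or $Y_\gamma$ gives, for example, $[Y_\alpha,Q]=-X_\beta X_{\beta+\gamma}+X_\beta X_{\beta+\gamma}=0$ (signs to be checked). So $Q$ commutes with every constituent of the $A$'s up to $H$-shifts, and these shifts cancel because $Q$ has weight zero for $H_\alpha$ and $H_\gamma$. The $B$/$R$ items are the same computation with $X$ and $Y$ interchanged.

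\textbf{(c) Items 25--27.} These mix $A$ and $B$, and they are the first place where brackets $[X_\delta,Y_{\delta'}]$ landing in $\kk$ appear. For $[A_\beta,B_{\alpha+\beta+\gamma}]$ the weights of the two operators differ by $2\beta+\alpha+\gamma$, which is not a root. The cross terms therefore pair into compact root vectors $X_\alpha$, $X_\gamma$ (which then act on the highest vector and vanish) or into $Y_\alpha$, $Y_\gamma$, which must cancel. I would carry this out explicitly for item 27 and obtain 25 and 26 from it via the symmetry described at the end.

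\textbf{(d) Items 1--8.} This is the main part and the expected obstacle. Take item 2, $[B_\beta,Q]$, first, since $B_\beta=Y_\beta$ and $Q$ is a short quadratic expression. Direct expansion gives
\[ [Y_\beta,X_{\alpha+\beta}X_{\beta+\gamma}-X_\beta X_{\alpha+\beta+\gamma}]. \]
Here $[Y_\beta,X_{\alpha+\beta}]$ is a multiple of $X_\alpha$, $[Y_\beta,X_{\beta+\gamma}]$ a multiple of $X_\gamma$, and $[Y_\beta,X_\beta]=-H_\beta$. Moving $X_\alpha$ and $X_\gamma$ to the right produces $X_{\alpha+\beta+\gamma}$ (through $[X_\alpha,X_{\beta+\gamma}]=X_{\alpha+\beta+\gamma}$) plus terms that vanish on highest vectors. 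Collecting then gives $X_{\alpha+\beta+\gamma}(H_\beta-1)$ after the shift $H_\beta X_{\alpha+\beta+\gamma}=X_{\alpha+\beta+\gamma}(H_\beta-1)$.

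Items 3, 5 and 6 are the heavy ones. There $B_{\alpha+\beta+\gamma}$ has four summands, so one gets about sixteen commutators, each producing $\kk$-terms that must be reordered. The care required is in the normal ordering: every stray $X_\alpha$ or $X_\gamma$ is pushed right, and the resulting terms are regrouped into the four summands of $A_\beta$ (respectively the two summands of $A_{\beta+\gamma}$). The target polynomial $H_\alpha+H_\beta+H_\gamma+1$ serves as a guide for what the coefficients must sum to.

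Finally, instead of redoing the $R$-identities, I would use the anti-automorphism $X_\delta\leftrightarrow Y_\delta$, $H\mapsto H$. Composed with the Chevalley-type involution that swaps $\alpha\leftrightarrow\gamma$, this exchanges the $A$'s and $B$'s, and $Q$ and $R$, up to signs and reordering of Cartan factors. It yields items 1, 4, 7 and 8 from 2, 3, 5 and 6, and relates 5 to 6. The sign bookkeeping under this symmetry, including the asymmetry noted in the remarks after Definition \ref{def1}, has to be checked separately, and is where I expect errors most likely to creep in.
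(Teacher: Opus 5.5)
Your overall plan is the paper's own. The paper's proof is direct substitution in $U(\g)$: items 9--27 hold identically, and items 1--8 use $X_\alpha v=X_\gamma v=0$. Your parts (a), (b) and your item-2 computation fit that.

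\textbf{The gap: the anti-automorphism step.} The closing symmetry step, which is supposed to deliver items 1, 4, 7, 8, fails. Items 1--8 hold only modulo the left ideal $L=U(\g)X_\alpha+U(\g)X_\gamma$; your own item-2 computation discards $-X_{\beta+\gamma}X_\alpha+X_{\alpha+\beta}X_\gamma$. Any anti-automorphism turns $L$ into a right ideal, so it does not carry statements about $K$-highest vectors to statements about $K$-highest vectors.

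Nor does your map exchange the $A$'s and $B$'s up to signs and Cartan factors. It reverses words, so the compact lowering operators on the left of each $A_\delta$ become compact raising operators on the right. For example, the transpose sends $A_{\alpha+\beta}$ to $(H_\gamma+1)Y_{\alpha+\beta}+Y_{\alpha+\beta+\gamma}X_\gamma$. On a highest vector $v$ this is just $Y_{\alpha+\beta}(H_\gamma+2)v$, which is not even a highest vector, since $X_\alpha Y_{\alpha+\beta}v=-Y_\beta v$. The $\alpha\leftrightarrow\gamma$ swap only relabels this. Concretely, transposing item 2 gives a true identity for $[X_\beta,R]$, not item 1.

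\textbf{Repair.} Use an automorphism that preserves $L$: $\sigma=\operatorname{Ad}(P)$, where $P$ is the permutation matrix of $(1\,3)(2\,4)$, exchanging the two diagonal blocks. It gives $X_\alpha\leftrightarrow X_\gamma$, $Y_\alpha\leftrightarrow Y_\gamma$, $H_\alpha\leftrightarrow H_\gamma$, $H_\beta\mapsto-(H_\alpha+H_\beta+H_\gamma)$, $X_\beta\leftrightarrow Y_{\alpha+\beta+\gamma}$, $X_{\alpha+\beta+\gamma}\leftrightarrow Y_\beta$, $X_{\alpha+\beta}\leftrightarrow Y_{\alpha+\beta}$ and $X_{\beta+\gamma}\leftrightarrow Y_{\beta+\gamma}$. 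Hence, exactly and with no signs, $A_\beta\leftrightarrow B_{\alpha+\beta+\gamma}$, $A_{\alpha+\beta+\gamma}\leftrightarrow B_\beta$, $A_{\alpha+\beta}\leftrightarrow B_{\alpha+\beta}$, $A_{\beta+\gamma}\leftrightarrow B_{\beta+\gamma}$ and $Q\leftrightarrow R$. So $\sigma$ carries item 3 to 1, 2 to 4, 5 to 7 and 6 to 8. It also carries 9, 10, 13, 14, 17, 18, 19, 20 to 12, 11, 15, 16, 23, 24, 21, 22. Otherwise you must compute 1, 4, 7, 8 directly.

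\textbf{Items 25--26.} These cannot be obtained from item 27 by any such symmetry. $\sigma$, the $\alpha\leftrightarrow\gamma$ diagram automorphism and adjoint-type maps all preserve the split of the noncompact roots into $\{\beta,\alpha+\beta+\gamma\}$ and $\{\alpha+\beta,\beta+\gamma\}$. The first pair shifts $n$ and $m$ in the same direction; the second shifts them oppositely. So item 25 needs its own computation. It is short and exact in $U(\g)$: the only surviving terms are multiples of $Y_\gamma X_\alpha$, with coefficients $-H_\gamma$, $H_\gamma+1$ and $-1$. Then $\sigma$ maps 25 to $-$26. For item 27, the relevant fact is $[X_\beta,Y_{\alpha+\beta+\gamma}]=0$, because $-(\alpha+\gamma)$ is not a root; the ``difference'' $\alpha+2\beta+\gamma$ you cite is not what matters.

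\textbf{Item 2 justification.} The shift you quote is wrong. Since $(\alpha+\beta+\gamma)(H_\beta)=0$, $H_\beta$ commutes with $X_{\alpha+\beta+\gamma}$. The $-1$ comes entirely from $-X_\alpha X_{\beta+\gamma}=-X_{\beta+\gamma}X_\alpha-X_{\alpha+\beta+\gamma}$. Counting both contributions as you describe would give $H_\beta-2$.

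\textbf{Items 3, 5, 6 are not heavy.} Your part (b) shows $Q$ commutes with $Y_\alpha,Y_\gamma,H_\alpha,H_\gamma$. Hence $[B_\delta,Q]$ only involves the four brackets $[Y_\delta,Q]$. Modulo $L$, which is stable under right multiplication by polynomials in the $H$'s, these are
\begin{gather*}
[Y_\beta,Q]\equiv X_{\alpha+\beta+\gamma}(H_\beta-1),\\
[Y_{\alpha+\beta},Q]\equiv -X_{\beta+\gamma}(H_\alpha+H_\beta)+Y_\alpha X_{\alpha+\beta+\gamma},\\
[Y_{\beta+\gamma},Q]\equiv -X_{\alpha+\beta}(H_\beta+H_\gamma)-Y_\gamma X_{\alpha+\beta+\gamma},\\
[Y_{\alpha+\beta+\gamma},Q]=X_\beta(H_\alpha+H_\beta+H_\gamma+1)-Y_\alpha X_{\alpha+\beta}+Y_\gamma X_{\beta+\gamma}.
\end{gather*}
Collecting terms then gives 3, 5, 6 in a few lines. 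For example, $[B_{\alpha+\beta},Q]=[Y_{\alpha+\beta},Q](H_\alpha+1)+Y_\alpha[Y_\beta,Q]\equiv-A_{\beta+\gamma}(H_\alpha+H_\beta)$.
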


\begin{proof}
	All relations follow by direct substitution of the definitions of the
	operators, using the standard commutation relations
	\[ [H,E_\rho]=\rho(H)E_\rho \]
	and the corresponding commutation relations between the root vectors.
	Relations 9--27 hold as identities in ($U(\mathfrak{sl}4)$. For relations
	1--8 we additionally use the fact that $v$ is a $K$-highest weight vector,
	so that
	\[ X_\alpha v=X_\gamma v=0, \]
	together with the prescribed Cartan eigenvalues. The computations are straightforward, although some of them are rather lengthy.
\end{proof}

\subsection{\texorpdfstring{Coefficients in $(\g,K)$ modules}
	{Coefficients in (g,K) modules}}\label{coef}

Analysis of $(\g,K)$ modules using our operators, naturally requires a basis
and this poses a problem. Construction of a basis of highest weight vectors can
be a very hard problem. Instead of this construction, we will analyze
composition of operators $A_\delta B_\delta$ and $B_\delta A_\delta$ for
$\delta\in\{\beta,\alpha+\beta,\beta+\gamma,\alpha+\beta+\gamma\}$.

Assume that a $K$-type occurs with multiplicity one, and let $v$ be its highest
weight vector. This assumption is crucial and is satisfied for weights on the
boundary of the set of highest weights. Then
\begin{equation}\label{coefdef}
	A_\delta B_\delta v=\lambda v
\end{equation}
and
\[ A_\delta B_\delta(\mu v)=\mu A_\delta B_\delta v=
	\mu\lambda v=\lambda(\mu v). \]
It shows that the composition uniquely defines $\lambda$ and it is
independent of the choice of the basis vector $v$.

\subsection{Parity classes}

\begin{proposition}\label{parity}
	Let
	\[ \Lambda=\{(n,k,m)\in\mathbb Z^3:n+k+m\equiv0\pmod2\}. \]
	The lattice generated by the formal $A$- and $B$-weight shifts has index
	four in $\Lambda$.  Its four cosets are
	\[ \Lambda_r=\{(n,k,m)\in\Lambda:2n+k\equiv r\pmod4\},\qquad r=0,1,2,3. \]
	Representatives are $(0,0,0)$, $(0,1,1)$, $(1,0,1)$ and $(1,1,0)$
	respectively.  This is a statement about the formal weight lattice; it does
	not assert that the corresponding operators are nonzero or act transitively
	on the set of $K$ types of a module.
\end{proposition}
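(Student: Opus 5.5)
The plan is to compute the weight shifts explicitly, identify the lattice $L$ they generate, and compare $L$ with $\Lambda$ using a homomorphism to $\Z/4$ together with an index count.

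\textbf{Step 1: the weight shifts.} The computation leading to \eqref{abc} shows that $A_{\alpha+\beta+\gamma}$ shifts the highest weight $(n,k,m)$ by $(1,2,1)$. The computation leading to \eqref{abt} shows that $A_{\alpha+\beta}$ shifts it by $(1,2,-1)$. The same $H_\alpha,H_\gamma,H_0$ eigenvalue computation applies to the remaining operators of Definition \ref{def1}, since $H_0=H_\alpha+2H_\beta+H_\gamma$ pairs to $2$ with every positive noncompact root. It gives $(-1,2,1)$ for $A_{\beta+\gamma}$ and $(-1,2,-1)$ for $A_\beta$. The operators $B_\delta$ shift by the negatives of these vectors, because $Y_\delta$ carries weight $-\delta$ and the compact correction terms in Definition \ref{def1} do not change the highest weight of the image. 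So $L$ is the subgroup of $\Z^3$ generated by the four vectors $(\pm1,2,\pm1)$.

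\textbf{Step 2: a simple basis of $L$.} Taking differences of the generators gives $(2,0,0)$ and $(0,0,2)$. Hence
\[ L=\Z(2,0,0)+\Z(0,0,2)+\Z(1,2,1). \]
These three vectors are linearly independent. The matrix they form is triangular after reordering, with determinant $\pm 8$, so $[\Z^3:L]=8$. The lattice $\Lambda$ is the kernel of $(n,k,m)\mapsto n+k+m \bmod 2$, which is surjective onto $\Z/2$, so $[\Z^3:\Lambda]=2$. Each generator of $L$ has even coordinate sum, so $L\subseteq\Lambda$, and therefore $[\Lambda:L]=4$.

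\textbf{Step 3: identifying the cosets.} Consider the homomorphism $\phi:\Lambda\to\Z/4$ given by $\phi(n,k,m)=2n+k \bmod 4$. On the basis of $L$ it takes the values $4$, $0$ and $2+2=4$, all $\equiv0$, so $L\subseteq\ker\phi$. The vectors $(0,0,0)$, $(0,1,1)$, $(1,0,1)$, $(1,1,0)$ lie in $\Lambda$, since each has even coordinate sum. Their images under $\phi$ are $0,1,2,3$, so $\phi$ is surjective and $[\Lambda:\ker\phi]=4$. Since $L\subseteq\ker\phi$ and both have index $4$ in $\Lambda$, we get $L=\ker\phi$. Consequently the cosets of $L$ in $\Lambda$ are exactly the fibres $\Lambda_r=\phi^{-1}(r)$, with the stated representatives.

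\textbf{Main obstacle.} The only point needing care is Step 1. One must be sure the $B_\delta$, which are built with $Y_\alpha$, $Y_\gamma$ and the factors $(H+1)$, really shift by exactly $-\delta$ on highest weight vectors. I would verify this by applying $H_\alpha,H_\gamma,H_0$ to each summand in Definition \ref{def1}. For example, in $B_{\alpha+\beta}=Y_{\alpha+\beta}(H_\alpha+1)+Y_\alpha Y_\beta$ both summands have total weight $-\alpha-\beta$, which restricts to the shift $(-1,-2,1)$. The remaining statement, that nothing is claimed about nonvanishing of the operators, needs no proof.
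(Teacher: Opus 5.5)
Your proof is correct and follows essentially the same route as the paper: you list the four shifts $(\pm1,2,\pm1)$ and their negatives, obtain index $8$ in $\Z^3$ from a determinant and hence index $4$ in $\Lambda$, and conclude using the invariant $2n+k \bmod 4$ and the four representatives. Your basis $(2,0,0),(0,0,2),(1,2,1)$ also makes explicit a point the paper passes over, namely that the fourth shift satisfies $(-1,2,-1)=-(1,2,1)+(1,2,-1)+(-1,2,1)$, so the determinant computes the index of the full shift lattice. Likewise, your observation that $L\subseteq\ker\phi$ with equal index forces $L=\ker\phi$ spells out the paper's final ``hence''.
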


\begin{proof} In $(n,k,m)$ order, the four $A$-shifts are
	\[ (1,2,1),\quad(1,2,-1),\quad(-1,2,1),\quad(-1,2,-1), \]
	and the $B$-shifts are their negatives.  Every shift preserves both
	$n+k+m$ modulo $2$ and $2n+k$ modulo $4$.  The determinant of the first
	three displayed shifts is $8$, so their lattice has index $8$ in
	$\mathbb Z^3$ and therefore index $4$ in the index-two lattice $\Lambda$.
	The four displayed representatives lie in $\Lambda$ and give residues
	$0,1,2,3$; hence the residue fibers are exactly the four cosets.
\end{proof}

\subsection{Commutators $[Q,R]$,
	$[B_{\alpha+\beta+\gamma},A_{\alpha+\beta+\gamma}]$ and $[B_\beta,A_\beta]$}

Let us calculate commutators $[Q,R]$,
$[B_{\alpha+\beta+\gamma},A_{\alpha+\beta+\gamma}]$ and $[B_\beta,A_\beta]$.

\begin{proposition}\label{qr}
	The commutator $[Q,R]$ is given by
	\begin{align*}
		[Q,R]&=2H_\beta(H_\alpha+H_\beta+H_\gamma+1)+H_\alpha H_\gamma+\\
			 &+Y_\beta X_\beta(H_\alpha+H_\beta+H_\gamma+2)
			 	+Y_{\alpha+\beta}X_{\alpha+\beta}(H_\beta+H_\gamma)+\\
			 &+Y_{\beta+\gamma}X_{\beta+\gamma}(H_\alpha+H_\beta)
			 	+Y_{\alpha+\beta+\gamma}X_{\alpha+\beta+\gamma}(H_\beta-2)-\\
			 &-Y_\alpha Y_\beta X_{\alpha+\beta}
			 	-Y_\alpha Y_{\beta+\gamma}X_{\alpha+\beta+\gamma}
			 	+Y_\gamma Y_\beta X_{\beta+\gamma}
			 	+Y_\gamma Y_{\alpha+\beta}X_{\alpha+\beta+\gamma}.
	\end{align*}
\end{proposition}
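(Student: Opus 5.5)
We compute $[Q,R]$ directly in $U(\esl(4,\C))$ and then normalize the result. Both $Q$ and $R$ are differences of two quadratic monomials in root vectors, so
\[ [Q,R]=[X_{\alpha+\beta}X_{\beta+\gamma},Y_{\alpha+\beta}Y_{\beta+\gamma}]
 -[X_{\alpha+\beta}X_{\beta+\gamma},Y_\beta Y_{\alpha+\beta+\gamma}]
 -[X_\beta X_{\alpha+\beta+\gamma},Y_{\alpha+\beta}Y_{\beta+\gamma}]
 +[X_\beta X_{\alpha+\beta+\gamma},Y_\beta Y_{\alpha+\beta+\gamma}]. \]
Each of these four terms is expanded by Corollary \ref{corcomm}, formula \eqref{comm}. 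This reduces everything to the brackets $[X_\rho,Y_\sigma]$ for noncompact $\rho,\sigma\in\{\beta,\alpha+\beta,\beta+\gamma,\alpha+\beta+\gamma\}$. These brackets are read off from the matrix realization \eqref{rootvec}: $X_\rho=E_{ij}$, $Y_\rho=E_{ji}$ and $[E_{ij},E_{kl}]=\delta_{jk}E_{il}-\delta_{li}E_{kj}$. The brackets that occur are:
\begin{itemize}
\item the coroots $H_\rho$ when $\rho=\sigma$, namely $H_{\alpha+\beta}=H_\alpha+H_\beta$, and so on;
\item compact root vectors $X_\alpha,Y_\alpha,X_\gamma,Y_\gamma$ when $\rho-\sigma$ is a compact root;
\item zero otherwise.
\end{itemize}
The sign conventions are those already fixed in the paper, e.g.\ $[X_\gamma,X_{\alpha+\beta}]=-X_{\alpha+\beta+\gamma}$ and $[X_\alpha,X_{\beta+\gamma}]=X_{\alpha+\beta+\gamma}$. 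We tabulate these brackets once, before starting the expansion.

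Next we put every resulting monomial into a normal order: $Y$'s on the left, then Cartan elements, then $X$'s on the right. Cartan elements are moved to the far right using $[H,E_\rho]=\rho(H)E_\rho$, which shifts their constants. After this reordering we group the terms into three kinds:
\begin{itemize}
\item purely Cartan terms, which should assemble into $2H_\beta(H_\alpha+H_\beta+H_\gamma+1)+H_\alpha H_\gamma$;
\item terms $Y_\rho X_\rho\cdot(\text{Cartan})$, one for each noncompact $\rho$;
\item mixed terms containing a compact $Y_\alpha$ or $Y_\gamma$ on the left.
\end{itemize}
The statement, like Proposition \ref{commabqr}, is meant as an operator on $K$-highest weight vectors. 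So any monomial ending in $X_\alpha$ or $X_\gamma$ is discarded, since it kills highest weight vectors. A compact $X_\alpha$ or $X_\gamma$ produced in the middle of a word is commuted to the right, where it either vanishes or produces a noncompact root vector. This is exactly how the terms $-Y_\alpha Y_\beta X_{\alpha+\beta}$, $+Y_\gamma Y_\beta X_{\beta+\gamma}$, etc.\ should arise.

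The main obstacle is bookkeeping rather than any idea. Roughly sixteen brackets each produce two to four terms, and the reordering generates further lower-order corrections. The Cartan coefficients, such as $(H_\alpha+H_\beta+H_\gamma+2)$ versus $(H_\beta-2)$, are sensitive to every sign and shift. To control this we plan three checks:
\begin{itemize}
\item Verify the result against the symmetry $\alpha\leftrightarrow\gamma$, which interchanges the $\alpha+\beta$ and $\beta+\gamma$ terms.
\item Check the scalar part by evaluating both sides on a concrete vector, for instance in the adjoint or standard representation, or on a vector annihilated by all $X_\rho$. On such a vector only the Cartan part survives, which pins down $2H_\beta(H_\alpha+H_\beta+H_\gamma+1)+H_\alpha H_\gamma$ independently.
\item Use the relations $[A_{\alpha+\beta+\gamma},R]$ and $[B_\beta,Q]$ from Proposition \ref{commabqr} as a consistency check on the coefficients of $Y_{\alpha+\beta+\gamma}X_{\alpha+\beta+\gamma}$ and $Y_\beta X_\beta$.
\end{itemize}
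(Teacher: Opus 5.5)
Your plan is correct and is essentially the paper's own proof. Like the paper, you expand the four brackets with Corollary~\ref{corcomm}, normal-order everything into the form $Y_*X_*H_*$, and drop words ending in $X_\alpha$ or $X_\gamma$ (such as $Y_{\alpha+\beta}X_\beta X_\alpha$), since the identity is meant on $K$-highest weight vectors; the sanity checks you list are extras the paper does not use, and one small point in your bookkeeping narrative is off: the mixed terms $-Y_\alpha Y_\beta X_{\alpha+\beta}$, $+Y_\gamma Y_\beta X_{\beta+\gamma}$, etc.\ do not come from pushing compact $X_\alpha, X_\gamma$ to the right (those only feed the $Y_\rho X_\rho$ and Cartan parts, e.g.\ via $X_\beta X_\gamma Y_{\beta+\gamma}$ and $X_\alpha X_{\beta+\gamma}Y_{\alpha+\beta+\gamma}$), but from moving the compact $Y_\alpha, Y_\gamma$ produced directly by the brackets $[X_\rho,Y_\sigma]$ to the left, as in $-Y_\beta X_{\alpha+\beta}Y_\alpha$ and $Y_\beta Y_\gamma X_{\beta+\gamma}$.
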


\begin{proof}
	The proof is a straightforward computation using Corollary~\ref{corcomm},
	\begin{equation*}
		[Q,R]=[X_{\alpha+\beta}X_{\beta+\gamma}-X_\beta X_{\alpha+\beta+\gamma},
			Y_{\alpha+\beta}Y_{\beta+\gamma}-Y_\beta Y_{\alpha+\beta+\gamma}]=
	\end{equation*}
	\begin{equation*}
		=(H_\alpha+H_\beta)X_{\beta+\gamma}Y_{\beta+\gamma}
		+Y_{\alpha+\beta}X_{\alpha+\beta}(H_\beta+H_\gamma)-
	\end{equation*}
	\begin{equation*}
		-Y_\alpha X_{\alpha+\beta+\gamma}Y_{\beta+\gamma}
		+X_\beta X_\gamma Y_{\beta+\gamma}
		+Y_{\alpha+\beta}Y_\gamma X_{\alpha+\beta+\gamma}
		-Y_{\alpha+\beta}X_\beta X_\alpha-
	\end{equation*}
	\begin{equation*}
		-X_\alpha X_{\beta+\gamma}Y_{\alpha+\beta+\gamma}
		+X_{\alpha+\beta}X_\gamma Y_{\alpha+\beta+\gamma}
		+Y_\beta Y_\gamma X_{\beta+\gamma}
		-Y_\beta X_{\alpha+\beta}Y_\alpha+
	\end{equation*}
	\begin{equation*}
		+H_\beta X_{\alpha+\beta+\gamma}Y_{\alpha+\beta+\gamma}
		+Y_\beta X_\beta(H_\alpha+H_\beta+H_\gamma)=
	\end{equation*}
	We want to write all terms in the form $Y_*X_*H_*$.
	One should notice that $H_\alpha X_{\beta+\gamma}Y_{\beta+\gamma}=
	X_{\beta+\gamma}Y_{\beta+\gamma}H_\alpha$ as well as several similar
	identities.
	Furthermore, the term $Y_{\alpha+\beta}X_\beta X_\alpha$ may be omitted,
	since the action on the highest weight vector is 0,
	\begin{equation*}
		=(Y_{\beta+\gamma}X_{\beta+\gamma}+H_\beta+H_\gamma)(H_\alpha+H_\beta)
		+Y_{\alpha+\beta}X_{\alpha+\beta}(H_\beta+H_\gamma)-
	\end{equation*}
	\begin{equation*}
		-Y_\alpha Y_{\beta+\gamma}X_{\alpha+\beta+\gamma}
		+(Y_\beta X_\beta+H_\beta)
		+(Y_\gamma Y_{\alpha+\beta}-Y_{\alpha+\beta+\gamma})
			X_{\alpha+\beta+\gamma}-
	\end{equation*}
	\begin{equation*}
		-(Y_{\alpha+\beta+\gamma}X_{\alpha+\beta+\gamma}
			-Y_{\beta+\gamma}X_{\beta+\gamma}+H_\alpha)
		+(Y_{\alpha+\beta}X_{\alpha+\beta}+H_\alpha+H_\beta)+
	\end{equation*}
	\begin{equation*}
		+(Y_\gamma Y_\beta X_{\beta+\gamma}-Y_{\beta+\gamma}X_{\beta+\gamma})
		-(-Y_\beta X_\beta+Y_\alpha Y_\beta X_{\alpha+\beta}
			+Y_{\alpha+\beta}X_{\alpha+\beta})+
	\end{equation*}
	\begin{equation*}
		+(Y_{\alpha+\beta+\gamma}X_{\alpha+\beta+\gamma}
			+H_\alpha+H_\beta+H_\gamma)H_\beta
		+Y_\beta X_\beta(H_\alpha+H_\beta+H_\gamma).
	\end{equation*}
	This completes the proof.
\end{proof}

\begin{proposition}\label{baabc}
	The commutator $[B_{\alpha+\beta+\gamma},A_{\alpha+\beta+\gamma}]$ is given
	by
	\begin{align*}
		[B_{\alpha+\beta+\gamma},A_{\alpha+\beta+\gamma}]&
		=-H_\alpha H_\gamma(H_\alpha+H_\beta+H_\gamma+2)-\\
		&-Y_\beta X_\beta+Y_{\alpha+\beta}X_{\alpha+\beta}(H_\alpha+1)+\\
		&+Y_{\beta+\gamma}X_{\beta+\gamma}(H_\gamma+1)
		+Y_{\alpha+\beta+\gamma}X_{\alpha+\beta+\gamma}(H_\alpha+H_\gamma+3)+\\
		&+Y_\alpha Y_\beta X_{\alpha+\beta}
		+Y_\alpha Y_{\beta+\gamma}X_{\alpha+\beta+\gamma}
		-Y_\gamma Y_\beta X_{\beta+\gamma}
		-Y_\gamma Y_{\alpha+\beta}X_{\alpha+\beta+\gamma}.
	\end{align*}
\end{proposition}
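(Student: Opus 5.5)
The plan is to compute $[B_{\alpha+\beta+\gamma},A_{\alpha+\beta+\gamma}]$ directly, as in the proof of Proposition~\ref{qr}. We have $A_{\alpha+\beta+\gamma}=X_{\alpha+\beta+\gamma}$, and $B_{\alpha+\beta+\gamma}$ is the four-term expression of Definition~\ref{def1}. So
\[ [B_{\alpha+\beta+\gamma},A_{\alpha+\beta+\gamma}]=-[X_{\alpha+\beta+\gamma},B_{\alpha+\beta+\gamma}] \]
splits into four commutators:
\[ [X_{\alpha+\beta+\gamma},Y_{\alpha+\beta+\gamma}(H_\alpha+1)(H_\gamma+1)],\quad [X_{\alpha+\beta+\gamma},Y_\alpha Y_{\beta+\gamma}(H_\gamma+1)], \]
\[ [X_{\alpha+\beta+\gamma},Y_\gamma Y_{\alpha+\beta}(H_\alpha+1)],\quad [X_{\alpha+\beta+\gamma},Y_\alpha Y_\gamma Y_\beta]. \]
Each is expanded by the Leibniz rule, or by Corollary~\ref{corcomm} for products.

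The brackets needed are the following:
\begin{itemize}
\item $[X_{\alpha+\beta+\gamma},H_\alpha]=-X_{\alpha+\beta+\gamma}$, and similarly for $H_\gamma$. This shifts $(H_\alpha+1)$ to $H_\alpha$ when $X_{\alpha+\beta+\gamma}$ is moved to the right.
\item $[X_{\alpha+\beta+\gamma},Y_{\alpha+\beta+\gamma}]=H_\alpha+H_\beta+H_\gamma$.
\item $[X_{\alpha+\beta+\gamma},Y_\alpha]=\pm X_{\beta+\gamma}$, $[X_{\alpha+\beta+\gamma},Y_\gamma]=\pm X_{\alpha+\beta}$, and $[X_{\alpha+\beta+\gamma},Y_{\beta+\gamma}]=\pm X_\alpha$, $[X_{\alpha+\beta+\gamma},Y_{\alpha+\beta}]=\pm X_\gamma$.
\item $[X_{\alpha+\beta+\gamma},Y_\beta]=0$.
\end{itemize}
The signs are read off from the matrix realization (\ref{rootvec}), consistently with the signs already used in the paper: $[X_\alpha,X_{\beta+\gamma}]=X_{\alpha+\beta+\gamma}$ and $[X_\gamma,X_{\alpha+\beta}]=-X_{\alpha+\beta+\gamma}$.

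Next, I would normal-order every resulting monomial into the form $Y_*\,X_*\,H_*$ (or $Y_*Y_*X_*$), as in Proposition~\ref{qr}. The commutators used are of the type $X_{\beta+\gamma}Y_{\beta+\gamma}=Y_{\beta+\gamma}X_{\beta+\gamma}+(H_\beta+H_\gamma)$ and $X_\alpha Y_{\beta+\gamma}$, $X_{\alpha+\beta}Y_\gamma$ reorderings, which generate $Y_{\alpha+\beta+\gamma}$ or $Y_\beta$ pieces. Two kinds of terms are discarded, since the identity is meant on $K$-highest weight vectors, as in Proposition~\ref{commabqr}:
\begin{itemize}
\item any monomial ending in $X_\alpha$ or $X_\gamma$;
\item the terms $Y_{\beta+\gamma}X_\alpha$ and $Y_{\alpha+\beta}X_\gamma$ coming from $[X_{\alpha+\beta+\gamma},Y_{\beta+\gamma}]$, after $X_\alpha$ has been moved to the right past $(H_\gamma+1)$. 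This move costs nothing, since $X_\alpha$ commutes with $H_\gamma$.
\end{itemize}
Finally, I would collect the pure Cartan part. It should assemble, using that $H$'s commute with each other, into
\[ -H_\alpha H_\gamma(H_\alpha+H_\beta+H_\gamma+2). \]
The quadratic $Y X$ parts should assemble into the coefficients $(H_\alpha+1)$, $(H_\gamma+1)$ and $(H_\alpha+H_\gamma+3)$, the last coming from the shift of the two $(H+1)$ factors past $X_{\alpha+\beta+\gamma}$.

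The main obstacle is the last term, $[X_{\alpha+\beta+\gamma},Y_\alpha Y_\gamma Y_\beta]$. It produces $X_{\beta+\gamma}Y_\gamma Y_\beta$ and $Y_\alpha X_{\alpha+\beta}Y_\beta$, and these require two further reorderings each to push $X$'s to the right. They generate $Y_\beta X_\beta$, $H$-terms, and the cubic terms $Y_\alpha Y_\beta X_{\alpha+\beta}$ and $Y_\gamma Y_\beta X_{\beta+\gamma}$. Sign bookkeeping there is error-prone. I would handle it by fixing all structure constants once from explicit $4\times4$ matrices $E_{ij}$, writing $X_{\alpha+\beta+\gamma}=E_{14}$, $Y_\beta=E_{32}$, and so on, and computing $[E_{ij},E_{kl}]=\delta_{jk}E_{il}-\delta_{li}E_{kj}$. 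As a cross-check, I would apply both sides to a highest weight vector of a one-dimensional model, or compare against the already verified expression for $[Q,R]$ in Proposition~\ref{qr}, whose cubic terms have the same shape with opposite signs.
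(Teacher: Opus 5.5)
Your plan is correct and is essentially the paper's own proof. Like the paper, you expand $B_{\alpha+\beta+\gamma}$ term by term against $X_{\alpha+\beta+\gamma}$, normal-order everything into $YXH$ form, and drop the terms ending in $X_\alpha$ or $X_\gamma$, so the identity holds only on $K$-highest weight vectors. There are two bookkeeping slips to fix when you carry it out. First, the discarded pieces are actually $Y_\alpha X_\alpha(H_\gamma+1)$ and $Y_\gamma X_\gamma(H_\alpha+1)$, together with $Y_\alpha X_\alpha$ and $Y_\gamma X_\gamma$ from the cubic term; they are not $Y_{\beta+\gamma}X_\alpha$ and $Y_{\alpha+\beta}X_\gamma$. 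Second, $[X_\alpha,Y_{\beta+\gamma}]=[X_{\alpha+\beta},Y_\gamma]=0$, so the reorderings you list there produce nothing.
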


\begin{proof}
	The proof is similar to the proof of the previous proposition,
	\begin{equation*}
		[B_{\alpha+\beta+\gamma},A_{\alpha+\beta+\gamma}]=
	\end{equation*}
	\begin{equation*}
		=[Y_{\alpha+\beta+\gamma}(H_\alpha+1)(H_\gamma+1)
		+Y_\alpha Y_{\beta+\gamma}(H_\gamma+1)
		-Y_\gamma Y_{\alpha+\beta}(H_\alpha+1)-Y_\alpha Y_\gamma Y_\beta,
		X_{\alpha+\beta+\gamma}]=
	\end{equation*}
	\begin{equation*}
		=(H_\alpha+H_\beta+H_\gamma)(H_\alpha+1)(H_\gamma+1)
		+Y_{\alpha+\beta+\gamma}X_{\alpha+\beta+\gamma}(H_\gamma+1)
		+Y_{\alpha+\beta+\gamma}(H_\alpha+1)X_{\alpha+\beta+\gamma}+
	\end{equation*}
	\begin{equation*}
		+X_{\beta+\gamma}Y_{\beta+\gamma}(H_\gamma+1)
		+Y_\alpha Y_{\beta+\gamma}X_{\alpha+\beta+\gamma}+
	\end{equation*}
	\begin{equation*}
		+X_{\alpha+\beta}Y_{\alpha+\beta}(H_\alpha+1)
		-Y_\gamma Y_{\alpha+\beta}X_{\alpha+\beta+\gamma}-
	\end{equation*}
	\begin{equation*}
		-X_{\beta+\gamma}Y_\gamma Y_\beta
		+Y_\alpha X_{\alpha+\beta}Y_\beta=
	\end{equation*}
	\begin{equation*}
		=-(H_\alpha+H_\beta+H_\gamma)(H_\alpha+1)(H_\gamma+1)
		+Y_{\alpha+\beta+\gamma}X_{\alpha+\beta+\gamma}(H_\alpha+H_\gamma+3)+
	\end{equation*}
	\begin{equation*}
		+(Y_{\beta+\gamma}X_{\beta+\gamma}+H_\beta+H_\gamma)(H_\gamma+1)
		+Y_\alpha Y_{\beta+\gamma}X_{\alpha+\beta+\gamma}+
	\end{equation*}
	\begin{equation*}
		+(X_{\alpha+\beta}Y_{\alpha+\beta}+H_\alpha+H_\beta)(H_\alpha+1)
		-Y_\gamma Y_{\alpha+\beta}X_{\alpha+\beta+\gamma}-
	\end{equation*}
	\begin{equation*}
		-(Y_\gamma Y_\beta X_{\beta+\gamma}+Y_\beta X_\beta+H_\beta)
		+Y_\alpha Y_\beta X_{\alpha+\beta}=
	\end{equation*}
	This completes the proof.
\end{proof}

\begin{proposition}\label{bab}
	The commutator $[B_\beta,A_\beta]$ is given by
	\begin{align*}
		[B_\beta,A_\beta]&=-H_\beta(H_\alpha+2)(H_\gamma+2)-\\
		&-Y_\beta X_\beta(H_\alpha+H_\gamma+3)
		-Y_{\alpha+\beta}X_{\alpha+\beta}(H_\gamma+1)-\\
		&-Y_{\beta+\gamma}X_{\beta+\gamma}(H_\alpha+1)
		+Y_{\alpha+\beta+\gamma}X_{\alpha+\beta+\gamma}+\\
		&+Y_\alpha Y_\beta X_{\alpha+\beta}
		+Y_\alpha Y_{\beta+\gamma}X_{\alpha+\beta+\gamma}
		-Y_\gamma Y_\beta X_{\beta+\gamma}
		-Y_\gamma Y_{\alpha+\beta}X_{\alpha+\beta+\gamma}.
	\end{align*}
\end{proposition}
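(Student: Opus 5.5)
The plan is to compute $[B_\beta,A_\beta]$ the same way as Propositions \ref{qr} and \ref{baabc}. Since $B_\beta=Y_\beta$, we have
\[ [B_\beta,A_\beta]=[Y_\beta,X_\beta(H_\alpha+1)(H_\gamma+1)]
	-[Y_\beta,Y_\alpha X_{\alpha+\beta}(H_\gamma+1)]
	+[Y_\beta,Y_\gamma X_{\beta+\gamma}(H_\alpha+1)]
	-[Y_\beta,Y_\alpha Y_\gamma X_{\alpha+\beta+\gamma}], \]
and each of the four pieces is expanded by the Leibniz rule $[x,yz]=[x,y]z+y[x,z]$, which is the special case of Corollary \ref{corcomm} used throughout. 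The inputs are the $\esl(4,\C)$ brackets $[Y_\beta,X_\beta]=-H_\beta$ and $[Y_\beta,H_\alpha]=-Y_\beta$, $[Y_\beta,H_\gamma]=-Y_\beta$ (since $\beta(H_\alpha)=\beta(H_\gamma)=-1$). We also need $[Y_\beta,Y_\alpha]=\pm Y_{\alpha+\beta}$, $[Y_\beta,Y_\gamma]=\pm Y_{\beta+\gamma}$, $[Y_\beta,X_{\alpha+\beta}]=\pm X_\alpha$, $[Y_\beta,X_{\beta+\gamma}]=\pm X_\gamma$ and $[Y_\beta,X_{\alpha+\beta+\gamma}]=0$. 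I would fix all these signs once from the matrix realization \eqref{rootvec}, checking them against the relations already used in the paper, e.g.\ $[X_\gamma,X_{\alpha+\beta}]=-X_{\alpha+\beta+\gamma}$.

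After expansion, every term has to be brought to the normal form $Y_*\cdots X_*\,p(H)$ used in Propositions \ref{qr} and \ref{baabc}. Terms with a compact raising operator $X_\alpha$ or $X_\gamma$ on the right are then discarded, because the identity is claimed as an operator identity on $K$-highest weight vectors. Terms of the form $X_\alpha Y_{\beta+\gamma}$ or $X_\gamma Y_{\alpha+\beta}$ that arise need one more commutation, e.g.\ $X_\gamma Y_{\alpha+\beta}=Y_{\alpha+\beta}X_\gamma+[X_\gamma,Y_{\alpha+\beta}]$, where $[X_\gamma,Y_{\alpha+\beta}]=0$ since $\gamma-\alpha-\beta$ is not a root. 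Products such as $X_\alpha Y_{\alpha+\beta}$ produce $\pm Y_\beta$ plus a term killed on highest weight vectors. Finally, $H$'s are moved to the right via $H Y_\delta=Y_\delta(H-\delta(H))$, which accounts for the shifted constants like $H_\alpha+2$ and $H_\gamma+2$.

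Concretely, the first piece gives $-H_\beta(H_\alpha+1)(H_\gamma+1)$ plus corrections from $Y_\beta$ passing the $H$'s. These corrections produce $X_\beta Y_\beta$-type terms, which I rewrite as $Y_\beta X_\beta-H_\beta$. The $-H_\beta$ contributions combine with the main term to give $-H_\beta(H_\alpha+2)(H_\gamma+2)$, together with the coefficient $-(H_\alpha+H_\gamma+3)$ on $Y_\beta X_\beta$. The second and third pieces yield the terms $Y_{\alpha+\beta}X_{\alpha+\beta}(H_\gamma+1)$, $Y_{\beta+\gamma}X_{\beta+\gamma}(H_\alpha+1)$ and the cubic terms $Y_\alpha Y_\beta X_{\alpha+\beta}$, $-Y_\gamma Y_\beta X_{\beta+\gamma}$. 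The fourth piece yields $Y_{\alpha+\beta+\gamma}X_{\alpha+\beta+\gamma}$ (through $[Y_{\alpha+\beta},Y_\gamma]$ or $[Y_\alpha,Y_{\beta+\gamma}]$) and the two cubic terms with $X_{\alpha+\beta+\gamma}$.

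I expect the main obstacle to be the sign bookkeeping and the reordering of triple products such as $Y_\alpha Y_\gamma X_{\alpha+\beta+\gamma}$ past $Y_\beta$. In that step, one has to decide consistently whether to write $Y_{\alpha+\beta}Y_\gamma$ or $Y_\gamma Y_{\alpha+\beta}$, and their difference contributes a $Y_{\alpha+\beta+\gamma}X_{\alpha+\beta+\gamma}$ term. I would handle this by fixing the ordering convention of Proposition \ref{baabc} (compact $Y$ leftmost) and comparing the result term by term with the structurally analogous formula there. The cubic terms are expected to be identical up to sign, which serves as a consistency check.
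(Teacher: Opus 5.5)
Your proposal is correct and follows the route the paper intends (its proof is omitted as ``analogous'' to Proposition~\ref{baabc}). You expand $[Y_\beta,A_\beta]$ by the Leibniz rule, normal-order to $Y\cdots X\,p(H)$, and drop the terms ending in $X_\alpha$ or $X_\gamma$; here these come from $X_{\alpha+\beta}Y_\beta=Y_\beta X_{\alpha+\beta}+X_\alpha$ and $X_{\beta+\gamma}Y_\beta=Y_\beta X_{\beta+\gamma}-X_\gamma$, while $Y_{\alpha+\beta+\gamma}X_{\alpha+\beta+\gamma}$ comes from $Y_{\alpha+\beta}Y_\gamma=Y_\gamma Y_{\alpha+\beta}-Y_{\alpha+\beta+\gamma}$. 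One sign slip: $X_\beta Y_\beta=Y_\beta X_\beta+H_\beta$, not $Y_\beta X_\beta-H_\beta$. It is the minus sign on the correction term $-X_\beta Y_\beta(H_\alpha+H_\gamma+3)$ that gives $-H_\beta(H_\alpha+H_\gamma+3)$, which combines with $-H_\beta(H_\alpha+1)(H_\gamma+1)$ into $-H_\beta(H_\alpha+2)(H_\gamma+2)$, as you state.
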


\begin{proof}
	The proof is analogous to the previous proof and is therefore omitted.
\end{proof}

At first sight, the formulas appear to be rather complicated.
However, they all contain the same expression:
$Y_\alpha Y_\beta X_{\alpha+\beta}
+Y_\alpha Y_{\beta+\gamma}X_{\alpha+\beta+\gamma}
-Y_\gamma Y_\beta X_{\beta+\gamma}
-Y_\gamma Y_{\alpha+\beta}X_{\alpha+\beta+\gamma}$.
The properties of this expression may be worthy of further investigation.

It turns out that the remaining terms in the commutators are also related. To
describe this relation, we first need a definition.

\begin{definition}
	The operator $C$ is defined by
	\begin{equation*}
		C=2H_\beta-H_\alpha H_\gamma+Y_\beta X_\beta
		+Y_{\alpha+\beta}X_{\alpha+\beta}+Y_{\beta+\gamma}X_{\beta+\gamma}
		+Y_{\alpha+\beta+\gamma}X_{\alpha+\beta+\gamma}.
	\end{equation*}
\end{definition}

\begin{remark}
	The operator $C$ possesses several interesting properties that merit further
	investigation. In particular, one can identify the Casimir operator in its
	expression, so that $C$ becomes a function of $H_\alpha$, $H_\beta$, and
	$H_\gamma$.
\end{remark}

\begin{proposition}
	The following identity holds:
	\begin{equation}\label{aqr}
		[B_{\alpha+\beta+\gamma},A_{\alpha+\beta+\gamma}]+[Q,R]=
		C(H_\alpha+H_\beta+H_\gamma+1).
	\end{equation}
\end{proposition}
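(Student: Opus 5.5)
The plan is to prove \eqref{aqr} by adding the explicit formulas of Proposition~\ref{qr} and Proposition~\ref{baabc} term by term, and comparing the result with $C(H_\alpha+H_\beta+H_\gamma+1)$. No new commutator computation should be needed, since both propositions already present their answers in the normal form $Y_*X_*H_*$ (or $Y_*Y_*X_*$), where polynomials in the Cartan elements stand on the right. As in those propositions, the identity is understood on $K$-highest weight vectors, because terms such as $Y_{\alpha+\beta}X_\beta X_\alpha$ were already discarded there. Throughout I write $S=H_\alpha+H_\beta+H_\gamma$.

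I will sort the terms of the sum into three groups.

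\textbf{Purely Cartan terms.} From $[Q,R]$ we get $2H_\beta(S+1)+H_\alpha H_\gamma$, and from $[B_{\alpha+\beta+\gamma},A_{\alpha+\beta+\gamma}]$ we get $-H_\alpha H_\gamma(S+2)$. Their sum is $2H_\beta(S+1)-H_\alpha H_\gamma(S+1)=(2H_\beta-H_\alpha H_\gamma)(S+1)$.

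\textbf{Quadratic terms $Y_\delta X_\delta$.} I collect the right Cartan factor for each noncompact root $\delta$:
\begin{itemize}
\item for $\delta=\beta$: $(S+2)-1=S+1$;
\item for $\delta=\alpha+\beta$: $(H_\beta+H_\gamma)+(H_\alpha+1)=S+1$;
\item for $\delta=\beta+\gamma$: $(H_\alpha+H_\beta)+(H_\gamma+1)=S+1$;
\item for $\delta=\alpha+\beta+\gamma$: $(H_\beta-2)+(H_\alpha+H_\gamma+3)=S+1$.
\end{itemize}

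\textbf{Cubic terms.} These form the common expression $Y_\alpha Y_\beta X_{\alpha+\beta}+Y_\alpha Y_{\beta+\gamma}X_{\alpha+\beta+\gamma}-Y_\gamma Y_\beta X_{\beta+\gamma}-Y_\gamma Y_{\alpha+\beta}X_{\alpha+\beta+\gamma}$ noted after Proposition~\ref{bab}. It appears with sign $-1$ in $[Q,R]$ and with sign $+1$ in $[B_{\alpha+\beta+\gamma},A_{\alpha+\beta+\gamma}]$, so it cancels completely.

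Adding the three groups, the sum equals
\[\bigl(2H_\beta-H_\alpha H_\gamma+Y_\beta X_\beta+Y_{\alpha+\beta}X_{\alpha+\beta}+Y_{\beta+\gamma}X_{\beta+\gamma}+Y_{\alpha+\beta+\gamma}X_{\alpha+\beta+\gamma}\bigr)(S+1),\]
which is exactly $C(S+1)$ by the definition of $C$.

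The only real obstacle is bookkeeping. First, I must make sure both propositions use the same convention: Cartan factors on the right, and the same vanishing on highest weight vectors. This should make the ordering of $C$ and $S+1$ consistent. Second, I must confirm that the signs in Propositions~\ref{qr} and \ref{baabc} are trusted as stated. If the signs of the cubic terms turned out not to cancel, I would recheck them against the expansion of $[X_\beta X_{\alpha+\beta+\gamma},Y_\beta Y_{\alpha+\beta+\gamma}]$ using Corollary~\ref{corcomm}.
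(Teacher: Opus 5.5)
Your proposal is correct and follows the paper's own argument: the paper proves \eqref{aqr} by adding the formulas of Propositions~\ref{baabc} and~\ref{qr}. Your term-by-term check is exactly the bookkeeping the paper leaves implicit. The Cartan part gives $(2H_\beta-H_\alpha H_\gamma)(H_\alpha+H_\beta+H_\gamma+1)$. Each $Y_\delta X_\delta$ picks up the factor $H_\alpha+H_\beta+H_\gamma+1$. The common cubic expression cancels, and the identity holds on $K$-highest weight vectors, as you note.
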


\begin{proof}
	The result directly from Propositions \ref{baabc} and \ref{qr}.
\end{proof}

\begin{proposition}
	The following identity holds:
	\begin{equation}\label{bqr}
		[B_\beta,A_\beta]+[Q,R]=C(H_\beta-1).
	\end{equation}
\end{proposition}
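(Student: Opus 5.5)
The plan is to argue exactly as for \eqref{aqr}: add the explicit normal-ordered expressions for $[B_\beta,A_\beta]$ from Proposition~\ref{bab} and for $[Q,R]$ from Proposition~\ref{qr}, then compare the sum term by term with $C(H_\beta-1)$. Both commutator formulas are already written in the form $Y_*X_*H_*$, with all Cartan factors on the right and with terms that annihilate $K$-highest weight vectors already discarded. So no further reordering should be needed, and the identity is meant on $K$-highest weight vectors, as the earlier ones are. One caveat: Proposition~\ref{bab} is stated there without proof (“analogous”), so this argument takes it as given.

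First step: the cubic terms. The four terms
\[ Y_\alpha Y_\beta X_{\alpha+\beta}+Y_\alpha Y_{\beta+\gamma}X_{\alpha+\beta+\gamma}-Y_\gamma Y_\beta X_{\beta+\gamma}-Y_\gamma Y_{\alpha+\beta}X_{\alpha+\beta+\gamma} \]
appear with sign $+$ in Proposition~\ref{bab} and with sign $-$ in Proposition~\ref{qr}. Hence they cancel in the sum, which is consistent with $C$ containing no such terms.

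Second step: the four quadratic terms $Y_\delta X_\delta$. I will check that each coefficient equals $H_\beta-1$:
\begin{itemize}
\item for $\delta=\beta$: $(H_\alpha+H_\beta+H_\gamma+2)-(H_\alpha+H_\gamma+3)=H_\beta-1$;
\item for $\delta=\alpha+\beta$: $(H_\beta+H_\gamma)-(H_\gamma+1)=H_\beta-1$;
\item for $\delta=\beta+\gamma$: $(H_\alpha+H_\beta)-(H_\alpha+1)=H_\beta-1$;
\item for $\delta=\alpha+\beta+\gamma$: $(H_\beta-2)+1=H_\beta-1$.
\end{itemize}
This reproduces the part $\sum_\delta Y_\delta X_\delta$ of $C$, multiplied on the right by $H_\beta-1$.

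Third step: the purely Cartan terms, which is where a slip is most likely. Here I must show
\[ 2H_\beta(H_\alpha+H_\beta+H_\gamma+1)+H_\alpha H_\gamma-H_\beta(H_\alpha+2)(H_\gamma+2)=(2H_\beta-H_\alpha H_\gamma)(H_\beta-1). \]
Expanding, the terms $2H_\alpha H_\beta$ and $2H_\beta H_\gamma$ cancel. The left side becomes $2H_\beta^2-2H_\beta+H_\alpha H_\gamma-H_\alpha H_\beta H_\gamma$, which factors as $(H_\beta-1)(2H_\beta-H_\alpha H_\gamma)$.

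Since the Cartan elements commute with each other, the placement of $H_\beta-1$ on the right is consistent with the definition of $C$. Collecting the three steps gives \eqref{bqr}.
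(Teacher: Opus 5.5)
Your proposal is correct and matches the paper's proof, which simply adds the formulas of Propositions~\ref{bab} and~\ref{qr}. Your term-by-term check makes that one-line argument explicit, and each step is accurate: the four cubic terms cancel, each $Y_\delta X_\delta$ coefficient reduces to $H_\beta-1$, and the Cartan part factors as $(2H_\beta-H_\alpha H_\gamma)(H_\beta-1)$.
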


\begin{proof}
	The result directly from Propositions \ref{bab} and \ref{qr}.
\end{proof}

Now, we want to eliminate the operator $C$ and obtain a relation
connecting the coefficients. This relation will be used later.

\begin{theorem}\label{qrel}
	The following identity holds:
	\begin{equation}\label{qrrel}
		(H_\alpha+H_\gamma+2)[Q,R]=
		(H_\beta-1)[B_{\alpha+\beta+\gamma},A_{\alpha+\beta+\gamma}]
		+(H_\alpha+H_\beta+H_\gamma+1)[A_\beta,B_\beta]
	\end{equation}
\end{theorem}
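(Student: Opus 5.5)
The plan is to eliminate $C$ between the identities \eqref{aqr} and \eqref{bqr}. The only real subtlety is the order in which the Cartan factors are multiplied, because these operators act on highest weight vectors and shift weights.

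\textbf{Step 1: fix the order of multiplication.} Each of $[Q,R]$, $[B_{\alpha+\beta+\gamma},A_{\alpha+\beta+\gamma}]$ and $[B_\beta,A_\beta]$ has total weight zero, so it maps a highest weight vector $v$ of weight $(H_\alpha,H_\beta,H_\gamma)$ to a vector of the same weight. The same holds for $C$, which is a Cartan polynomial plus terms of the form $Y_\delta X_\delta$. Hence every factor $H_\alpha+H_\gamma+2$, $H_\beta-1$, $H_\alpha+H_\beta+H_\gamma+1$ commutes with all four operators in \eqref{aqr} and \eqref{bqr}, and can be read as multiplication by a scalar on either side. This also lets us write the factors on the left, as in \eqref{qrrel}.

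\textbf{Step 2: multiply and subtract.} Write $S=H_\alpha+H_\beta+H_\gamma+1$. Multiply \eqref{aqr} by $H_\beta-1$ and \eqref{bqr} by $S$. The right-hand sides become $C\,S(H_\beta-1)$ and $C(H_\beta-1)S$, which are equal by Step 1. Subtracting gives
\[ (H_\beta-1)[B_{\alpha+\beta+\gamma},A_{\alpha+\beta+\gamma}]+(H_\beta-1)[Q,R]-S[B_\beta,A_\beta]-S[Q,R]=0. \]
Now $S-(H_\beta-1)=H_\alpha+H_\gamma+2$. Also $-S[B_\beta,A_\beta]=S[A_\beta,B_\beta]$. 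Collecting the $[Q,R]$ terms therefore yields exactly \eqref{qrrel}.

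\textbf{Main obstacle.} The algebra above is trivial. What needs care is the legitimacy of Step 1, namely that $C$ and the commutators are weight-zero operators, so that the Cartan factors commute with them. This is the same convention already used in \eqref{aqr} and \eqref{bqr}, where the factors were written on the right; the relation is understood on $K$-highest weight vectors.

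As an independent check, one can verify \eqref{qrrel} directly from Propositions~\ref{qr}, \ref{baabc} and \ref{bab}. The common cubic expression $Y_\alpha Y_\beta X_{\alpha+\beta}+\dots$ appears with total coefficient $-(H_\alpha+H_\gamma+2)$ on the left of \eqref{qrrel}, and with $(H_\beta-1)-S=-(H_\alpha+H_\gamma+2)$ on the right, so these terms match. Comparing the remaining $Y_\delta X_\delta$ and pure Cartan terms is a routine, if tedious, polynomial comparison.
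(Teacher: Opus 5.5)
Your proposal is correct and matches the paper's proof, which consists exactly of forming $(H_\beta-1)\cdot\eqref{aqr}-(H_\alpha+H_\beta+H_\gamma+1)\cdot\eqref{bqr}$ so that $C$ cancels. Your Step~1 adds something the paper leaves implicit: since $C$ and the three commutators have weight zero, the Cartan factors can be moved freely.
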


\begin{proof}
	It is enough to compute
	$(H_\beta-1)\cdot\eqref{aqr}-(H_\alpha+H_\beta+H_\gamma+1)\cdot$\eqref{bqr}.
\end{proof}

\section{\texorpdfstring{Unitary $(\g,K)$ modules}{Unitary (g,K) modules}}

\subsection{Adjoint operators}

Now, let us consider unitary $(\g,K)$ modules. The inner product will be denoted
by $(\cdot,\cdot)$ and the adjoint by $*$. We will no longer use $*$ to denote
the conjugate transpose. The $(\g,K)$ module is unitary if
\begin{equation*}
	X^*=-X,\quad\forall X\in\go.
\end{equation*}
Let us recall that
\begin{equation*}
	(iX)^*=(iX),\quad\forall X\in\go.
\end{equation*}
Now, we calculate $X^*$ for $X\in\g$. Relations \eqref{xyagp}--\eqref{xyagz}
show that
\begin{gather}
	H_\alpha^*=H_\alpha,\;H_\beta^*=H_\beta,\;H_\gamma^*=H_\gamma,\label{hst}\\
	X_\alpha^*=Y_\alpha,\;Y_\alpha^*=X_\alpha,\;
	X_\gamma^*=Y_\gamma,\;Y_\gamma^*=X_\gamma,\label{acst}\\
	X_\beta^*=-Y_\beta,\;Y_\beta^*=-X_\beta,\;
	X_{\alpha+\beta}^*=-Y_{\alpha+\beta},\;
	Y_{\alpha+\beta}^*=-X_{\alpha+\beta},\label{bst}\\
	X_{\beta+\gamma}^*=-Y_{\beta+\gamma},\;
	Y_{\beta+\gamma}^*=-X_{\beta+\gamma},
	X_{\alpha+\beta+\gamma}^*=-Y_{\alpha+\beta+\gamma},\;
	Y_{\alpha+\beta+\gamma}^*=-X_{\alpha+\beta+\gamma}\label{abcst}.
\end{gather}
Now, let us calculate $A_*^*$ and $B_*^*$ for our operators $A_*$ and $B_*$.
Firstly, let us analyze one interesting detail.

\begin{theorem}\label{thinpr}
	Let $v_{n,k,m}$ and $w_{p,r,q}$ be the highest weight vectors. If
	\begin{equation}
		(Y_\alpha^aY_\gamma^bv_{n,k,m},Y_\alpha^cY_\gamma^dw_{p,r,q})\neq0
	\end{equation}
	then $a=c$, $b=d$ and $(n,k,m)=(p,r,q)$
\end{theorem}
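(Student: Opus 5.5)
The plan is to combine two facts: weight spaces for the self-adjoint Cartan elements are mutually orthogonal, and the compact raising operators are adjoint to the compact lowering ones. Throughout we work in a unitary $(\g,K)$ module. By \eqref{hst} the operators $H_\alpha$, $H_\gamma$ and $H_\beta$ are self-adjoint, and hence so is $H_0=H_\alpha+2H_\beta+H_\gamma$. By \eqref{acst} we have $X_\alpha^*=Y_\alpha$ and $X_\gamma^*=Y_\gamma$.

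\emph{Step 1: weight considerations.} Eigenvectors of a self-adjoint operator with distinct eigenvalues are orthogonal. The vector $Y_\alpha^aY_\gamma^bv_{n,k,m}$ is a joint eigenvector of $(H_\alpha,H_0,H_\gamma)$ with eigenvalues $(n-2a,k,m-2b)$; this uses the $\esl(2,\C)$ formulas \eqref{sl2} and the fact that $H_0$ commutes with $Y_\alpha$ and $Y_\gamma$. Likewise, the eigenvalues of $Y_\alpha^cY_\gamma^dw_{p,r,q}$ are $(p-2c,r,q-2d)$. So a nonzero inner product forces
\[ k=r,\qquad n-2a=p-2c,\qquad m-2b=q-2d. \]
It therefore suffices to prove $a=c$ and $b=d$, since then $n=p$ and $m=q$ follow at once.

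\emph{Step 2: showing $a=c$.} Suppose, say, $a>c$; the case $a<c$ is symmetric, using the conjugate-symmetry of the inner product. Since $Y_\alpha$ commutes with $Y_\gamma$, we can move $Y_\alpha^c$ to the left-hand slot as $X_\alpha^c$:
\[ (Y_\alpha^aY_\gamma^bv,\,Y_\alpha^cY_\gamma^dw)=(X_\alpha^cY_\alpha^aY_\gamma^bv,\,Y_\gamma^dw). \]
Because $X_\alpha$ commutes with $Y_\gamma$, and $v$ is an $\alpha$-highest weight vector, the $\esl(2,\C)$ relations give
\[ X_\alpha^cY_\alpha^a v=\text{const}\cdot Y_\alpha^{a-c}v . \]
We then move the remaining $Y_\alpha^{a-c}$ to the right-hand slot, which turns the product into a constant times
\[ (Y_\gamma^bv,\,X_\alpha^{a-c}Y_\gamma^dw)=(Y_\gamma^bv,\,Y_\gamma^dX_\alpha^{a-c}w)=0, \]
because $X_\alpha w=0$. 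Hence a nonzero product forces $a=c$.

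\emph{Step 3: showing $b=d$.} This is the same argument with the roles of $\alpha$ and $\gamma$ exchanged, using that $X_\gamma$ commutes with $Y_\alpha$ and that $X_\gamma w=0$. Combining Steps 1–3 gives $a=c$, $b=d$ and $(n,k,m)=(p,r,q)$.

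The main point needing care is the bookkeeping in Step 2: checking that the $\esl(2,\C)$ constant is harmless (it is irrelevant here, since we only need the product to vanish) and that only commutations between different $\esl(2,\C)$ factors are used. I expect no real obstacle beyond this. Note that the argument only identifies the triple; it does not claim that $v$ and $w$ lie in the same copy of $V_{n,k,m}$ when the multiplicity exceeds one.
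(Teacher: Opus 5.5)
Your proof is correct and follows essentially the paper's argument. You use orthogonality of joint eigenspaces of the self-adjoint Cartan elements (you use $H_0$ where the paper uses $H_\beta$ to get $k=r$), and you use the adjoint relations $X_\alpha^*=Y_\alpha$, $X_\gamma^*=Y_\gamma$ together with $X_\alpha w=X_\gamma w=0$ to force $a=c$ and $b=d$. The only difference is bookkeeping in Step 2: the paper inserts $Y_\alpha^{c+1}X_\alpha^{c+1}$, which requires the resulting $\esl(2,\C)$ scalar to be nonzero, whereas you move $Y_\alpha^c$ and then $Y_\alpha^{a-c}$ across the inner product and get vanishing directly, so you never need any constant to be nonzero.
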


\begin{proof}
	Let us compare $n-2a$ and $p-2c$. Since
	\begin{gather*}
		(n-2a)(Y_\alpha^aY_\gamma^bv_{n,k,m},Y_\alpha^cY_\gamma^dw_{p,r,q})
		\stackrel{\eqref{sl2}}{=}
		(H_\alpha Y_\alpha^aY_\gamma^bv_{n,k,m},Y_\alpha^cY_\gamma^dw_{p,r,q})
		\stackrel{\eqref{hst}}{=}\\
		\stackrel{\eqref{hst}}{=}
		(Y_\alpha^aY_\gamma^bv_{n,k,m},H_\alpha Y_\alpha^cY_\gamma^dw_{p,r,q})
		\stackrel{\eqref{sl2}}{=}
		(p-2c)(Y_\alpha^aY_\gamma^bv_{n,k,m},Y_\alpha^cY_\gamma^dw_{p,r,q})
	\end{gather*}
	the condition produces $n-2a=p-2c$. If $a=c$ then $n=p$. If $a\neq c$ then
	we can assume that $a>c$. Since
	\begin{gather*}
		0\neq(Y_\alpha^{c+1}X_\alpha^{c+1}Y_\alpha^aY_\gamma^bv_{n,k,m},
		Y_\alpha^cY_\gamma^dw_{p,r,q})\stackrel{\eqref{acst}}{=}\\
		\stackrel{\eqref{acst}}{=}(X_\alpha^{c+1}Y_\alpha^aY_\gamma^bv_{n,k,m},
		X_\alpha^{c+1}Y_\alpha^cY_\gamma^dw_{p,r,q})=0
	\end{gather*}
	it follows $a=c$.
	Similarly, the action of $H_\gamma$ and our condition produce $m-2b=q-2d$,
	$b=d$ and $m=q$.
	Finally, the relation \eqref{hst} ($H_\beta^*=H_\beta$) produces
	\begin{equation*}
		(H_\beta Y_\alpha^aY_\gamma^bv_{n,k,m},Y_\alpha^cY_\gamma^dw_{p,r,q})=
		(Y_\alpha^aY_\gamma^bv_{n,k,m},H_\beta Y_\alpha^cY_\gamma^dw_{p,r,q}).
	\end{equation*}
	It follows that the action of $H_\beta$ produces the same value for both
	vectors. Since the action of $H_\alpha$ and $H_\gamma$ produce the
	same value, it follows $k=r$.
\end{proof}

\begin{remark}\label{innind}
	This theorem shows that the inner product of two vectors is 0 if vectors
	have different weight. It shows even more: the inner product is 0 if
	vectors belong to $K$-types of different highest weight vectors.
	It simplifies our construction of inner product since we have to consider
	the inner product on $K$-types of the same highest weight.
\end{remark}

\begin{proposition}\label{star}
	Adjoint operators are given by
	\begin{gather*}
		A_\beta^*=-B_\beta(H_\alpha+2)(H_\gamma+2),\;
		B_\beta^*=-A_\beta\frac1{(H_\alpha+1)(H_\gamma+1)},\\
		A_{\alpha+\beta}^*=-B_{\alpha+\beta}\frac{H_\gamma+2}{H_\alpha+1},\;
		B_{\alpha+\beta}^*=-A_{\alpha+\beta}\frac{H_\alpha+2}{H_\gamma+1},\\
		A_{\beta+\gamma}^*=-B_{\beta+\gamma}\frac{H_\alpha+2}{H_\gamma+1},\;
		B_{\beta+\gamma}^*=-A_{\beta+\gamma}\frac{H_\gamma+2}{H_\alpha+1},\\
		A_{\alpha+\beta+\gamma}^*=
			-B_{\alpha+\beta+\gamma}\frac1{(H_\alpha+1)(H_\gamma+1)},\;
		B_{\alpha+\beta+\gamma}^*=
			-A_{\alpha+\beta+\gamma}(H_\alpha+2)(H_\gamma+2).
	\end{gather*}
\end{proposition}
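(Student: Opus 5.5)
The plan is to interpret the statement as an identity of operators on the space spanned by the $K$-highest weight vectors, with the restricted inner product. By Theorem \ref{thinpr} and Remark \ref{innind}, that space is orthogonal to every vector of the form $Y_\alpha^aY_\gamma^b u$ with $a+b>0$ and $u$ a highest weight vector. For each noncompact $\delta$ I fix highest weight vectors $v$ of type $(n,k,m)$ and $w$. I then compute $(A_\delta v,w)$ and rewrite it as $(v,T w)$ with $T$ an explicit multiple of $B_\delta$; this identifies $A_\delta^*=T$. Without loss of generality $w$ has the weight of $A_\delta v$, since otherwise both sides vanish by Theorem \ref{thinpr}.

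The first step is to expand $A_\delta$ by Definition \ref{def1}. The factors $(H_\alpha+1)$ and $(H_\gamma+1)$ act on $v$ by real scalars. Every term beginning with $Y_\alpha$ or $Y_\gamma$ pairs to zero with $w$, because $(Y_\alpha z,w)=(z,X_\alpha w)=0$ by \eqref{acst}, and similarly for $Y_\gamma$. Hence only the leading term survives. For instance
\[(A_{\alpha+\beta}v,w)=(m+1)(X_{\alpha+\beta}v,w)=-(m+1)(v,Y_{\alpha+\beta}w)\]
by \eqref{bst}. Next I replace $Y_{\alpha+\beta}$ using Proposition \ref{xyizab}:
\[Y_{\alpha+\beta}w=B_{\alpha+\beta}\tfrac{1}{H_\alpha+1}w-Y_\alpha B_\beta\tfrac1{H_\alpha+1}w.\]
The second term is orthogonal to $v$ by Theorem \ref{thinpr}. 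Now $w$ has weight $(n+1,k+2,m-1)$, so $H_\alpha+1$ acts on $w$ by $n+2$. This gives $(A_{\alpha+\beta}v,w)=-\frac{m+1}{n+2}(v,B_{\alpha+\beta}w)$. Writing $m+1=H_\gamma(w)+2$ and $n+2=H_\alpha(w)+1$, I obtain exactly $A_{\alpha+\beta}^*=-B_{\alpha+\beta}\frac{H_\gamma+2}{H_\alpha+1}$, where the Cartan factors act on $w$ before $B_{\alpha+\beta}$.

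The same recipe handles all eight cases. For the $A_\delta$, only the leading $X_\delta$ term survives, with a scalar coming from the $H$-factors evaluated on $v$. In Proposition \ref{xyizab}, only the leading $B_\delta$ term survives, with a scalar evaluated on $w$. All the scalars are then re-expressed through the eigenvalues of $w$, using the shift of $A_\delta$ computed as in the proof of Proposition \ref{parity}. For the $B_\delta^*$ formulas I argue symmetrically: expand $B_\delta$ and drop the $Y_\alpha$ and $Y_\gamma$ terms, then use $Y_\delta^*=-X_\delta$ and invert via Proposition \ref{xyizab}, keeping only the $A_\delta$ term. As a consistency check, I will verify $(A_\delta^*)^*=A_\delta$ using the self-adjointness of the $H$'s.

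I expect the main obstacle to be the bookkeeping in the cases $A_\beta$ and $B_{\alpha+\beta+\gamma}$. These carry two Cartan factors, and the eigenvalue shifts on $v$ versus $w$ (by $\pm1$ in both $n$ and $m$) produce the factors $(H_\alpha+2)(H_\gamma+2)$ versus $\frac1{(H_\alpha+1)(H_\gamma+1)}$. I will handle them by writing the weight of $w$ explicitly in each case before translating the scalars. A second point requiring care is that every discarded term really has the form $Y_\alpha(\cdot)$ or $Y_\gamma(\cdot)$ applied to a vector paired against a highest weight vector. This holds because in Definition \ref{def1} and Proposition \ref{xyizab} the compact lowering operators always stand on the far left.
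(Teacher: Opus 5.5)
Your proposal is correct, and it is essentially the alternative argument the paper itself sketches in the Remark immediately following this proposition, where it is worked out for $\delta=\alpha+\beta$. In both, you pair against a highest weight vector, discard every term with $Y_\alpha$ or $Y_\gamma$ on the left via $Y_\alpha^*=X_\alpha$ and $Y_\gamma^*=X_\gamma$, keep only the leading term, and convert the resulting scalars through the weight of $w$.

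The paper's official proof is more global. It introduces operators $C_\delta$, $D_\delta$ on all of $V$. Each consists of $X_\delta$ (resp.\ $Y_\delta$) plus correction terms, such as $X_{\beta+\gamma}X_\alpha\frac1{H_\alpha+2}$ or $Y_{\alpha+\beta}X_\alpha\frac1{H_\alpha+2}$, that vanish on highest weight vectors. It then verifies the exact identity $C_\delta^*=-D_\delta$ on $V$ term by term. Each term contains exactly one noncompact root vector, which gives the sign, and Cartan factors are moved using shifts like $X_\beta X_\alpha\frac1{H_\alpha+2}=\frac1{H_\alpha+1}X_\beta X_\alpha$. Finally it restricts to highest weight vectors, where $C_\delta$ and $D_\delta$ become $A_\delta$ and $B_\delta$ times functions of $H_\alpha$, $H_\gamma$.

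The official route gives a genuine adjoint identity on the whole module. However, it requires designing the correction terms so that the compact-lowering terms of $D_\delta$ are matched exactly. Your route needs no auxiliary operators and uses only the leading coefficient of each expansion, so the exact form of the lower-order terms in Definition~\ref{def1} and Proposition~\ref{xyizab} does not matter. It also makes the final scalar bookkeeping explicit, which the official proof leaves implicit.
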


\begin{proof}
	Let us consider operators
	\begin{enumerate}
		\item $\ds C_{\alpha+\beta+\gamma}=X_{\alpha+\beta+\gamma}
			+X_{\beta+\gamma}X_\alpha\frac1{H_\alpha+2}\\
			\hspace*{18mm}-X_{\alpha+\beta}X_\gamma\frac1{H_\gamma+2}
			-X_\beta X_\alpha X_\gamma\frac1{(H_\alpha+2)(H_\gamma+2)}$
		\item $\ds C_{\alpha+\beta}=X_{\alpha+\beta}
			+Y_\gamma X_{\alpha+\beta+\gamma}\frac1{H_\gamma+1}
			+X_\beta X_\alpha\frac1{H_\alpha+2}$
		\item $\ds C_{\beta+\gamma}=X_{\beta+\gamma}
			-Y_\alpha X_{\alpha+\beta+\gamma}\frac1{H_\alpha+1}
			-X_\beta X_\gamma\frac1{H_\gamma+2}$
		\item $\ds C_{\beta}=X_{\beta}
			-Y_\alpha X_{\alpha+\beta}\frac1{H_\alpha+1}\\
			\hspace*{10mm}+Y_\gamma X_{\beta+\gamma}\frac1{H_\gamma+1}
			-Y_\alpha Y_\gamma X_{\alpha+\beta+\gamma}
			\frac1{(H_\alpha+1)(H_\gamma+1)}$
		\item $\ds D_{\alpha+\beta+\gamma}=Y_{\alpha+\beta+\gamma}
			+Y_\alpha Y_{\beta+\gamma}\frac1{H_\alpha+1}\\
			\hspace*{18mm}-Y_\gamma Y_{\alpha+\beta}\frac1{H_\gamma+1}
			-Y_\alpha Y_\gamma Y_\beta\frac1{(H_\alpha+1)(H_\gamma+1)}$
		\item $\ds D_{\alpha+\beta}=Y_{\alpha+\beta}
			+Y_\alpha Y_\beta\frac1{H_\alpha+1}
			+Y_{\alpha+\beta+\gamma}X_\gamma\frac1{H_\gamma+2}$
		\item $\ds D_{\beta+\gamma}=Y_{\beta+\gamma}
			-Y_\gamma Y_\beta\frac1{H_\gamma+1}
			-Y_{\alpha+\beta+\gamma}X_\alpha\frac1{H_\alpha+2}$
		\item $\ds D_\beta=Y_\beta
			-Y_{\alpha+\beta}X_\alpha\frac1{H_\alpha+2}\\
			\hspace*{10mm}+Y_{\beta+\gamma}X_\gamma\frac1{H_\gamma+2}
			-Y_{\alpha+\beta+\gamma}X_\alpha
			X_\gamma\frac1{(H_\alpha+2)(H_\gamma+2)}$
	\end{enumerate}
	We claim that
	\begin{equation*}
		C_{\alpha+\beta+\gamma}^*=-D_{\alpha+\beta+\gamma},
		C_{\alpha+\beta}^*=-D_{\alpha+\beta},
		C_{\beta+\gamma}^*=-D_{\beta+\gamma},
		C_\beta^*=-D_\beta.
	\end{equation*}
	One should notice that in each expression for $C_*$ and $D_*$, noncompact
	roots appear only once in each term. It produces a minus sign
	(by \eqref{bst}) and \eqref{abcst}).
	It remains to be clarified how $H_\alpha$ and $H_\gamma$ behave.
	Observe that the operators $H$ have been placed on the right-hand side of
	each expression for convenience. For example, it is also possible to write
	\begin{equation*}
		C_{\alpha+\beta}=X_{\alpha+\beta}
			+Y_\gamma X_{\alpha+\beta+\gamma}\frac1{H_\gamma+1}
			+\frac1{H_\alpha+1}X_\beta X_\alpha.
	\end{equation*}
	It remains to show that
	\begin{equation}\label{mult}
		X_\beta X_\alpha\frac1{H_\alpha+2}=\frac1{H_\alpha+1}X_\beta X_\alpha.
	\end{equation}
	It reduces to
	\begin{equation*}
		(H_\alpha+1)X_\beta X_\alpha=X_\beta X_\alpha(H_\alpha+2)
	\end{equation*}
	and it follows from the commutation relation
	\[ [H_\alpha,X_\beta X_\alpha]=X_\beta X_\alpha. \]
	Operators $C_*$ and $D_*$ act on the whole space $V$. Operators $A_*$ and
	$B_*$ act on the subspace of highest weights vectors of $K$-types only and
	the restriction of operators $C_*$ and $D_*$ to that space (and the multiplication by appropriate function in $H_\alpha$ and $H_\gamma$)
	produces operators $A_*$ and $B_*$.
\end{proof}

\begin{remark}
	Let us sketch another proof of that proposition.
	Let us choose one noncompact root, say $\alpha+\beta$.
	Let $v$ of the highest weight vector of the weight $\lambda$ and $w$ be
	the highest weight vector of weight $\mu$.
	Since $X_{\alpha+\beta}^*=-Y_{\alpha+\beta}$,
	\begin{equation}\label{xst}
		(X_{\alpha+\beta}v,w)=-(v,Y_{\alpha+\beta}w)
	\end{equation}
	By Proposition \ref{xyizab} and Theorem \ref{thinpr} (since $w$ is of
	the highest weight, the inner product of $w$ and vectors which are not of
	the highest weight is 0)
	\begin{equation*}
		(X_{\alpha+\beta}v,w)=
		\left(A_{\alpha+\beta}\frac1{H_\gamma+1}v,w\right).
	\end{equation*}
	Similarly,
	\begin{equation*}
		(v,Y_{\alpha+\beta}w)=\left(v,B_{\alpha+\beta}\frac1{H_\alpha+1}w\right)
	\end{equation*}
	Hence, the equation \eqref{xst} produces
	\begin{equation*}
		\left(A_{\alpha+\beta}\frac1{H_\gamma+1}v,w\right)=
		-\left(v,B_{\alpha+\beta}\frac1{H_\alpha+1}w\right)
	\end{equation*}
	and it follows
	\begin{equation}\label{den}
		\left(A_{\alpha+\beta}\frac1{H_\gamma+1}\right)^*=
		-B_{\alpha+\beta}\frac1{H_\alpha+1}
	\end{equation}
	or
	\begin{equation}\label{num}
		A_{\alpha+\beta}^*=-B_{\alpha+\beta}\frac{H_\gamma+2}{H_\alpha+1}.
	\end{equation}
	To see why $H_\gamma+2$ appears in the numerator, consider the following
	calculation: if the weight of $v$ is $(n,k,m)$, then the weight of
	$w$ is $(n+1,k+2,m-1)$ (if we want $(X_{\alpha+\beta}v,w)\neq0$). Then
	$H_\gamma+1$ in the denominator on the left hand side in \eqref{den}
	produces $m+1$ and $H_\gamma+2$ in the numerator on the right hand side in
	\eqref{num} produces also $m+1$. One should notice that in \eqref{mult},
	both side act on the same vector of the highest weight.
\end{remark}

\begin{proposition}
	The adjoint operators $Q^*$ and $R^*$ are given by
	\begin{eqnarray}\label{qrstar}
		Q^*=R\qquad\mbox{and}\qquad R^*=Q.
	\end{eqnarray}
\end{proposition}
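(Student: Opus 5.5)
We compute the adjoint directly from the definitions $Q=X_{\alpha+\beta}X_{\beta+\gamma}-X_\beta X_{\alpha+\beta+\gamma}$ and $R=Y_{\alpha+\beta}Y_{\beta+\gamma}-Y_\beta Y_{\alpha+\beta+\gamma}$. The rules are $(xy)^*=y^*x^*$ together with \eqref{bst} and \eqref{abcst}. Every term of $Q$ is a product of exactly two noncompact root vectors, so the two minus signs cancel. This gives
\[ Q^*=(X_{\beta+\gamma})^*(X_{\alpha+\beta})^*-(X_{\alpha+\beta+\gamma})^*(X_\beta)^*
	=Y_{\beta+\gamma}Y_{\alpha+\beta}-Y_{\alpha+\beta+\gamma}Y_\beta . \]
The factors now appear in reversed order compared with $R$, so the remaining task is to reorder them.

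The key step uses the bracket relations in $\esl(4,\C)$. The roots $\alpha+\beta$ and $\beta+\gamma$ sum to $\alpha+2\beta+\gamma$, which is not a root. Likewise $\beta$ and $\alpha+\beta+\gamma$ sum to $\alpha+2\beta+\gamma$, not a root. Hence $[Y_{\alpha+\beta},Y_{\beta+\gamma}]=0$ and $[Y_\beta,Y_{\alpha+\beta+\gamma}]=0$. This is also visible from the matrices in \eqref{rootvec}: $Y_{\alpha+\beta}=E_{31}$ and $Y_{\beta+\gamma}=E_{42}$ commute, and so do $Y_\beta=E_{32}$ and $Y_{\alpha+\beta+\gamma}=E_{41}$. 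So in $U(\g)$ we get $Y_{\beta+\gamma}Y_{\alpha+\beta}=Y_{\alpha+\beta}Y_{\beta+\gamma}$ and $Y_{\alpha+\beta+\gamma}Y_\beta=Y_\beta Y_{\alpha+\beta+\gamma}$. Therefore $Q^*=R$, and $R^*=Q$ follows either from $(Q^*)^*=Q$ or from the symmetric computation.

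The one subtle point is that $Q$ and $R$ were introduced as operators on the space of highest weight vectors. The adjoint must therefore be understood with respect to the inner product restricted to that subspace, as in Proposition~\ref{star}. We need $Q$ and $R$ to preserve highest weight vectors; this holds by the remark computing $[Q,X_\alpha]=0$ (and similarly for $X_\gamma$ and for $R$). Let $v,w$ be highest weight vectors. The identity $(Qv,w)=(v,Q^*w)$ holds on all of $V$ with $Q^*=R$ computed in $U(\g)$ as above. Both $Qv$ and $Rw$ are again highest weight vectors, so restricting to the subspace introduces no correction terms. This is unlike Proposition~\ref{star}, where factors $\frac1{H+1}$ and projections appeared.

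I expect no real obstacle. The only points needing care are checking that both pairs of $Y$'s commute, which is immediate from the matrix units, and confirming that the compact parts play no role here. Neither $Q$ nor $R$ contains $Y_\alpha$, $Y_\gamma$ or $H$-factors, which is why the answer is simply $Q^*=R$ with no weight-dependent factors.
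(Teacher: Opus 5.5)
Your proposal is correct and matches the paper's proof: apply $X_\delta^*=-Y_\delta$ to each two-factor term, so the two signs cancel, and then use that the noncompact $Y$'s commute to restore the order appearing in $R$. Your additional remark is also correct: since $Q$ and $R$ preserve highest weight vectors, the adjoint on that subspace needs no projection. The paper leaves this point implicit.
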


\begin{proof}
	A straightforward calculation gives
	\begin{equation*}
		Q^*=(X_{\alpha+\beta}X_{\beta+\gamma}-X_\beta X_{\alpha+\beta+\gamma})^*
		=Y_{\alpha+\beta}Y_{\beta+\gamma}-Y_\beta Y_{\alpha+\beta+\gamma}=R.
	\end{equation*}
	We used that $X_\delta^*=-Y_\delta$, for $\delta\neq\alpha,\gamma$
	and that the operators appearing in products commute.
	The proof of other formula is analogous.
\end{proof}

\subsection{Coefficients in unitary $(\g,K)$ modules}

We have already defined certain coefficients by \eqref{coefdef} in the
Subsection \ref{coef}. From here on, these coefficients play a more significant
role. Again, let us assume that the highest weight vector $v$ has the weight
$(n,k,m)$ and there are no other highest weight vectors of that weight
(that weight occurs with multiplicity one). We already saw that
\begin{equation}\label{abdelta}
	A_\delta B_\delta v=\lambda(v,\delta) v.
\end{equation}
Let us put
\begin{equation}\label{badelta}
	B_\delta A_\delta v=\mu(v,\delta) v.
\end{equation}
Now, let us take one particular root, say
$\beta$ and calculate $A_\beta B_\beta v$. By Proposition \ref{star},
\begin{equation}\label{bstar}
	B_\beta^*=-A_\beta\frac1{(H_\alpha+1)(H_\gamma+1)}
\end{equation}
or
\begin{equation*}
	A_\beta=-B_\beta^*(H_\alpha+1)(H_\gamma+1)
\end{equation*}
Now, $A_\beta B_\beta v$ transforms to
\begin{equation*}
	A_\beta B_\beta v=-B_\beta^*(H_\alpha+1)(H_\gamma+1)B_\beta v=
	-B_\beta^*B_\beta(H_\alpha+2)(H_\gamma+2)v.
\end{equation*}
It follows from \eqref{abdelta}
\begin{gather*}
	\lambda(v,\beta)(v,v)=(\lambda(v,\beta)v,v)=(A_\beta B_\beta v,v)=\\
	=-(B_\beta^*B_\beta(H_\alpha+2)(H_\gamma+2)v,v)=
	-(n+2)(m+2)(B_\beta v,B_\beta v)
\end{gather*}
and
\begin{equation*}
	\lambda(v,\beta)=-\frac{(n+2)(m+2)(B_\beta v,B_\beta v)}{(v,v)}.
\end{equation*}
If follows that
\begin{equation}\label{bneg}
	\lambda(v,\beta)\in\R,\quad\lambda(v,\beta)\leq0.
\end{equation}
Let us rewrite the above expression in the form
\begin{equation}\label{bvbv}
	(B_\beta v,B_\beta v)=-\frac{\lambda(v,\beta)}{(n+2)(m+2)}(v,v).
\end{equation}
It shows that inner product $(B_\beta v,B_\beta v)$ is determined by values
$(v,v)$ and $\lambda(v,\beta)$.
Let us emphasize one obvious detail. The action of $B_\delta$ on \eqref{abdelta}
produces
\begin{equation*}
	B_\delta A_\delta(B_\delta v)=\lambda(v,\delta)(B_\delta v).
\end{equation*}
Hence, if we start with $B_\delta v\neq0$ and apply operators $A_\delta$ and
$B_\delta$, we obtain the same vector $B_\delta v$ multiplied by the same
coefficient $\lambda(v,\delta)$. Also, by \eqref{badelta}
\begin{equation*}
	B_\delta A_\delta(B_\delta v)=\lambda(v,\delta)(B_\delta v)
	=\mu(B_\delta v,\delta)(B_\delta v)
\end{equation*}
and hence ($B_\delta v\neq0$)
\begin{equation*}
	\lambda(v,\delta)=\mu(B_\delta v,\delta).
\end{equation*}
If $B_\delta v=0$, the norm identity gives $\lambda(v,\delta)=0$, but
$\mu(B_\delta v,\delta)$ is not defined and no coefficient may be propagated
through that zero transition. In what follows, coefficient propagation is used
only along nonzero transitions, which is sufficient for the generic case
considered here.

We proceed with the analogous calculation for
$B_{\alpha+\beta+\gamma}A_{\alpha+\beta+\gamma}$. Namely, the element
$A_{\alpha+\beta+\gamma}$ sends vectors of the highest weight $(n,k,m)$ to
vectors of the highest weight $(n+1,k+2,m+1)$. Elements $B_\beta$ sends vectors
of the highest weight $(n,k,m)$ to vectors of the highest weight
$(n+1,k-2,m+1)$. We will give a more detailed explanation soon.
By Proposition \ref{star},
\begin{equation}\label{astar}
	A_{\alpha+\beta+\gamma}^*=
	-B_{\alpha+\beta+\gamma}\frac1{(H_\alpha+1)(H_\gamma+1)}
\end{equation}
or
\begin{equation*}
	B_{\alpha+\beta+\gamma}=-A_{\alpha+\beta+\gamma}^*(H_\alpha+1)(H_\gamma+1)
\end{equation*}
Now, $B_{\alpha+\beta+\gamma}A_{\alpha+\beta+\gamma}v$ transforms to
\begin{equation*}
	B_{\alpha+\beta+\gamma}A_{\alpha+\beta+\gamma}v=
	-A_{\alpha+\beta+\gamma}^*(H_\alpha+1)(H_\gamma+1)
		A_{\alpha+\beta+\gamma}v=
	-A_{\alpha+\beta+\gamma}^*A_{\alpha+\beta+\gamma}(H_\alpha+2)(H_\gamma+2)v.
\end{equation*}
It follows from \eqref{abdelta}
\begin{gather*}
	\mu(v,\alpha+\beta+\gamma)(v,v)=(\mu(v,\alpha+\beta+\gamma)v,v)=
	(B_{\alpha+\beta+\gamma}A_{\alpha+\beta+\gamma}v,v)=\\
	=-(A_{\alpha+\beta+\gamma}^*A_{\alpha+\beta+\gamma}(H_\alpha+2)(H_\gamma+2)
		v,v)=
	-(n+2)(m+2)(A_{\alpha+\beta+\gamma}v,A_{\alpha+\beta+\gamma}v)
\end{gather*}
and
\begin{equation*}
	\mu(v,\alpha+\beta+\gamma)=
	-\frac{(n+2)(m+2)(A_{\alpha+\beta+\gamma} v,A_{\alpha+\beta+\gamma}v)}
	{(v,v)}.
\end{equation*}
If follows that
\begin{equation}\label{abcneg}
	\mu(v,\alpha+\beta+\gamma)\in\R,\quad\mu(v,\alpha+\beta+\gamma)\leq0.
\end{equation}
Again, we can write
\begin{equation}\label{abcvabcv}
	(A_{\alpha+\beta+\gamma} v,A_{\alpha+\beta+\gamma}v)
	=-\frac{\mu(v,\alpha+\beta+\gamma)}{(n+2)(m+2)}(v,v).
\end{equation}
It shows that the inner product
$(A_{\alpha+\beta+\gamma} v,A_{\alpha+\beta+\gamma}v)$ is determined by
values $\mu(v,\alpha+\beta+\gamma)$ and $(v,v)$.

\begin{remark}
	Let $S$ be the set of $K$ types. $K$ types are denoted by our triples
	$(n,k,m)$ where $n,m\geq0$ and satisfy certain parity conditions.
	Then $S$ is a subset of 2 octants. In that set $S$ there is a subset
	$T$ for which the sum $n+m$ is minimal.
	In the conditional discussion we assume that every $K$-type in $T$ has
	multiplicity one. This assumption reflects the expected multiplicity-one
	behavior on the boundary of the $K$-support and is consistent with the
	standard multiplicity formulas for irreducible admissible representations.
	Our purpose here is to analyze the resulting boundary coefficient system; in
	particular, all coefficients considered on $T$ are genuine scalars rather
	than endomorphisms of higher-dimensional multiplicity spaces.
	Operators $A_{\alpha+\beta}$, $B_{\alpha+\beta}$,
	$A_{\beta+\gamma}$ and $B_{\beta+\gamma}$ leave the vectors from 
	$T$ inside $T$. The action of operators $A_\beta$ and
	$B_{\alpha+\beta+\gamma}$ decrease the sum $n+m$. Since, the sum
	$n+m$ is minimal for vectors in $T$, the action of these two operators is 0.
	Hence, only operators $B_\beta$ and $A_{\alpha+\beta+\gamma}$ act on the
	subspace $T$ and produce a nonzero vector outside of $T$. This is why the
	analysis of these two operators was carried out above. We said that we
	can put $(v,v)=1$ for some chosen vectors in $T$. Note
	that in formulas \eqref{bvbv} and \eqref{abcvabcv} the same expression
	$(n+2)(m+2)$ appears in the denominator. Hence, we can construct the inner
	product without that expression. It looks as an artificial construction,
	but we could use operators $C_*$ and $D_*$ instead of $A_*$ and $B_*$
	(actually, it is more natural and more similar to our operators $X_*$
	and $Y_*$, but it contains the denominator). According to
	Remark \ref{innind}, the inner product is well defined. Also, it is
	clear that the key relation is
	\begin{equation}\label{dneg}
		\lambda(v,\delta),\mu(v,\delta)\in\R,\quad
		\lambda(v,\delta),\mu(v,\delta)\leq0.
	\end{equation}
	for noncompact roots $\delta$. We have proved it for
	$\lambda(v,\beta)$ and $\mu(v,\alpha+\beta+\gamma)$.
	The proof for all other coefficients is the same.
	These observations give necessary local norm relations; they do not by
	themselves prove path independence of the coefficient propagation or provide
	an independent construction of a global invariant inner product. However,
	for the unitary representations under consideration the existence of such a
	global invariant inner product is already guaranteed by the known
	classification. Thus the issue here is not the existence of the invariant
	form, but rather to describe it explicitly in terms of the boundary
	coefficients and the operators introduced above.
\end{remark}

This Remark demonstrates our approach. We will start with the subset $T$
(which is equal to one point for holomorphic series) and from that set we
reconstruct the inner product on the whole $(\g,K)$ module $V$. On the subset
$T$ multiplicities of $K$ types are 1 and it is easy to construct the inner
product according to Theorem \ref{thinpr}. After that we construct the
inner product on the neighborhood of $T$. The key observation are relations
\eqref{bneg}, \eqref{abcneg} and appropriate expressions for remaining two
noncompact roots: $\alpha+\beta$ and $\beta+\gamma$. The construction becomes
more subtle when the multiplicities exceed 1. Also, the definition of $T$ is
not very precise.
Nevertheless, a specific computation we carried out is very promising.
Hence, we have

\begin{conjecture}
	Unitary (\g,$K$) modules are determined by the set $T$ and certain
	additional parameters.
\end{conjecture}

\begin{remark}
	The statement of the Conjecture is not meant as a precise parametrization.
	The unitary dual of $SU(2,2)$ is already known to be parametrized. Our aim
	is to describe such representations from a different point of view. In the
	remainder of the paper we develop relations among the coefficients
	associated with the $K$-types in $T$ and give examples illustrating how
	these relations encode the corresponding $K$-type structure.
\end{remark}

\subsection{Calculation of coefficients}

\begin{lemma}
	The following formulas hold
	\begin{equation}\label{commfun}
		f(H_\alpha)A_{\alpha+\beta+\gamma}=A_{\alpha+\beta+\gamma}f(H_\alpha+1)
	\end{equation}
	for any function $f(H_\alpha)$. The similar formulas hold for all other
	roots and for $H_\gamma$.
\end{lemma}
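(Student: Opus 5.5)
The plan is to reduce the identity to the weight relation $[H_\alpha,A_{\alpha+\beta+\gamma}]=A_{\alpha+\beta+\gamma}$. This relation holds because $A_{\alpha+\beta+\gamma}=X_{\alpha+\beta+\gamma}$ and $(\alpha+\beta+\gamma)(H_\alpha)=1$. From it we get $H_\alpha A_{\alpha+\beta+\gamma}=A_{\alpha+\beta+\gamma}(H_\alpha+1)$, and by induction $H_\alpha^jA_{\alpha+\beta+\gamma}=A_{\alpha+\beta+\gamma}(H_\alpha+1)^j$ for every $j\geq0$. Taking linear combinations proves \eqref{commfun} for every polynomial $f$.

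For an arbitrary function $f$ (in particular the fractions $\frac1{H_\alpha+1}$ used throughout), the meaning of $f(H_\alpha)$ is multiplication by $f(p)$ on the $H_\alpha$-eigenspace of eigenvalue $p$. So the cleanest argument is eigenvector-wise. Take $v$ with $H_\alpha v=pv$. Then $A_{\alpha+\beta+\gamma}v$ has $H_\alpha$-eigenvalue $p+1$, by the same computation as at the start of Section \ref{secgk}. Hence $f(H_\alpha)A_{\alpha+\beta+\gamma}v=f(p+1)A_{\alpha+\beta+\gamma}v=A_{\alpha+\beta+\gamma}f(H_\alpha+1)v$. Since $V$ is a direct sum of $H_\alpha$-eigenspaces (it is a sum of $K$-types), this gives the identity on all of $V$. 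It also gives it on the space of highest weight vectors, where the $A_*$ and $B_*$ are meant to act.

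For the other operators, the same argument needs only the shift of $H_\alpha$ and $H_\gamma$ eigenvalues, which is read off from the weight shifts of $A_*$ and $B_*$. Each $A_\delta$ and $B_\delta$ is by Definition \ref{def1} a sum of monomials. Each monomial is homogeneous of a fixed $\h$-weight: for example, $Y_\gamma X_{\alpha+\beta+\gamma}$ has the weight of $\alpha+\beta$, and the factors $H_\gamma+1$ have weight $0$. So $A_\delta$ shifts $H_\alpha$ by $\epsilon_\alpha(\delta)$ and $H_\gamma$ by $\epsilon_\gamma(\delta)$. The values $(\epsilon_\alpha,\epsilon_\gamma)$ are $(\pm1,\pm1)$, read from the $A$-shifts listed in the proof of Proposition \ref{parity}; $B_\delta$ has the opposite signs. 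This yields $f(H_\alpha)A_\delta=A_\delta f(H_\alpha+\epsilon_\alpha(\delta))$, and analogously for $H_\gamma$, for $B_\delta$, and for $X_\delta,Y_\delta$ with the shift $\delta(H_\alpha)$. The only point needing care, though not a real obstacle, is checking that every monomial in $A_\delta$ and $B_\delta$ has the same weight, so that the shift is uniform. This holds because each monomial is built from root vectors whose roots sum to the same total as the leading term.
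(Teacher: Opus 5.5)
Your argument is correct and its core is the paper's own: $A_{\alpha+\beta+\gamma}$ raises the $H_\alpha$-eigenvalue by one, so $f(H_\alpha)$ acts on $A_{\alpha+\beta+\gamma}v$ by $f(p+1)$, which is exactly what $A_{\alpha+\beta+\gamma}f(H_\alpha+1)$ does to $v$. The polynomial warm-up via the commutator is not needed, while your check that every monomial of $A_\delta$ and $B_\delta$ has the same $\mathfrak h$-weight usefully supplies the justification the paper leaves implicit in ``similar formulas hold''; it also shows the shift is $\delta(H_\alpha)=\pm1$ (opposite sign for $B_\delta$) rather than always $+1$.
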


\begin{proof}
	Let $v$ be a highest weight vector of weight $(n,k,m)$. Then
	$A_{\alpha+\beta+\gamma}v$ has the weight $(n+1,k+2,m+1)$ and
	$f(H_\alpha)$ acts as a multiplication by $f(n+1)$. Hence, we multiply by 
	$f(n+1)$; consequently, $f(H_\alpha+1)$ appears on the right-hand side.
\end{proof}

\begin{example}
	For $f(H_\alpha)=\frac1{H_\alpha+1}$, formula \eqref{commfun} transforms to
	\begin{equation*}
		\frac1{H_\alpha+1}A_{\alpha+\beta+\gamma}=
		A_{\alpha+\beta+\gamma}\frac1{H_\alpha+2}.
	\end{equation*}
	It looks unnatural. However both sides of this formula acting on
	the highest weight vector of weight $(n,k,m)$ produce
	$\frac1{n+2}A_{\alpha+\beta+\gamma}v$ since the weight of
	$A_{\alpha+\beta+\gamma}v$ is $(n+1,k+2,m+1)$.
\end{example}

\begin{proposition}\label{system}
	Let $v$ be a highest weight vector of weight $(n,k,m)$.
	Then the following system of 4 equations holds:
	\begin{flushleft}
		$\ds \frac1{(n+2)(m+2)}A_\beta B_\beta
			-\frac1{(n+1)(m+1)}B_\beta A_\beta
			-\frac1{(n+1)(n+2)(m+1)}B_{\alpha+\beta}A_{\alpha+\beta}-$
	\end{flushleft}
	\begin{flushright}
		$\ds -\frac1{(n+1)(m+1)(m+2)}B_{\beta+\gamma}A_{\beta+\gamma}
			-\frac1{(n+1)(n+2)(m+1)(m+2)}
			B_{\alpha+\beta+\gamma}A_{\alpha+\beta+\gamma}=\frac12(k-n-m)$
	\end{flushright}
	\begin{flushleft}
		$\ds \frac1{(n+1)(m+2)}A_{\alpha+\beta}B_{\alpha+\beta}
			-\frac1{(n+2)(m+1)}B_{\alpha+\beta}A_{\alpha+\beta}+$
	\end{flushleft}
	\begin{flushright}
		$\ds +\frac1{(n+1)(n+2)(m+2)}A_\beta B_\beta-\frac1{(n+2)(m+1)(m+2)}
			B_{\alpha+\beta+\gamma}A_{\alpha+\beta+\gamma}=\frac12(k+n-m)$
	\end{flushright}
	\begin{flushleft}
		$\ds \frac1{(n+2)(m+1)}A_{\beta+\gamma}B_{\beta+\gamma}
			-\frac1{(n+1)(m+2)}B_{\beta+\gamma}A_{\beta+\gamma}+$
	\end{flushleft}
	\begin{flushright}
		$\ds +\frac1{(n+2)(m+1)(m+2)}A_\beta B_\beta-\frac1{(n+1)(n+2)(m+2)}
			B_{\alpha+\beta+\gamma}A_{\alpha+\beta+\gamma}=\frac12(k-n+m)$
	\end{flushright}
	\begin{flushleft}
		$\ds \frac1{(n+1)(m+1)}A_{\alpha+\beta+\gamma}B_{\alpha+\beta+\gamma}
			-\frac1{(n+2)(m+2)}B_{\alpha+\beta+\gamma}A_{\alpha+\beta+\gamma}
			+\frac1{(n+1)(m+1)(m+2)}A_{\alpha+\beta}B_{\alpha+\beta}+$
	\end{flushleft}
	\begin{flushright}
		$\ds +\frac1{(n+1)(n+2)(m+1)}A_{\beta+\gamma}B_{\beta+\gamma}
			+\frac1{(n+1)(n+2)(m+1)(m+2)}A_\beta B_\beta=\frac12(k+n+m)$
	\end{flushright}
\end{proposition}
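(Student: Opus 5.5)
The plan is to express each composition $A_\delta B_\delta$ and $B_\delta A_\delta$, restricted to highest weight vectors, through quadratic expressions $Y_\delta X_\delta$ in the original root vectors. I would then use the commutators $[X_\delta,Y_\delta]=H_\delta$ and the action of the Cartan elements on $v$ to eliminate everything except four scalars. These are
\[ H_\beta v,\quad (H_\alpha+H_\beta)v,\quad (H_\beta+H_\gamma)v,\quad (H_\alpha+H_\beta+H_\gamma)v. \]
Since $H_0=H_\alpha+2H_\beta+H_\gamma$ acts by $k$, we have $H_\beta v=\frac12(k-n-m)v$, $(H_\alpha+H_\beta)v=\frac12(k+n-m)v$, $(H_\beta+H_\gamma)v=\frac12(k-n+m)v$ and $(H_\alpha+H_\beta+H_\gamma)v=\frac12(k+n+m)v$. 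These are exactly the right-hand sides of the four equations. So the $j$-th equation should be the statement $[X_\delta,Y_\delta]v=H_\delta v$ for $\delta=\beta,\alpha+\beta,\beta+\gamma,\alpha+\beta+\gamma$ respectively, rewritten in terms of $A_*$ and $B_*$.

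Concretely, for $\delta=\beta$ I would write $X_\beta Y_\beta v-Y_\beta X_\beta v=H_\beta v$ and replace $Y_\beta=B_\beta$. For $X_\beta$ I would substitute the formula of Proposition \ref{xyizab}, item 4. Acting on the highest weight vector $B_\beta v$ of weight $(n-1,k-2,m-1)$, the terms containing $Y_\alpha$ or $Y_\gamma$ produce vectors that are not highest weight vectors. I would then apply the projection onto highest weight vectors, using Theorem \ref{thinpr}: taking the inner product with $v$, or equivalently noticing that both sides of the equation are multiples of $v$ modulo non-highest components. This makes those terms disappear on the left side $X_\beta Y_\beta v$, leaving $A_\beta B_\beta v/((n+1-1+\dots))$; the exact denominators are fixed by \eqref{commfun}, giving $\frac1{(n+2)(m+2)}$ after the shifts, as in the paper's conventions.

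The term $Y_\beta X_\beta v$ is the delicate one. Here $X_\beta v$ is not itself a highest weight vector. By Proposition \ref{xyizab} it decomposes as
\[ \frac{1}{(n+1)(m+1)}\left(A_\beta-Y_\alpha A_{\alpha+\beta}\cdot\,+\,Y_\gamma A_{\beta+\gamma}-Y_\alpha Y_\gamma A_{\alpha+\beta+\gamma}\right)v. \]
Applying $Y_\beta=B_\beta$ and projecting to the highest weight component of weight $(n,k,m)$ requires commuting $Y_\beta$ past $Y_\alpha$ and $Y_\gamma$. This uses $[Y_\beta,Y_\alpha]=\pm Y_{\alpha+\beta}$, after which one rewrites $Y_{\alpha+\beta}$ via item 6 of Proposition \ref{xyizab}, and similarly for $\gamma$. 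After that, the highest weight part of $Y_\alpha Y_{\alpha+\beta}A_{\alpha+\beta}v$ must be extracted. For this I would use $X_\alpha Y_\alpha w=(H_\alpha)w$ on the highest weight vector $w$: the highest weight component of $Y_\alpha(\cdot)$ vanishes unless it is paired with an $X_\alpha$. Equivalently, I would compute $(Y_\beta X_\beta v,v)=-(X_\beta v,X_\beta v)$ using norms of $Y_\alpha$-descendants, $\|Y_\alpha w\|^2=(H_\alpha w,w)$. In the non-unitary setting I would instead carry out the commutator manipulation algebraically. This produces the terms $\frac{1}{(n+1)(n+2)(m+1)}B_{\alpha+\beta}A_{\alpha+\beta}$, its $\gamma$-analogue, and the $B_{\alpha+\beta+\gamma}A_{\alpha+\beta+\gamma}$ term with the product of all four factors.

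The other three equations follow the same template, with the roles of $X$ and $Y$ interchanged where the formulas of Proposition \ref{xyizab} put the $Y_\alpha$, $Y_\gamma$ corrections. This explains, for instance, why the second equation contains $A_\beta B_\beta$ and $B_{\alpha+\beta+\gamma}A_{\alpha+\beta+\gamma}$ as correction terms. I expect the main obstacle to be precisely this bookkeeping: identifying the highest weight component of expressions like $B_\beta Y_\alpha A_{\alpha+\beta}v$ and getting the exact rational coefficients and signs. I would check each coefficient against the special case $n=m=0$, where many terms vanish, and against Theorem \ref{qrel} as a consistency test.
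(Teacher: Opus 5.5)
Your strategy is correct, and it takes a genuinely different route from the paper: in effect, you run the paper's computation backwards.

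The paper starts from the left-hand side and substitutes Definition~\ref{def1} for every $A_\delta$, $B_\delta$. It then normal-orders everything into $YXH$ form using $X_\alpha v=X_\gamma v=0$, and checks by hand that all the $YX$, $YYX$ and $YYYX$ terms cancel exactly, leaving $X_\beta Y_\beta-Y_\beta X_\beta=H_\beta$. This gives an exact identity in $U(\g)$ applied to $v$ and needs no extra structure. The price is long bookkeeping, and the right-hand side appears only at the very end.

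You start from $[X_\delta,Y_\delta]v=H_\delta v$, insert the inverse formulas of Proposition~\ref{xyizab}, and project onto the $(n,k,m)$-isotypic component of $V$. This projection commutes with $Y_\alpha$ and $Y_\gamma$, and that component has no vector of $H_\alpha$-weight $n+2$ or $H_\gamma$-weight $m+2$. Hence every term of the form $Y_\alpha(\cdot)$ or $Y_\gamma(\cdot)$ of the relevant weight projects to zero. Only a few commutators are then needed:
\begin{itemize}
\item for the first equation, $[Y_\beta,Y_\alpha]=Y_{\alpha+\beta}$, $[Y_\beta,Y_\gamma]=-Y_{\beta+\gamma}$ and $[Y_{\alpha+\beta},Y_\gamma]=-Y_{\alpha+\beta+\gamma}$;
\item for the other three, $[X_{\alpha+\beta},Y_\alpha]=-X_\beta$, $[X_{\beta+\gamma},Y_\gamma]=X_\beta$, $[Y_{\beta+\gamma},Y_\alpha]=Y_{\alpha+\beta+\gamma}$, $[X_{\alpha+\beta+\gamma},Y_\alpha]=-X_{\beta+\gamma}$ and $[X_{\alpha+\beta+\gamma},Y_\gamma]=X_{\alpha+\beta}$.
\end{itemize}
Carrying this out reproduces all four equations with exactly the stated coefficients. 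Your version is much shorter and explains why the right-hand sides are the eigenvalues of $[X_\delta,Y_\delta]$ on $v$. It also shows why the left-hand side is a multiple of $v$ even when the multiplicity exceeds one: it is precisely the isotypic projection of $[X_\delta,Y_\delta]v$. The cost is the extra input of the $K$-isotypic decomposition.

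Three details need fixing when you write it out.
\begin{enumerate}
\item $B_\beta$ raises $n$ and $m$: $B_\beta v$ has weight $(n+1,k-2,m+1)$, not $(n-1,k-2,m-1)$. This is exactly why $X_\beta B_\beta v$ projects to $\frac1{(n+2)(m+2)}A_\beta B_\beta v$.
\item Item~4 of Proposition~\ref{xyizab} gives
\[
X_\beta v=\frac1{(n+1)(m+1)}\bigl(A_\beta+Y_\alpha A_{\alpha+\beta}-Y_\gamma A_{\beta+\gamma}-Y_\alpha Y_\gamma A_{\alpha+\beta+\gamma}\bigr)v .
\]
For the two middle terms you used the signs from Definition~\ref{def1} instead. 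That would flip the signs of the $B_{\alpha+\beta}A_{\alpha+\beta}$ and $B_{\beta+\gamma}A_{\beta+\gamma}$ terms in the first equation.
\item Extract the $(n,k,m)$ component with the $K$-isotypic projection, not by pairing with $v$ via Theorem~\ref{thinpr}. Pairing needs unitarity and recovers the vector identity only when the weight has multiplicity one. The projection is purely algebraic and gives the vector identity for an arbitrary highest weight vector $v$, which is what the statement asserts.
\end{enumerate}
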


\begin{remark}
	We wrote $n$ instead of $H_\alpha$ to emphasize that these four relations
	form a system of four equations in eight unknowns. If the multiplicity of
	the weight  $(n,k,m)$ is equal to one and some of the unknowns are known,
	the system can be solved.
	For simplicity, we wrote $\frac12(k-n-m)$ instead of $\frac12(k-n-m)I$, although the latter is slightly more precise since $I$ denotes the identity operator.
	One detail should be noticed. If the multiplicity is bigger than 1, the
	expressions $A_*B_*v$ and $B_*A_*v$ do not have to be multiples of $v$.
	However, the left-hand side of each equation is a multiple of $v$ according
	to the right-hand side. It means that all "additional pieces" cancel out
	on the left-hand side of each equation.
\end{remark}

\begin{proof}
	We will prove only the first equation, since the proofs of the remaining
	three equations are analogous.
	We start with the left-hand side of the equation, then express operators
	$A_*$ and $B_*$ in terms of $X_*$ and $Y_*$ and write all terms in the
	form $YXH$. In the end, most of the terms will cancel out.
	\begin{flushleft}
		$\ds \frac1{(n+2)(m+2)}A_\beta B_\beta
		-\frac1{(n+1)(m+1)}B_\beta A_\beta
		-\frac1{(n+1)(n+2)(m+1)}B_{\alpha+\beta}A_{\alpha+\beta}$
	\end{flushleft}
	\begin{flushright}
		$\ds -\frac1{(n+1)(m+1)(m+2)}B_{\beta+\gamma}A_{\beta+\gamma}
		-\frac1{(n+1)(n+2)(m+1)(m+2)}
		B_{\alpha+\beta+\gamma}A_{\alpha+\beta+\gamma}=$
	\end{flushright}
	\begin{flushleft}
		$\ds =\left(X_\beta-Y_\alpha X_{\alpha+\beta}\frac1{n+1}+
		Y_\gamma X_{\beta+\gamma}\frac1{m+1}-Y_\alpha Y_\gamma
		X_{\alpha+\beta+\gamma}\frac1{(n+1)(m+1)}\right)Y_\beta-$
	\end{flushleft}
	\begin{flushright}
		$\ds -Y_\beta\left(X_\beta-Y_\alpha X_{\alpha+\beta}\frac1{n+1}+
		Y_\gamma X_{\beta+\gamma}\frac1{m+1}-Y_\alpha Y_\gamma
		X_{\alpha+\beta+\gamma}\frac1{(n+1)(m+1)}\right)-$
	\end{flushright}
	\begin{flushright}
		$\ds -\left(Y_{\alpha+\beta}+Y_\alpha Y_\beta\frac1{n+1}\right)
		\left(X_{\alpha+\beta}+Y_\gamma X_{\alpha+\beta+\gamma}
		\frac1{m+1}\right)\frac1{n+1}-$
	\end{flushright}
	\begin{flushright}
		$\ds -\left(Y_{\beta+\gamma}-Y_\gamma Y_\beta\frac1{m+1}\right)
		\left(X_{\beta+\gamma}-Y_\alpha X_{\alpha+\beta+\gamma}
		\frac1{n+1}\right)\frac1{m+1}-$
	\end{flushright}
	\begin{flushright}
		$\ds -\left(Y_{\alpha+\beta+\gamma}+
		Y_\alpha Y_{\beta+\gamma}\frac1{n+1}-Y_\gamma Y_{\alpha+\beta}
		\frac1{m+1}-Y_\alpha Y_\gamma Y_\beta\frac1{(n+1)(m+1)}\right)
		X_{\alpha+\beta+\gamma}\frac1{(n+1)(m+1)}=$
	\end{flushright}
	\begin{flushleft}
		$\ds =X_\beta Y_\beta-Y_\alpha Y_\beta X_{\alpha+\beta}\frac1{n+2}+
		Y_\gamma Y_\beta X_{\beta+\gamma}\frac1{m+2}-Y_\alpha Y_\gamma Y_\beta
		X_{\alpha+\beta+\gamma}\frac1{(n+2)(m+2)}-$
	\end{flushleft}
	\begin{flushright}
		$\ds -Y_\beta X_\beta+Y_\alpha Y_\beta X_{\alpha+\beta}\frac1{n+1}
		+Y_{\alpha+\beta}X_{\alpha+\beta}\frac1{n+1}-
		Y_\gamma Y_\beta X_{\beta+\gamma}\frac1{m+1}
		+Y_{\beta+\gamma}X_{\beta+\gamma}\frac1{m+1}+$
	\end{flushright}
	\begin{flushright}
		$\ds +\left(Y_\alpha Y_\gamma Y_\beta-Y_\alpha Y_{\beta+\gamma}
		+Y_\gamma Y_{\alpha+\beta}-Y_{\alpha+\beta+\gamma}\right)
		X_{\alpha+\beta+\gamma}\frac1{(n+1)(m+1)}-$
	\end{flushright}
	\begin{flushright}
		$\ds -Y_{\alpha+\beta}X_{\alpha+\beta}\frac1{n+1}
		-Y_\alpha Y_\beta X_{\alpha+\beta}\frac1{(n+1)(n+2)}-$
	\end{flushright}
	\begin{flushright}
		$\ds-Y_\gamma Y_{\alpha+\beta} X_{\alpha+\beta+\gamma}\frac1{(n+1)(m+1)}
		+Y_{\alpha+\beta+\gamma}X_{\alpha+\beta+\gamma}\frac1{(n+1)(m+1)}-$
	\end{flushright}
	\begin{flushright}
		$\ds -Y_\alpha Y_\gamma Y_\beta X_{\alpha+\beta+\gamma}
		\frac1{(n+1)(n+2)(m+1)}
		+Y_\alpha Y_{\beta+\gamma}X_{\alpha+\beta+\gamma}
		\frac1{(n+1)(n+2)(m+1)}-$
	\end{flushright}
	\begin{flushright}
		$\ds -Y_{\beta+\gamma}X_{\beta+\gamma}\frac1{m+1}
		+Y_\gamma Y_\beta X_{\beta+\gamma}\frac1{(m+1)(m+2)}+$
	\end{flushright}
	\begin{flushright}
		$\ds+Y_\alpha Y_{\beta+\gamma} X_{\alpha+\beta+\gamma}\frac1{(n+1)(m+1)}
		+Y_{\alpha+\beta+\gamma}X_{\alpha+\beta+\gamma}\frac1{(n+1)(m+1)}-$
	\end{flushright}
	\begin{flushright}
		$\ds -Y_\alpha Y_\gamma Y_\beta X_{\alpha+\beta+\gamma}
		\frac1{(n+1)(m+1)(m+2)}
		-Y_\gamma Y_{\alpha+\beta}X_{\alpha+\beta+\gamma}
		\frac1{(n+1)(m+1)(m+2)}-$
	\end{flushright}
	\begin{flushright}
		$\ds -Y_{\alpha+\beta+\gamma}X_{\alpha+\beta+\gamma}\frac1{(n+1)(m+1)}
		-Y_\alpha Y_{\beta+\gamma}X_{\alpha+\beta+\gamma}
		\frac1{(n+1)(n+2)(m+1)}+$
	\end{flushright}
	\begin{flushright}
		$\ds Y_\gamma Y_{\alpha+\beta}
		X_{\alpha+\beta+\gamma}\frac1{(n+1)(m+1)(m+2)}
		+Y_\alpha Y_\gamma Y_\beta
		X_{\alpha+\beta+\gamma}\frac1{(n+1)(n+2)(m+1)(m+2)}=$
	\end{flushright}
	Now, let us collect like terms.
	The term $Y_{\alpha+\beta+\gamma}X_{\alpha+\beta+\gamma}\frac1{(n+1)(m+1)}$
	appears four times, twice with a plus sign, twice with a minus sign
	and therefore cancels out.
	The term $Y_\alpha Y_\beta X_{\alpha+\beta}$ appears three times, with
	coefficients $-\frac1{n+2}$, $\frac1{n+1}$ and $-\frac1{(n+1)(n+2)}$
	and therefore cancels out.
	The term $Y_\gamma Y_\beta X_{\beta+\gamma}$ appears three times, with
	coefficients $\frac1{m+2}$, $-\frac1{m+1}$ and $\frac1{(m+1)(m+2)}$
	and therefore cancels out.
	The term $Y_{\alpha+\beta}X_{\alpha+\beta}\frac1{n+1}$
	appears twice, once with a plus sign, once with a minus sign
	and therefore cancels out.
	The term $Y_{\beta+\gamma}X_{\beta+\gamma}\frac1{m+1}$
	appears twice, once with a plus sign, once with a minus sign
	and therefore cancels out.
	The term $Y_\alpha Y_{\beta+\gamma}X_{\alpha\beta+\gamma}\frac1{(n+1)(m+1)}$
	appears twice, once with a minus sign, once with a plus sign
	and therefore cancels out.
	The term $Y_\gamma Y_{\alpha+\beta}X_{\alpha\beta+\gamma}\frac1{(n+1)(m+1)}$
	appears twice, once with a minus sign, once with a plus sign
	and therefore cancels out.
	The term $Y_\alpha Y_{\beta+\gamma}X_{\alpha\beta+\gamma}
	\frac1{(n+1)(n+2)(m+1)}$ appears twice, once with a plus sign, once with
	a minus sign and therefore cancels out.
	The term $Y_\gamma Y_{\alpha+\beta}X_{\alpha\beta+\gamma}
	\frac1{(n+1)(m+1)(m+2)}$ appears twice, once with a minus sign, once with
	a plus sign and therefore cancels out.
	The term $Y_\alpha Y_\gamma Y_\beta X_{\alpha\beta+\gamma}$ appears with
	coefficient $-\frac1{(n+2)(m+2)}$, $\frac1{(n+1)(m+1)}$,
	$-\frac1{(n+1)(n+2)(m+1)}$, $-\frac1{(n+1)(m+1)(m+2)}$ and
	$\frac1{(n+1)(n+2)(m+1)(m+2)}$ and their sum is equal to 0. It remains
	\begin{equation*}
		=X_\beta Y_\beta-Y_\beta X_\beta=H_\beta=\frac12(k-n-m)
	\end{equation*}
	and it proves the first formula.
\end{proof}

\begin{remark}\label{4to2}
	If $n=0$ then $B_{\beta+\gamma}A_{\beta+\gamma}=0$,
	$A_{\alpha+\beta}B_{\alpha+\beta}=0$,
	$A_{\alpha+\beta+\gamma}B_{\alpha+\beta+\gamma}=0$ and $B_\beta A_\beta=0$.
	It looks like that the situation is very determined. We have a system of
	four equations in four unknowns. However, our system reduces to
	\begin{flushleft}
		$\ds \frac1{2(m+2)}A_\beta B_\beta
		-\frac1{2(m+1)}B_{\alpha+\beta}A_{\alpha+\beta}-
		\frac1{2(m+1)(m+2)}
		B_{\alpha+\beta+\gamma}A_{\alpha+\beta+\gamma}=\frac12(k-m)$
	\end{flushleft}
	\begin{flushleft}
		$\ds -\frac1{2(m+1)}B_{\alpha+\beta}A_{\alpha+\beta}
		+\frac1{2(m+2)}A_\beta B_\beta-\frac1{2(m+1)(m+2)}
		B_{\alpha+\beta+\gamma}A_{\alpha+\beta+\gamma}=\frac12(k-m)$
	\end{flushleft}
	\begin{flushleft}
		$\ds \frac1{2(m+1)}A_{\beta+\gamma}B_{\beta+\gamma}
		+\frac1{2(m+1)(m+2)}A_\beta B_\beta-\frac1{2(m+2)}
		B_{\alpha+\beta+\gamma}A_{\alpha+\beta+\gamma}=\frac12(k+m)$
	\end{flushleft}
	\begin{flushleft}
		$\ds -\frac1{2(m+2)}B_{\alpha+\beta+\gamma}A_{\alpha+\beta+\gamma}
		+\frac1{2(m+1)}A_{\beta+\gamma}B_{\beta+\gamma}
			+\frac1{2(m+1)(m+2)}A_\beta B_\beta=\frac12(k+m)$.
	\end{flushleft}
	We see that the first two equations are equal and the last two equations
	are also equal. Hence, we have a system of two equations in four
	unknowns,
	\begin{equation*}
		\frac1{m+2}A_\beta B_\beta
		-\frac1{(m+1)(m+2)}B_{\alpha+\beta+\gamma}A_{\alpha+\beta+\gamma}
		-\frac1{m+1}B_{\alpha+\beta}A_{\alpha+\beta}=k-m
	\end{equation*}
	\begin{equation*}
		\frac1{(m+1)(m+2)}A_\beta B_\beta
		-\frac1{m+2}B_{\alpha+\beta+\gamma}A_{\alpha+\beta+\gamma}
		+\frac1{m+1}A_{\beta+\gamma}B_{\beta+\gamma}=k+m
	\end{equation*}
	Similarly if $m=0$ then $A_{\beta+\gamma}B_{\beta+\gamma}=0$,
	$B_{\alpha+\beta}A_{\alpha+\beta}=0$,
	$A_{\alpha+\beta+\gamma}B_{\alpha+\beta+\gamma}=0$ and $B_\beta A_\beta=0$.
	Our system reduces to
	\begin{flushleft}
		$\ds \frac1{(n+2)2}A_\beta B_\beta
			-\frac1{(n+1)2}B_{\beta+\gamma}A_{\beta+\gamma}-\frac1{(n+1)(n+2)2}
		B_{\alpha+\beta+\gamma}A_{\alpha+\beta+\gamma}=\frac12(k-n)$
	\end{flushleft}
	\begin{flushleft}
		$\ds \frac1{(n+1)2}A_{\alpha+\beta}B_{\alpha+\beta}
			+\frac1{(n+1)(n+2)2}A_\beta B_\beta-\frac1{(n+2)2}
		B_{\alpha+\beta+\gamma}A_{\alpha+\beta+\gamma}=\frac12(k+n)$
	\end{flushleft}
	\begin{flushleft}
		$\ds -\frac1{(n+1)2}B_{\beta+\gamma}A_{\beta+\gamma}
			+\frac1{(n+2)2}A_\beta B_\beta-\frac1{(n+1)(n+2)2}
			B_{\alpha+\beta+\gamma}A_{\alpha+\beta+\gamma}=\frac12(k-n)$
	\end{flushleft}
	\begin{flushleft}
		$\ds -\frac1{(n+2)2}B_{\alpha+\beta+\gamma}A_{\alpha+\beta+\gamma}
			+\frac1{(n+1)2}A_{\alpha+\beta}B_{\alpha+\beta}
			+\frac1{(n+1)(n+2)2}A_\beta B_\beta=\frac12(k+n)$
	\end{flushleft}
	Again, the first and the third equations are the same and the second and
	the fourth equations are the same. Hence, we have a system of two
	equations in four unknowns,
	\begin{equation*}
		\frac1{n+2}A_\beta B_\beta
		-\frac1{(n+1)(n+2)}B_{\alpha+\beta+\gamma}A_{\alpha+\beta+\gamma}
		-\frac1{n+1}B_{\beta+\gamma}A_{\beta+\gamma}=k-n
	\end{equation*}
	\begin{equation*}
		\frac1{(n+1)(n+2)}A_\beta B_\beta
		-\frac1{n+2}B_{\alpha+\beta+\gamma}A_{\alpha+\beta+\gamma}
		+\frac1{n+1}A_{\alpha+\beta}B_{\alpha+\beta}=k+n
	\end{equation*}
	Finally, if $n=m=0$ then all terms except $A_\beta B_\beta$ and
	$B_{\alpha+\beta+\gamma}A_{\alpha+\beta+\gamma}$ vanish and our system
	reduces to
	\begin{equation}\label{nmnula}
		A_\beta B_\beta-B_{\alpha+\beta+\gamma}A_{\alpha+\beta+\gamma}=2k
	\end{equation}
\end{remark}

\begin{proposition}
	Let $v$ be a highest weight vector of weight $(n,k,m)$. Then
	\begin{equation}\label{bad}
		B_{\alpha+\beta+\gamma}A_{\alpha+\beta+\gamma}Rv=
		RB_{\alpha+\beta+\gamma}A_{\alpha+\beta+\gamma}v
		+R(H_\alpha+H_\gamma+2)(H_\alpha+H_\beta+H_\gamma+1)v
	\end{equation}
	for $v$ satisfying $B_{\alpha+\beta+\gamma}v=0$
	and
	\begin{equation}\label{abu}
		A_\beta B_\beta Qv=QA_\beta B_\beta v+Q(H_\alpha+H_\gamma+2)(1-H_\beta)v
	\end{equation}
	for $v$ satisfying $A_\beta v=0$.
\end{proposition}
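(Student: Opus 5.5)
The plan is to derive both identities purely from the commutator relations already established, with no further computation in $U(\esl(4,\C))$. The relations used are Proposition~\ref{commab} (formulas \eqref{comma} and \eqref{commb}) and Proposition~\ref{commabqr}. All of these hold on $K$-highest weight vectors, and every operator involved maps highest weight vectors to highest weight vectors. So each intermediate vector below is again a highest weight vector, and we may apply the relations at every step. Factors such as $(H_\alpha+H_\beta+H_\gamma+1)$ written to the right of an operator act first, as scalars on $v$, so $v$ is only rescaled by them.

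For \eqref{bad}, write $A=A_{\alpha+\beta+\gamma}$ and $B=B_{\alpha+\beta+\gamma}$.
\begin{enumerate}
\item Item~4 of Proposition~\ref{commabqr} gives $ARv=RAv-B_\beta(H_\alpha+H_\beta+H_\gamma+1)v$.
\item Apply $B$ and use item~12 ($[B,R]=0$):
\[ BARv=RBAv-BB_\beta(H_\alpha+H_\beta+H_\gamma+1)v. \]
\item By \eqref{commb}, $BB_\beta=B_\beta B-R(H_\alpha+H_\gamma+2)$.
\item The vector $(H_\alpha+H_\beta+H_\gamma+1)v$ is a scalar multiple of $v$, and $Bv=0$ by hypothesis, so the term $B_\beta B$ contributes nothing.
\item What remains is $+R(H_\alpha+H_\gamma+2)(H_\alpha+H_\beta+H_\gamma+1)v$, which is exactly \eqref{bad}.
\end{enumerate}

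For \eqref{abu} the argument is dual.
\begin{enumerate}
\item Item~2 gives $B_\beta Qv=QB_\beta v+A_{\alpha+\beta+\gamma}(H_\beta-1)v$.
\item Apply $A_\beta$ and use item~9 ($[A_\beta,Q]=0$):
\[ A_\beta B_\beta Qv=QA_\beta B_\beta v+A_\beta A_{\alpha+\beta+\gamma}(H_\beta-1)v. \]
\item By \eqref{comma}, $A_\beta A_{\alpha+\beta+\gamma}=A_{\alpha+\beta+\gamma}A_\beta-Q(H_\alpha+H_\gamma+2)$.
\item The hypothesis $A_\beta v=0$ kills the first term.
\item What remains is $-Q(H_\alpha+H_\gamma+2)(H_\beta-1)v=Q(H_\alpha+H_\gamma+2)(1-H_\beta)v$, which is \eqref{abu}.
\end{enumerate}

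The main point needing care is bookkeeping of the Cartan factors. Each factor must be evaluated on the correct vector, namely $v$ itself, and not on the shifted vector. Because every $H$-factor stands to the right and acts first on $v$, it commutes past nothing. Only the ordering in \eqref{comma} and \eqref{commb} matters, and we take it exactly as stated there.
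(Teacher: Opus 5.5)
Your proof is correct and follows the paper's argument essentially step for step. The paper likewise reduces $[B_{\alpha+\beta+\gamma}A_{\alpha+\beta+\gamma},R]v$ (resp.\ $[A_\beta B_\beta,Q]v$) to $-B_{\alpha+\beta+\gamma}B_\beta(H_\alpha+H_\beta+H_\gamma+1)v$ (resp.\ $A_\beta A_{\alpha+\beta+\gamma}(H_\beta-1)v$) using items 12 and 4 (resp.\ 9 and 2) of Proposition~\ref{commabqr}, and then applies \eqref{commb} (resp.\ \eqref{comma}) together with $B_{\alpha+\beta+\gamma}v=0$ (resp.\ $A_\beta v=0$); the only cosmetic difference is that you move $R$ (resp.\ $Q$) past one factor at a time, whereas the paper applies the Leibniz rule to the commutator of the product.
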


\begin{remark}\label{othcom}
Similar formulas can be derived for $A_\beta B_\beta Rv$ and
	$B_{\alpha+\beta+\gamma}A_{\alpha+\beta+\gamma}Qv$, but they will not be
	needed.
\end{remark}

\begin{proof}
	We start with
	\begin{equation*}
		[B_{\alpha+\beta+\gamma}A_{\alpha+\beta+\gamma},R]v=
		[B_{\alpha+\beta+\gamma},R]A_{\alpha+\beta+\gamma}v
		+B_{\alpha+\beta+\gamma}[A_{\alpha+\beta+\gamma},R]v.
	\end{equation*}
	Since $[B_{\alpha+\beta+\gamma},R]=0$ (by Proposition \ref{commabqr}) and
	$[A_{\alpha+\beta+\gamma},R]=-B_\beta(H_\alpha+H_\beta+H_\gamma+1)$ (again,
	by Proposition \ref{commabqr}),
	\begin{equation*}
		[B_{\alpha+\beta+\gamma}A_{\alpha+\beta+\gamma},R]v=
		-B_{\alpha+\beta+\gamma}B_\beta(H_\alpha+H_\beta+H_\gamma+1)v.
	\end{equation*}
	By Proposition \ref{commab}
	(and our assumption $B_{\alpha+\beta+\gamma}v=0$),
	\begin{equation*}
		R(H_\alpha+H_\gamma+2)=[B_\beta,B_{\alpha+\beta+\gamma}]=
		B_\beta B_{\alpha+\beta+\gamma}v-B_{\alpha+\beta+\gamma}B_\beta v=
		-B_{\alpha+\beta+\gamma}B_\beta v
	\end{equation*}
	and it produces
	\begin{equation*}
		[B_{\alpha+\beta+\gamma}A_{\alpha+\beta+\gamma},R]v=
		R(H_\alpha+H_\gamma+2)(H_\alpha+H_\beta+H_\gamma+1)v.
	\end{equation*}
	Now, \eqref{bad} follows. For the second formula we start with
	\begin{equation*}
		[A_\beta B_\beta,Q]v=[A_\beta,Q]B_\beta v+A_\beta[B_\beta,Q]v.
	\end{equation*}
	By Proposition \ref{commabqr} $[A_\beta,Q]=0$ and
	$[B_\beta,Q]=A_{\alpha+\beta+\gamma}(H_\beta-1)$. Hence
	\begin{equation*}
		[A_\beta B_\beta,Q]v=A_\beta A_{\alpha+\beta+\gamma}(H_\beta-1).
	\end{equation*}
	By Proposition \ref{commab} and our assumption $A_\beta v=0$,
	\begin{equation*}
		A_\beta A_{\alpha+\beta+\gamma}(H_\beta-1)v=
		(A_\beta A_{\alpha+\beta+\gamma}-A_{\alpha+\beta+\gamma}A_\beta)
		(H_\beta-1)v=-Q(H_\alpha+H_\gamma+2)(H_\beta-1)v
	\end{equation*}
	Hence,
	\begin{equation*}
		[A_\beta B_\beta,Q]v=Q(H_\alpha+H_\gamma+2)(1-H_\beta)v
	\end{equation*}
	and \eqref{abu} follows.
\end{proof}

\begin{proposition}\label{abbcmnul}
	Let $v$ be a highest weight vector of weight $(n,k,0)$. Then
	\begin{equation*}
		B_{\beta+\gamma}A_{\beta+\gamma}Rv=
		RB_{\beta+\gamma}A_{\beta+\gamma}v+RH_\alpha H_\beta v
	\end{equation*}
	and
	\begin{equation*}
		A_{\alpha+\beta}B_{\alpha+\beta}Qv=
		QA_{\alpha+\beta}B_{\alpha+\beta}v-QH_\alpha(H_\alpha+H_\beta) v
	\end{equation*}
	Similar relations can be written for $B_{\beta+\gamma}A_{\beta+\gamma}Qv$
	and $A_{\alpha+\beta}B_{\alpha+\beta}Rv$.
\end{proposition}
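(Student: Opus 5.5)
I will imitate the proof of \eqref{bad} and \eqref{abu}, now using the root pairs $\alpha+\beta$ and $\beta+\gamma$. For the first identity, the Leibniz rule gives
\[ [B_{\beta+\gamma}A_{\beta+\gamma},R]v=[B_{\beta+\gamma},R]A_{\beta+\gamma}v+B_{\beta+\gamma}[A_{\beta+\gamma},R]v. \]
By Proposition \ref{commabqr}, item 16, the first term vanishes. By item 8, $[A_{\beta+\gamma},R]=B_{\alpha+\beta}(H_\beta+H_\gamma)$. Since $m=0$, this leaves
\[ [B_{\beta+\gamma}A_{\beta+\gamma},R]v=B_{\beta+\gamma}B_{\alpha+\beta}H_\beta v. \]

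The task is now to rewrite $B_{\beta+\gamma}B_{\alpha+\beta}v$ in terms of $R$. Proposition \ref{commab} gives
\[ B_{\alpha+\beta}B_{\beta+\gamma}-B_{\beta+\gamma}B_{\alpha+\beta}=R(H_\gamma-H_\alpha). \]
On a vector of weight $(n,k,0)$, the operator $B_{\beta+\gamma}$ would lower $m$ to $-1$, so $B_{\beta+\gamma}v=0$. This replaces the hypothesis $B_{\alpha+\beta+\gamma}v=0$ used in the proof of \eqref{bad}. It follows that $-B_{\beta+\gamma}B_{\alpha+\beta}v=R(0-n)v$, that is, $B_{\beta+\gamma}B_{\alpha+\beta}v=RH_\alpha v$.

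Inserting this, and noting that $H_\beta$ is a scalar on $v$ (so it commutes past the $B$'s as a number), gives $[B_{\beta+\gamma}A_{\beta+\gamma},R]v=RH_\alpha H_\beta v$. This is the first claim. The only point to check carefully is the order of $H$'s: in $B_{\alpha+\beta}(H_\beta+H_\gamma)$ the Cartan factor acts first on $v$, so it is evaluated at $v$'s weight, giving $H_\beta$ since $H_\gamma v=0$.

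For the second identity I proceed in the same way:
\[ [A_{\alpha+\beta}B_{\alpha+\beta},Q]v=[A_{\alpha+\beta},Q]B_{\alpha+\beta}v+A_{\alpha+\beta}[B_{\alpha+\beta},Q]v. \]
The first term vanishes by item 13. Item 5 gives $[B_{\alpha+\beta},Q]=-A_{\beta+\gamma}(H_\alpha+H_\beta)$, so the expression equals $-A_{\alpha+\beta}A_{\beta+\gamma}(H_\alpha+H_\beta)v$. Here $A_{\alpha+\beta}$ shifts $m$ by $-1$, so $A_{\alpha+\beta}v=0$ when $m=0$. Proposition \ref{commab} then gives
\[ A_{\alpha+\beta}A_{\beta+\gamma}v=[A_{\alpha+\beta},A_{\beta+\gamma}]v=Q(H_\alpha-H_\gamma)v=QH_\alpha v. \]
This yields $-QH_\alpha(H_\alpha+H_\beta)v$, as claimed.

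The main obstacle is justifying the vanishing statements $B_{\beta+\gamma}v=0$ and $A_{\alpha+\beta}v=0$ at $m=0$. I would check them directly from Definition \ref{def1}. For instance, $A_{\alpha+\beta}v=X_{\alpha+\beta}v+Y_\gamma X_{\alpha+\beta+\gamma}v$. The $H_\gamma$-weight of the result is $-1$, which is impossible for a $K$-highest weight vector, and the definition guarantees that $A_{\alpha+\beta}v$ is highest by Remark \ref{remhigh}. Hence it is zero; $B_{\beta+\gamma}v$ is handled the same way. A secondary bookkeeping issue is the placement of the Cartan factors in items 5 and 8 of Proposition \ref{commabqr} and in Proposition \ref{commab}, since they sit to the right and are therefore evaluated on $v$ itself.
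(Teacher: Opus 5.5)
Your proposal is correct and follows the paper's proof essentially verbatim. Both expand the commutator by the Leibniz rule and use items 16 and 8 (resp.\ 13 and 5) of Proposition~\ref{commabqr}, then finish with Proposition~\ref{commab} together with $B_{\beta+\gamma}v=0$ (resp.\ $A_{\alpha+\beta}v=0$) at $m=0$. You go a bit further than the paper: you write out the second identity, which the paper dismisses as similar, and you justify the vanishing statements by the $H_\gamma$-weight $-1$ argument, which the paper uses without comment.
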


\begin{proof}
	It suffices to prove the first relation, as the second follows by a similar
	argument. By Propositions \ref{commabqr} and \ref{commab}
	(and since $B_{\beta+\gamma}v=0$)
	\begin{equation*}
		[B_{\beta+\gamma}A_{\beta+\gamma},R]v=
		B_{\beta+\gamma}[A_{\beta+\gamma},R]v=
		B_{\beta+\gamma}B_{\alpha+\beta}H_\beta v=
		[B_{\beta+\gamma},B_{\alpha+\beta}]H_\beta v=
		RH_\alpha H_\beta v.
	\end{equation*}
	Now, the first relation follows.
\end{proof}

We omit the proof of the following proposition, since it is analogous to the
proof of the previous proposition.

\begin{proposition}\label{abbcnnul}
	Let $v$ be a highest weight vector of weight $(0,k,m)$. Then
	\begin{equation*}
		B_{\alpha+\beta}A_{\alpha+\beta}Rv=
		RB_{\alpha+\beta}A_{\alpha+\beta}v+RH_\gamma H_\beta v
	\end{equation*}
	and
	\begin{equation*}
		A_{\beta+\gamma}B_{\beta+\gamma}Qv=
		QA_{\beta+\gamma}B_{\beta+\gamma}v-QH_\gamma(H_\beta+H_\gamma) v
	\end{equation*}
	Similar relations can be written for $B_{\alpha+\beta}A_{\alpha+\beta}Qv$
	and $A_{\beta+\gamma}B_{\beta+\gamma}Rv$.
\end{proposition}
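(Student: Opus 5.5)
The plan is to mirror the proof of Proposition \ref{abbcmnul}, exchanging the roles of $\alpha$ and $\gamma$ (and hence of $\alpha+\beta$ and $\beta+\gamma$). Throughout, a product such as $B_{\beta+\gamma}(H_\alpha+H_\beta)$ is read as in Proposition \ref{commabqr}: the Cartan factor acts first, on $v$, and gives a scalar.

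The one new input is a vanishing statement. I claim $B_{\alpha+\beta}v=0$ and $A_{\beta+\gamma}v=0$ whenever $v$ has weight $(0,k,m)$. Indeed, by the shifts listed in the proof of Proposition \ref{parity}, both operators send a highest weight vector of weight $(n,k,m)$ to one of weight $(n-1,\ast,\ast)$. Highest weights satisfy $n\geq0$, so for $n=0$ the image is $0$. On such $v$ we also have $H_\alpha v=0$.

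For the first relation I start from the derivation identity
\[ [B_{\alpha+\beta}A_{\alpha+\beta},R]v=[B_{\alpha+\beta},R]A_{\alpha+\beta}v+B_{\alpha+\beta}[A_{\alpha+\beta},R]v. \]
By item 15 of Proposition \ref{commabqr} the first term vanishes. By item 7, the second term equals $B_{\alpha+\beta}B_{\beta+\gamma}(H_\alpha+H_\beta)v=B_{\alpha+\beta}B_{\beta+\gamma}H_\beta v$, using $H_\alpha v=0$. Since $B_{\alpha+\beta}v=0$, we may subtract $B_{\beta+\gamma}B_{\alpha+\beta}H_\beta v=0$. This turns the expression into
\[ [B_{\alpha+\beta},B_{\beta+\gamma}]H_\beta v=R(H_\gamma-H_\alpha)H_\beta v=RH_\gamma H_\beta v \]
by Proposition \ref{commab}. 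Moving $RB_{\alpha+\beta}A_{\alpha+\beta}v$ to the other side gives the first relation.

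For the second relation I write
\[ [A_{\beta+\gamma}B_{\beta+\gamma},Q]v=[A_{\beta+\gamma},Q]B_{\beta+\gamma}v+A_{\beta+\gamma}[B_{\beta+\gamma},Q]v. \]
Item 14 kills the first term. Item 6 turns the second into $-A_{\beta+\gamma}A_{\alpha+\beta}(H_\beta+H_\gamma)v$. Since $A_{\beta+\gamma}v=0$, this equals $-[A_{\beta+\gamma},A_{\alpha+\beta}](H_\beta+H_\gamma)v$. By Proposition \ref{commab} it becomes $Q(H_\alpha-H_\gamma)(H_\beta+H_\gamma)v=-QH_\gamma(H_\beta+H_\gamma)v$, again using $H_\alpha v=0$. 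This is the second relation.

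The only delicate point is bookkeeping of where the Cartan factors are evaluated. In the commutator formulas of Propositions \ref{commab} and \ref{commabqr}, the $H$'s must act on $v$ itself, not on the shifted vector, so they are replaced by the values $n=0$, $m$, and $\frac12(k-m)$. I would double-check this against the convention in Proposition \ref{abbcmnul}, where the same substitution produced $RH_\alpha H_\beta v$. The ``similar relations'' for $B_{\alpha+\beta}A_{\alpha+\beta}Qv$ and $A_{\beta+\gamma}B_{\beta+\gamma}Rv$ would be handled the same way, using the remaining items of Proposition \ref{commabqr}.
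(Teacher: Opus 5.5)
Your proof is correct and is the argument the paper intends. The paper omits this proof as analogous to that of Proposition~\ref{abbcmnul}, which uses the same steps with $\alpha$ and $\gamma$ interchanged: the derivation identity for $[BA,R]$ (resp.\ $[AB,Q]$), items 6, 7, 14 and 15 of Proposition~\ref{commabqr}, the vanishing $B_{\alpha+\beta}v=A_{\beta+\gamma}v=0$ at $n=0$, and Proposition~\ref{commab} evaluated with $H_\alpha v=0$.
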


Now, let us investigate our coefficients $A_{\alpha+\beta}B_{\alpha+\beta}$,
$B_{\alpha+\beta}A_{\alpha+\beta}$, $A_{\beta+\gamma}B_{\beta+\gamma}$ and
$B_{\beta+\gamma}A_{\beta+\gamma}$ in another special situation.
Let us consider a $(\g,K)$ module whose highest weights $(n,k,m)$ all satisfy
\begin{equation}\label{bign}
	n+m\geq N
\end{equation}
for some $N\geq2$. We know that, in that case, the multiplicity of
every highest weight satisfying $n+m=N$ is equal to 1. Hence, our coefficients
are well defined.

Propositions \ref{abbcmnul} and \ref{abbcnnul} tells us that one coefficient,
say, $A_{\alpha+\beta}B_{\alpha+\beta}$, for $n=N$ and $m=0$ determines all
other coefficients satisfying $n=M$ and $m=0$. Now, let us go inside of the
region satisfying $n+m=N$.

We begin with a technical lemma. This lemma also provides a convenient choice of
vectors.

\begin{lemma}
	Let $u$ be a highest weight vector of weight $(N,k,0)$. Then
	\begin{equation}\label{npmbasis}
		A_{\alpha+\beta}A_{\beta+\gamma}^ju=(N-j+1)jA_{\beta+\gamma}^{j-1}Qu.
	\end{equation}
	for $j\in\{0,\ldots,N\}$.
\end{lemma}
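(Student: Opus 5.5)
The plan is to prove \eqref{npmbasis} by expanding the left-hand side with the commutator from Proposition \ref{commab}, pushing $A_{\alpha+\beta}$ to the right through the powers of $A_{\beta+\gamma}$. All operators involved ($A_{\alpha+\beta}$, $A_{\beta+\gamma}$, $Q$) map highest weight vectors of $K$-types to highest weight vectors. Hence the identities of Propositions \ref{commab} and \ref{commabqr}, which hold on $K$-highest vectors, may be applied at each stage.

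\textbf{Step 1 (base case and vanishing).} First I note that $A_{\alpha+\beta}u=0$. Indeed, $A_{\alpha+\beta}$ shifts the highest weight by $(1,2,-1)$, so $A_{\alpha+\beta}u$ would have weight $(N+1,k+2,-1)$. A highest weight with $m=-1$ is impossible. (Equivalently, by \eqref{abt} the component $v(n+1,m-1)$ is absent when $m=0$.) This gives the case $j=0$, where the right-hand side is $0$ because of the factor $j$.

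\textbf{Step 2 (telescoping the commutator).} For $j\ge1$ I write
\[ A_{\alpha+\beta}A_{\beta+\gamma}^ju=\sum_{i=0}^{j-1}A_{\beta+\gamma}^{i}\,[A_{\alpha+\beta},A_{\beta+\gamma}]\,A_{\beta+\gamma}^{j-1-i}u+A_{\beta+\gamma}^jA_{\alpha+\beta}u . \]
The last term vanishes by Step 1. The vector $w_l=A_{\beta+\gamma}^{l}u$ is a highest weight vector of weight $(N-l,k+2l,l)$, since $A_{\beta+\gamma}$ shifts by $(-1,2,1)$. By Proposition \ref{commab}, $[A_{\alpha+\beta},A_{\beta+\gamma}]w_l=Q(H_\alpha-H_\gamma)w_l=(N-2l)Qw_l$. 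Next I use relation 14 of Proposition \ref{commabqr}, $[A_{\beta+\gamma},Q]=0$, to move $Q$ to the left of all the powers of $A_{\beta+\gamma}$. This gives $A_{\beta+\gamma}^{i}QA_{\beta+\gamma}^{l}u=A_{\beta+\gamma}^{j-1}Qu$ for $i+l=j-1$. Hence
\[ A_{\alpha+\beta}A_{\beta+\gamma}^ju=\Big(\sum_{l=0}^{j-1}(N-2l)\Big)A_{\beta+\gamma}^{j-1}Qu=\big(jN-j(j-1)\big)A_{\beta+\gamma}^{j-1}Qu=(N-j+1)j\,A_{\beta+\gamma}^{j-1}Qu. \]

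\textbf{Main obstacle.} The only delicate point is bookkeeping of where the Cartan factor $(H_\alpha-H_\gamma)$ is evaluated. In Proposition \ref{commab} it stands to the right of $Q$, so it is evaluated on $w_l$ before $Q$ acts, giving $N-2l$. It is not evaluated on $Qw_l$, where $Q$'s shift $(0,4,0)$ would not change $H_\alpha-H_\gamma$ anyway. A second point to check is that the restriction $j\le N$ keeps all weights $(N-l,k+2l,l)$ legitimate with nonnegative $n$. For $j>N$ the identity is still formally consistent, but it is not needed.
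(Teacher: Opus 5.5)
Your proposal is correct and is essentially the paper's argument. The paper inducts on $j$, with base case $A_{\alpha+\beta}u=0$ and a step that evaluates $[A_{\alpha+\beta},A_{\beta+\gamma}]=Q(H_\alpha-H_\gamma)$ on $A_{\beta+\gamma}^ju$ to get $(N-2j)+(N-j+1)j=(N-j)(j+1)$; your telescoping sum with $\sum_{l=0}^{j-1}(N-2l)=j(N-j+1)$ is that induction unrolled, and you make explicit the relation $[A_{\beta+\gamma},Q]=0$ that the paper uses tacitly to rewrite $QA_{\beta+\gamma}^ju$ as $A_{\beta+\gamma}^jQu$.
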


\begin{remark}
	Hence, our choice of vectors have the form $A_{\beta+\gamma}^jQ^lu$ for
	$j\in\{0,\ldots,N\}$ and $l\in\{0,1\}$. Since $A_{\beta+\gamma}$ and $Q$
	commute, this choice is very nice. In this Lemma, the action of the
	operator $A_{\alpha+\beta}$ is computed.
\end{remark}

\begin{proof}
	The proof is by induction on $j$. For $j=0$, the statement becomes
	\begin{equation*}
		A_{\alpha+\beta}u=0.
	\end{equation*}
	This is clear since the weight $u$ is $(N,k,0)$.
	Assume that the statement holds for $j$, and let us prove it for $j+1$.
	\begin{equation*}
		A_{\alpha+\beta}A_{\beta+\gamma}^{j+1}u=
		(A_{\alpha+\beta}A_{\beta+\gamma}-A_{\beta+\gamma}A_{\alpha+\beta}
		+A_{\beta+\gamma}A_{\alpha+\beta})A_{\beta+\gamma}^ju=
	\end{equation*}
	By Proposition \ref{commab},
	$[A_{\alpha+\beta},A_{\beta+\gamma}]=Q(H_\alpha-H_\gamma)=(N-j-j)Q=(N-2j)Q$
	and for the third term, we apply the induction hypothesis. Hence
	\begin{equation*}
		((N-2j)+(N-j+1)j)A_{\beta+\gamma}^jQu=
		(N(j+1)-2j-j^2+j)A_{\beta+\gamma}^jQu=
		(N-j)(j+1)A_{\beta+\gamma}^jQu
	\end{equation*}
	and this completes the induction step.
\end{proof}

\begin{proposition}\label{coefabbc}
	Let $v$ be a highest weight vector of weight $(N,k,0)$, $N\geq1$
	and set $\lambda$ by
	\begin{equation*}
		A_{\alpha+\beta}B_{\alpha+\beta}v=\lambda v.
	\end{equation*}
	Then
	\begin{equation*}\label{abj}
		A_{\alpha+\beta}B_{\alpha+\beta}(A_{\beta+\gamma}^jv)=
		\lambda\frac{(j+1)(N-j)}N(A_{\beta+\gamma}^jv).
	\end{equation*}
	for $j\in\{0,\ldots,N-1\}$. Similarly, if
	\begin{equation*}
		B_{\beta+\gamma}A_{\beta+\gamma}v=\mu v,
	\end{equation*}
	then
	\begin{equation}\label{baj}
		B_{\beta+\gamma}A_{\beta+\gamma}(B_{\alpha+\beta}^jv)=
		\mu\frac{(j+1)(N-j)}N(B_{\alpha+\beta}^jv).
	\end{equation}
	for $j\in\{0,\ldots,N-1\}$
\end{proposition}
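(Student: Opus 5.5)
The plan is a direct computation rather than an induction: commute $B_{\alpha+\beta}$ to the right, then push $A_{\alpha+\beta}$ through the power of $A_{\beta+\gamma}$ using Proposition~\ref{commab}, collecting the commutator terms. I will first settle the weights. By the weight shifts (Proposition~\ref{parity}), $A_{\beta+\gamma}$ sends $(n,k,m)$ to $(n-1,k+2,m+1)$. Hence $A_{\beta+\gamma}^jv$ has weight $(N-j,k+2j,j)$ and $B_{\alpha+\beta}v$ has weight $(N-1,k-2,1)$. All these weights satisfy $n+m=N$, so under \eqref{bign} they have multiplicity one and the coefficients are honest scalars.

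\emph{Step 1.} By Proposition~\ref{commabqr}(26), $[A_{\beta+\gamma},B_{\alpha+\beta}]=0$, so $B_{\alpha+\beta}A_{\beta+\gamma}^jv=A_{\beta+\gamma}^jB_{\alpha+\beta}v$. It therefore suffices to compute $A_{\alpha+\beta}A_{\beta+\gamma}^jw$ with $w=B_{\alpha+\beta}v$.

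\emph{Step 2.} Exactly as in the proof of Lemma~\eqref{npmbasis}, I telescope
\[
A_{\alpha+\beta}A_{\beta+\gamma}^jw=A_{\beta+\gamma}^jA_{\alpha+\beta}w+\sum_{i=0}^{j-1}A_{\beta+\gamma}^{i}\,[A_{\alpha+\beta},A_{\beta+\gamma}]\,A_{\beta+\gamma}^{j-1-i}w .
\]
By Proposition~\ref{commab}, $[A_{\alpha+\beta},A_{\beta+\gamma}]=Q(H_\alpha-H_\gamma)$. Put $s=j-1-i$. The vector $A_{\beta+\gamma}^{s}w$ has $H_\alpha-H_\gamma$ eigenvalue $(N-1-s)-(1+s)=N-2-2s$. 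Since $Q$ commutes with $A_{\beta+\gamma}$ (Proposition~\ref{commabqr}(14)), each summand equals $(N-2-2s)\,A_{\beta+\gamma}^{j-1}Qw$. Summing over $s=0,\dots,j-1$ gives $j(N-j-1)\,A_{\beta+\gamma}^{j-1}Qw$. The first term is $A_{\beta+\gamma}^jA_{\alpha+\beta}B_{\alpha+\beta}v=\lambda A_{\beta+\gamma}^jv$.

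\emph{Step 3 (key step).} I must identify $QB_{\alpha+\beta}v$, which has weight $(N-1,k+2,1)$, the same as $A_{\beta+\gamma}v$. By multiplicity one, $QB_{\alpha+\beta}v=c\,A_{\beta+\gamma}v$ for some scalar $c$. To find $c$, apply $A_{\alpha+\beta}$ to both sides.
\begin{itemize}
\item On the left, $[A_{\alpha+\beta},Q]=0$ (Proposition~\ref{commabqr}(13)) gives $A_{\alpha+\beta}QB_{\alpha+\beta}v=QA_{\alpha+\beta}B_{\alpha+\beta}v=\lambda Qv$.
\item On the right, Lemma~\eqref{npmbasis} with $j=1$ gives $A_{\alpha+\beta}A_{\beta+\gamma}v=NQv$.
\end{itemize}
Hence $c=\lambda/N$, provided $Qv\neq0$. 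Then $A_{\beta+\gamma}^{j-1}Qw=\frac{\lambda}{N}A_{\beta+\gamma}^jv$, and the total coefficient is
\[
\lambda\Big(1+\frac{j(N-j-1)}{N}\Big)=\lambda\,\frac{(j+1)(N-j)}{N},
\]
as claimed. The second formula \eqref{baj} follows by the mirror argument. There one uses $[A_{\beta+\gamma},B_{\alpha+\beta}]=0$, the relation $[B_{\alpha+\beta},B_{\beta+\gamma}]=R(H_\gamma-H_\alpha)$, the vanishing of $[B_{\beta+\gamma},R]$ and $[B_{\alpha+\beta},R]$, and a $B$-analogue of Lemma~\eqref{npmbasis}, namely $B_{\beta+\gamma}B_{\alpha+\beta}v=N\,Rv$ up to sign.

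The main obstacle is the degenerate case $Qv=0$ in Step 3, where $c$ is not determined. I would handle it separately as follows.
\begin{itemize}
\item If $Qv=0$ and $A_{\beta+\gamma}v\neq0$, then $NQv=A_{\alpha+\beta}A_{\beta+\gamma}v=0$. Since $Qw$ lies in the multiplicity-one line spanned by $A_{\beta+\gamma}v$, I expect to show, via the adjoint formulas of Proposition~\ref{star} and unitarity (or directly via Proposition~\ref{abbcmnul}), that $\lambda=0$ is forced. Then both sides vanish.
\item If $A_{\beta+\gamma}^jv=0$, the identity holds trivially.
\end{itemize}
I also need to double-check the sign conventions in Lemma~\eqref{npmbasis} against Proposition~\ref{commab}, since the final constant depends delicately on the exact value of $H_\alpha-H_\gamma$ at each stage.
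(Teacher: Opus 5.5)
Your main computation is correct whenever $Qv\neq0$, and it is organized differently from the paper's proof.

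\textbf{How the paper argues.} The paper uses induction on $j$ via $[A_{\alpha+\beta}B_{\alpha+\beta},A_{\beta+\gamma}]=Q(H_\alpha-H_\gamma)B_{\alpha+\beta}$. To evaluate the correction term, it writes $v=Qu$ with $u$ a highest weight vector of weight $(N,k-4,0)$ and applies \eqref{npmbasis} to $u$. It then transports the inductive coefficient from $A_{\beta+\gamma}^jv$ to $A_{\beta+\gamma}^{j+1}u$, which gives $B_{\alpha+\beta}A_{\beta+\gamma}^jv=\frac{\lambda}{N}A_{\beta+\gamma}^{j+1}u$.

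\textbf{How your route differs.} Your Step~3 identity $QB_{\alpha+\beta}v=\frac{\lambda}{N}A_{\beta+\gamma}v$ is just $Q$ applied to the case $j=0$ of that identity. You reach it by going up (applying $A_{\alpha+\beta}$ and cancelling $Qv$) rather than down to a $Q$-preimage. The telescoped sum then handles all $j$ at once.
\begin{itemize}
\item You use multiplicity one only at the single weight $(N-1,k+2,1)$.
\item You need no coefficient transport along the shifted diagonal.
\item You get directly that $A_{\beta+\gamma}^jv$ is an eigenvector.
\item Step~3 is needed only when $j(N-j-1)\neq0$, i.e.\ when $N\geq3$.
\end{itemize}
The price is the hypothesis $Qv\neq0$ in place of the paper's implicit $v\in\operatorname{im}Q$. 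So the two arguments break down at opposite ends of a $Q$-chain: yours at a top member, the paper's at a bottom member. In the mirror argument the sign is simply $B_{\beta+\gamma}B_{\alpha+\beta}v=NRv$.

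\textbf{The degenerate case aims at the wrong value.} Consider $Qv=0$ with $A_{\beta+\gamma}v\neq0$.
\begin{itemize}
\item Item~5 of Proposition~\ref{commabqr} gives outright $QB_{\alpha+\beta}v=B_{\alpha+\beta}Qv+A_{\beta+\gamma}(H_\alpha+H_\beta)v=\frac{N+k}{2}A_{\beta+\gamma}v$.
\item Your Step~2 then yields $A_{\alpha+\beta}B_{\alpha+\beta}A_{\beta+\gamma}^jv=\bigl(\lambda+j(N-j-1)\frac{N+k}{2}\bigr)A_{\beta+\gamma}^jv$.
\item So the claim is equivalent to $\lambda=\frac{N(N+k)}{2}$.
\end{itemize}
The value $\lambda=0$ is compatible with this only when $k=-N$. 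So ``$\lambda=0$ is forced and both sides vanish'' cannot be the mechanism in general.

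\textbf{The missing input} is Proposition~\ref{system} at the weight $(N-1,k+2,1)$.
\begin{itemize}
\item There $B_\beta A_\beta$ and $A_{\alpha+\beta+\gamma}B_{\alpha+\beta+\gamma}$ vanish by minimality of $n+m$.
\item $B_{\alpha+\beta}A_{\alpha+\beta}$ also vanishes, because $A_{\alpha+\beta}A_{\beta+\gamma}v=NQv=0$.
\item Eliminate $A_\beta B_\beta$, $B_{\beta+\gamma}A_{\beta+\gamma}$ and $B_{\alpha+\beta+\gamma}A_{\alpha+\beta+\gamma}$ from the four equations; the term $A_{\beta+\gamma}B_{\beta+\gamma}$ drops out. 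This forces $A_{\alpha+\beta}B_{\alpha+\beta}=(N-1)(N+k)$ at that weight.
\item Compare with your $j=1$ value $\lambda+(N-2)\frac{N+k}{2}$. This gives $\lambda=\frac{N(N+k)}{2}$, which closes the case.
\end{itemize}
The same issue arises for \eqref{baj}, with $Rv=0$ and item~8 of Proposition~\ref{commabqr}.
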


\begin{proof}
	It suffices to prove \eqref{abj}, as \eqref{baj} follows by a similar
	argument. The proof is by induction on $j$. For $j=0$, the statement becomes
	\begin{equation*}
		A_{\alpha+\beta}B_{\alpha+\beta}v=\lambda\frac NNv.
	\end{equation*}
	Assume that the statement holds for $j$, and let us prove it for $j+1$.
	Since (by Proposition \ref{commab})
	\begin{equation*}
		[A_{\alpha+\beta}B_{\alpha+\beta},A_{\beta+\gamma}]=
		Q(H_\alpha-H_\gamma)B_{\alpha+\beta}
	\end{equation*}
	it follows
	\begin{equation*}
		(A_{\alpha+\beta}B_{\alpha+\beta})A_{\beta+\gamma}=
		A_{\beta+\gamma}(A_{\alpha+\beta}B_{\alpha+\beta})
		+Q(H_\alpha-H_\gamma)B_{\alpha+\beta}.
	\end{equation*}
	Then we apply this relation to $A_{\beta+\gamma}^jv$,
	\begin{equation}\label{abind}
		(A_{\alpha+\beta}B_{\alpha+\beta})A_{\beta+\gamma}^{j+1}v=
		A_{\beta+\gamma}(A_{\alpha+\beta}B_{\alpha+\beta})A_{\beta+\gamma}^jv
		+Q(H_\alpha-H_\gamma)B_{\alpha+\beta}A_{\beta+\gamma}^jv.
	\end{equation}
	The left-hand side is the expression we need to compute.
	The first term on the right-hand side is obtained by applying the induction
	hypothesis. It remains to calculate the last term on the right-hand side,
	$Q(H_\alpha-H_\gamma)B_{\alpha+\beta}A_{\beta+\gamma}^jv$.
	The action of $B_{\alpha+\beta}$ on \eqref{npmbasis} (for $v=Qu$ and $j+1$)
	produces
	\begin{equation}\label{bq}
		B_{\alpha+\beta}A_{\alpha+\beta}A_{\beta+\gamma}^{j+1}u=
		B_{\alpha+\beta}(N-j)(j+1)A_{\beta+\gamma}^jQu.
	\end{equation}
	Observe that the equation
	\begin{equation*}
		B_{\alpha+\beta}A_{\alpha+\beta}A_{\beta+\gamma}^{j+1}u=
		\mu A_{\beta+\gamma}^{j+1}u,
	\end{equation*}
	is equivalent to
	\begin{equation*}
		A_{\alpha+\beta}B_{\alpha+\beta}A_{\beta+\gamma}^jv=
		\mu A_{\beta+\gamma}^jv
	\end{equation*}
	where the same value of $\mu$ appears in both equations.
	By induction hypothesis,
	\begin{equation*}
		\mu=\lambda\frac{(j+1)(N-j)}N.
	\end{equation*}
	and \eqref{bq} transforms to
	\begin{equation*}
		B_{\alpha+\beta}A_{\beta+\gamma}^jv=
		\frac{\lambda\frac{(j+1)(N-j)}N}{(N-j)(j+1)}A_{\beta+\gamma}^{j+1}u=
		\frac\lambda NA_{\beta+\gamma}^{j+1}u.
	\end{equation*}
	Since
	$(H_\alpha-H_\gamma)A_{\beta+\gamma}^{j+1}u=(N-2j-2)A_{\beta+\gamma}^{j+1}u$
	and $Qu=v$
	\begin{equation*}
		Q(H_\alpha-H_\gamma)B_{\alpha+\beta}A_{\beta+\gamma}^jv=
		\frac\lambda N(N-2j-2)A_{\beta+\gamma}^{j+1}v.
	\end{equation*}
	It remains to use this formula in \eqref{abind},
	\begin{equation*}
		(A_{\alpha+\beta}B_{\alpha+\beta})A_{\beta+\gamma}^{j+1}v=
		\left(\frac\lambda N(j+1)(N-j)+\frac\lambda N(N-2j-2)
		\right)A_{\beta+\gamma}^{j+1}v=
	\end{equation*}
	\begin{equation*}
		=\frac\lambda N\left((j+2)N-j^2-j-2j-2\right)A_{\beta+\gamma}^{j+1}v=
		\frac\lambda N\left((j+2)(N-(j+1))\right)A_{\beta+\gamma}^{j+1}v.
	\end{equation*}
	This completes the induction step and hence the proof (of \eqref{abj}).
\end{proof}

\section{\texorpdfstring{Examples and coefficient patterns of unitary 
		$(\g,K)$-modules}
		{Examples and coefficient patterns of unitary (g,K)-modules}}

Let
\begin{equation*}
	N=\min_{n,m} \{n+m\;|\: (n,k,m)\in\operatorname{Supp}_K(V)
		\text{ for some }k \}.
\end{equation*}
where $\operatorname{Supp}_K(V)$ is the set of weights of module $V$.
That $N$ is already mentioned (and defined) in \eqref{bign}.
There are two situations: $N=0$ and $N>0$.

\subsection{$N=0$}

There are two cases: $k=4l$ and $k=2+4l$ for some $l\in\Z$. Let us consider
the first case. Let $v_0$ be the highest weight vector of weight $(0,0,0)$.
Let us assume that there exists $v_l\neq0,\;\forall l\in\Z$ and
$v_{l+1}=Qv_l,\;\forall l\in\Z$. Recall that
\begin{equation*}
	H_\beta v_l=2l v_l.
\end{equation*}
Set
\begin{equation}\label{xdef}
	A_\beta B_\beta v_0=xv_0=x_0v_0
\end{equation}
for some $x\in\C$. By abuse of notation, sometimes, we will write
\begin{equation*}
	A_\beta B_\beta=x=x_0.
\end{equation*}
Since only real negative values of $x$ can give rise to unitary modules, we
assume $x\in\mathbb R$ and $x<0$. Further restrictions on $x$, when needed, will
become clear from the coefficients in the examples below.

Since $v_0$ has weight $(0,0,0)$, $H_\beta v_0=0$, $Qv_0$ has weight $(0,4,0)$
and $H_\beta Qv_0=2$. By \eqref{abu},
\begin{equation*}
	A_\beta B_\beta Qv_0=QA_\beta B_\beta v_0+
	Q(H_\alpha+H_\gamma+2)(1-H_\beta)v_0=Qxv_0+2Qv_0=(x+2)Qv_0.
\end{equation*}
Hence, the operator $A_\beta B_\beta$ acts on $Qv_0$ by multiplication by
$x_1=x+2$. Let us carry out one more calculation,
\begin{equation*}
	A_\beta B_\beta Q^2v_0=
	QA_\beta B_\beta Qv_0+Q(H_\alpha+H_\gamma+2)(1-H_\beta)Qv_0=
	Q(x+2)Qv_0-2Q^2v_0=xQ^2v_0.
\end{equation*}
Hence, the operator $A_\beta B_\beta$ acts on $Q^2v_0$ by multiplication by
$x_2=x$. We can continue, but it is easy to guess the formula. Let $v_l$
be the highest weight vector of weight $(0,4l,0)$. Set $x_l$ by
\begin{equation}\label{ab}
	A_\beta B_\beta v_l=x_lv_l.
\end{equation}
We claim that
\begin{equation}\label{xldef}
	x_l=x-2l(l-2),\quad l\in\Z.
\end{equation}
The proof goes by induction. The base of induction is clear: $x_0=x$.
Now, let us assume that the statement is proved for some $l\geq0$. Let us
notice that $n=m=0$ and $A_\beta v_l=0,\;\forall l\in\Z$. Now
\begin{align*}
	A_\beta B_\beta Qv_l&=QA_\beta B_\beta v_l+2Q(1-2l)v_l=\\
	&=(x-2l(l-2)+2-4l)Qv_l=(x-2l^2+2)Qv_l=\\
	&=(x-2(l+1)(l-1))Qv_l
\end{align*}
and it completes the inductive step for positive $l$. If $l$ is negative,
the statement also holds. The above calculation shows that the statement is
true for $l=-1$ (if it is not true for $l=-1$ then it is not true for $l=0$)
and then for $l=-2$ and so on. Also, we could use formulas for
$A_\beta B_\beta R$ that we have mentioned in Remark \ref{othcom}. Let us
recall that $QRv$ is some multiple of $v$, but the value of the composition
$A_\beta B_\beta$ remains unchanged.

Let us calculate expressions $B_{\alpha+\beta+\gamma}A_{\alpha+\beta+\gamma}$.
By \eqref{nmnula},
\begin{equation}\label{ba}
	B_{\alpha+\beta+\gamma}A_{\alpha+\beta+\gamma}=A_\beta B_\beta-2k=
	x-2l(l-2)-8l=x-2l^2+4l-8l=x-2l(l+2)
\end{equation}
Define
\begin{equation*}
	y_l=x-2l(l+2).
\end{equation*}
Obviously, $y_l=x_{l+2}$. However, sometimes we will use $y_l$.
It remains to check that \eqref{bad} is satisfied. Let $v_l$ be a highest
weight vector of weight $(0,k,0)=(0,4l,0)$.
Then the left-hand side of \eqref{bad} by \eqref{ba} is equal to
\begin{equation*}
	B_{\alpha+\beta+\gamma}A_{\alpha+\beta+\gamma}Rv_l=(x-2(l-1)(l+1))Rv_l.
\end{equation*}
The right-hand side of \eqref{bad} by \eqref{ba} is equal to
\begin{equation*}
	R(x-2l(l+2))v_l+2\cdot(2l+1)Rv_l=(x-2l^2-4l+4l+2)Rv_l=(x-2l^2+2)v_l.
\end{equation*}
It shows that \eqref{bad} is satisfied.

Let us summarize our results. For any unitary $(\g,K)$-module which contains
weight vectors $(0,4l,0)$ for $l\in\mathbb Z$ (and possibly some other
weights), let $x$ be a negative real number defined by \eqref{xdef}. Then the
coefficients $A_\beta B_\beta$ at weights $(0,4l,0)$ are given by
\eqref{xldef}, and the coefficients
$B_{\alpha+\beta+\gamma}A_{\alpha+\beta+\gamma}$ at weights $(0,4l,0)$ are
given by \eqref{ba}. All other coefficients are equal to $0$.

A few comments are worth mentioning. The calculations are similar when the
$(\g,K)$-module contains weights $(0,4l+2,0)$ instead of $(0,4l,0)$.
The Langlands classification generically involves two real parameters. We
will obtain another real parameter in the next section. Finally, a natural
question arises: can one construct a unitary $(\g,K)$-module for any choice
of $x$? There is strong evidence that the answer is positive, and we are
currently working on a rigorous proof.

\subsection{\texorpdfstring{$N>0$}{N greater than 0}}

There are two cases: In the first case, the set of weights
(more precisely, the set of weights for which there exist nonzero
highest weight vectors) contains weights of form $(0,k,N)$ (for some $k$) and
$(N,k,0)$. The set of weights of the form $(n,k,m)$ satisfying $n+m=N$ will be
determined later. However, we have already discussed coefficients in
Proposition \ref{coefabbc}. It should be noted that Proposition \ref{coefabbc}
requires the existence of weights $(N,k,0)$ and $(0,k,N)$.
In the second case, the set of weights has the form $\{(n,0,n),(n+1,-2,n+1),
(n+1,2,n+1),(n+2,-4,n+2),(n+2,0,n+2),(n+2,4,n+2),\ldots\}$ and all weights
have the multiplicity one. The set of weights
is significantly smaller and it will correspond to ladder representations.
At first glance, this may seem somewhat surprising. However, the explanation is
very simple: there are no weights of the form $(n,k,m)$ such that $n>m$,
$m\neq0$. This follows from Proposition \ref{system}.

Hence, let us analyze the set of nonzero highest weight vectors of weight
$(n,k,m)$ satisfying $n+m=N$. We aim to obtain the first or the second
case. Let us assume that there are weights such that $n\geq m$.
The case $n\leq m$ is treated similarly.
Let $v$ be the highest weight vector such that $n-m$ is maximal.
It means that that there are no vectors of weight $(n+1,k\pm2,m-1)$.
In particular, this implies that $B_{\alpha+\beta}A_{\alpha+\beta}=0$
and $A_{\beta+\gamma}B_{\beta+\gamma}=0$. Since $B_\beta A_\beta=0$ and
$A_{\alpha+\beta+\gamma}B_{\alpha+\beta+\gamma}=0$, the system given in
Proposition \ref{system} transforms to a system of 4 equations in 4 unknowns,
\begin{flushleft}
	$\ds \frac1{(n+2)(m+2)}A_\beta B_\beta
	-\frac1{(n+1)(m+1)(m+2)}B_{\beta+\gamma}A_{\beta+\gamma}-$
\end{flushleft}
\begin{flushright}
	$\ds -\frac1{(n+1)(n+2)(m+1)(m+2)}
	B_{\alpha+\beta+\gamma}A_{\alpha+\beta+\gamma}=\frac12(k-n-m)$
\end{flushright}
\begin{flushleft}
	$\ds \frac1{(n+1)(m+2)}A_{\alpha+\beta}B_{\alpha+\beta}
	+\frac1{(n+1)(n+2)(m+2)}A_\beta B_\beta-$
\end{flushleft}
\begin{flushright}
	$\ds -\frac1{(n+2)(m+1)(m+2)}
	B_{\alpha+\beta+\gamma}A_{\alpha+\beta+\gamma}=\frac12(k+n-m)$
\end{flushright}
\begin{flushleft}
	$\ds -\frac1{(n+1)(m+2)}B_{\beta+\gamma}A_{\beta+\gamma}
	+\frac1{(n+2)(m+1)(m+2)}A_\beta B_\beta-$
\end{flushleft}
\begin{flushright}
	$\ds -\frac1{(n+1)(n+2)(m+2)}
	B_{\alpha+\beta+\gamma}A_{\alpha+\beta+\gamma}=\frac12(k-n+m)$
\end{flushright}
\begin{flushleft}
	$\ds -\frac1{(n+2)(m+2)}B_{\alpha+\beta+\gamma}A_{\alpha+\beta+\gamma}
	+\frac1{(n+1)(m+1)(m+2)}A_{\alpha+\beta}B_{\alpha+\beta}+$
\end{flushleft}
\begin{flushright}
	$\ds +\frac1{(n+1)(n+2)(m+1)(m+2)}A_\beta B_\beta=\frac12(k+n+m)$
\end{flushright}
If $m=0$, then we have seen that the system reduces to system of 2 equations in
4 unknowns (Remark \ref{4to2}). This situation is discussed in
Proposition \ref{coefabbc} and it is the first case mentioned above.
Hence, let us assume that $m>0$.
Observe that $A_\beta B_\beta$ (with a positive sign) and
$B_{\alpha+\beta+\gamma}A_{\alpha+\beta+\gamma}$ (with a negative sign) occur
in all equations. Also, $B_{\beta+\gamma}A_{\beta+\gamma}$ (with a positive
sign) occur in the first and the third equation and
$A_{\alpha+\beta}B_{\alpha+\beta}$ (with a negative sign) occur in the second
and the fourth equation. It is easy to solve this system. Namely, one should
multiply the first equation by $m+1$ and subtract the third equation from that
product. It produces
\begin{equation*}
	\frac{m(m+2)}{(n+2)(m+1)(m+2)}A_\beta B_\beta=\frac12(k-n-m-2)m
\end{equation*}
or ($m>0$)
\begin{equation*}
	A_\beta B_\beta=\frac12(k-n-m-2)(n+2)(m+1).
\end{equation*}
Similarly, the second and the fourth equation produce
\begin{equation*}
	B_{\alpha+\beta+\gamma}A_{\alpha+\beta+\gamma}=\frac12(-k-n-m-2)(n+2)(m+1).
\end{equation*}
Now, it is easy to calculate remaining two coefficients,
\begin{equation*}
	B_{\beta+\gamma}A_{\beta+\gamma}=\frac12(-k+n-m)n(m+1)
\end{equation*}
and
\begin{equation*}
	A_{\alpha+\beta}B_{\alpha+\beta}=\frac12(k+n-m)n(m+1).
\end{equation*}
Hence
\begin{equation*}
	B_{\beta+\gamma}A_{\beta+\gamma}+A_{\alpha+\beta}B_{\alpha+\beta}=
	(n-m)n(m+1).
\end{equation*}
Now, we restrict our attention to unitary $(\g,K)$ modules. By \eqref{dneg},
coefficients $B_{\beta+\gamma}A_{\beta+\gamma}$ and
$A_{\alpha+\beta}B_{\alpha+\beta}$ must be non-positive real numbers.
It produces $n=m$ and $k=0$. Let us recall that $n+m=N$.

Let us consider the set of weights satisfying $n+m=N+2$. For weights other
than $(n+1,\pm2,n+1)$, the coefficients $B_\beta A_\beta$ and
$A_{\alpha+\beta+\gamma}B_{\alpha+\beta+\gamma}$ are both equal to $0$,
and we can repeat the above argument. Since $n+m=N+2$, Proposition~\ref{parity}
implies that $k\neq0$. It follows that the only possible weights are of the
form $(n+1,\pm2,n+1)$.

Continuing in the same way, we obtain weights of the form
\[ (n+2r,-2r+4s,n+2r), \qquad r,s\in\mathbb N\cup\{0\},\quad 0\leq s\leq r, \]
which gives the second case described above.

\section{\texorpdfstring{Conditional unitary coefficient patterns}
	{Conditional unitary coefficient patterns}}

\subsection{\texorpdfstring{$N=0$}{N=0}}\label{nis0}

We continue with the case where the $(\g,K)$ module $V$ contains the
weights $(0,4l,0)$ for $l\in\Z$. The case when $V$ contains weights $(0,4l+2,0)$
is analogous. We restrict our attention to irreducible unitary $(\g,K)$ modules.
We assume that for every weight $(0,4l,0)$ there exists a nonzero vector $v_l$.
We normalize the vectors $v_l$ by requiring that
\begin{equation*}
	\|v_l\|=1,\qquad \forall\, l\in\Z.
\end{equation*}
Since the norms of the vectors $v_l$ may be chosen arbitrarily, this is merely
a convenient normalization. The norms of all other vectors are then determined
by the coefficients.

Let us fix some $l$ and consider the vectors: $p=B_\beta v_l$ and
$q=A_{\alpha+\beta+\gamma}v_{l-1}$. These vector have the same weight, namely
$(1,4l-2,1)$. Let us calculate their norms. First, we compute $\|p\|^2=(p,p)$,
\begin{equation*}
	(p,p)=(B_\beta v_l,B_\beta v_l)=(B_\beta^*B_\beta v_l,v_l)
	\stackrel{\eqref{bstar}}{=}
	(-A_\beta\frac1{(H_\alpha+1)(H_\gamma+1)}B_\beta v_l,v_l)=
	-\frac{x_l}4(v_l,v_l)=-\frac{x_l}4.
\end{equation*}
Similarly, by \eqref{astar},
\begin{equation*}
	(q,q)=(A_{\alpha+\beta+\gamma}v_{l-1},A_{\alpha+\beta+\gamma}v_{l-1})=
	-\frac{y_{l-1}}4.
\end{equation*}

However, the vector $p$ and $q$ may be collinear.
Define
\begin{equation}\label{qlambda}
	Qv_{l-1}=\lambda v_l
\end{equation}
and
\begin{equation}\label{rmu}
	Rv_l=\mu v_{l-1}
\end{equation}
Now,
\begin{equation*}
	(p,q)=(B_\beta v_l,A_{\alpha+\beta+\gamma}v_{l-1})
	\stackrel{\eqref{bstar}}{=}
	-\frac14(v_l,A_\beta A_{\alpha+\beta+\gamma}v_{l-1})
	\stackrel{\eqref{comma}}{=}
	\frac12(v_l,Qv_{l-1})
	\stackrel{\eqref{qlambda}}{=}
	\frac12\overline{\lambda}
\end{equation*}
and
\begin{equation*}
	(p,q)=(B_\beta v_l,A_{\alpha+\beta+\gamma}v_{l-1})
	\stackrel{\eqref{astar}}{=}
	-\frac14(B_{\alpha+\beta+\gamma}B_\beta v_l,v_{l-1})
	\stackrel{\eqref{commb}}{=}
	\frac12(Rv_l,v_{l-1})
	\stackrel{\eqref{rmu}}{=}
	\frac12\mu.
\end{equation*}
This shows that $\overline{\lambda}=\mu$. By \eqref{qlambda} and \eqref{rmu},
\begin{equation*}
	QRv_l=|\lambda|^2v_l\qquad\mbox{and}\qquad RQv_{l-1}=|\lambda|^2v_{l-1}.
\end{equation*}
Also,
\begin{equation*}
	|\lambda|^2=|\lambda|^2(v_l,v_l)=(QRv_l,v_l)\stackrel{\eqref{qrstar}}{=}
	(Rv_l,Rv_l).
\end{equation*}
Let us denote $|\lambda|^2=|\mu|^2\in\R$ by $z_l$,
\begin{equation*}
	z_l=|\lambda|^2=|\mu|^2\geq0.
\end{equation*}
We can proceed with our analysis.
If $\lambda=0$ then we get reducibility which will not be considered here.
Hence, we assume $z_l>0$ and
one takes $w_{l-1}=\frac{\sqrt{z_l}}{\lambda}v_{l-1}$ instead of
$v_{l-1}$. It is easy to check that $(w_{l-1},w_{l-1})=1$,
\eqref{qlambda} transforms to
\begin{equation}\label{qlambdanew}
	Qw_{l-1}=\frac{\sqrt{z_l}}{\lambda}\lambda v_l=\sqrt{z_l}v_l
\end{equation}
and
\begin{equation}\label{rmunew}
	Rv_l=\sqrt{z_l}w_{l-1}.
\end{equation}
We can continue and choose a basis such that \eqref{qlambdanew} and
\eqref{rmunew} are satisfied.

Let us now, investigate $z_l$ further.
By the Cauchy-Schwarz inequality, we obtain
\begin{equation*}
	|(p,q)|^2\leq\|p\|^2\|q\|^2
\end{equation*}
and therefore
\begin{equation*}
	\frac14z_l\leq\frac1{16}x_ly_{l-1}
\end{equation*}
or
\begin{equation*}
	z_l\leq\frac14x_ly_{l-1}.
\end{equation*}

We begin with the case
\begin{equation*}
	z_l=\frac14x_ly_{l-1}.
\end{equation*}
We have the equality in the Cauchy-Schwarz inequality. Hence, vectors $p$ and
$q$ are collinear. In Theorem~\ref{qrel}, we established a relation involving
commutators. In our situation ($n=0$ and $m=0$), \eqref{qrrel} reduces to
\begin{equation*}
	QR-RQ=\frac12\left((H_\beta-1)y_l+(H_\beta+1)x_l\right).
\end{equation*}
It enables us to compute $z_{l+1}=RQ$,
\begin{equation*}
	z_{l+1}=RQ=QR-\frac12\left((H_\beta-1)y_l+(H_\beta+1)x_l\right)=
\end{equation*}
\begin{equation*}
	=\frac14x_ly_{l-1}
	-\frac12\left((2l-1)(x-2l(l+2))+(2l+1)(x-2l(l-2))\right)=
\end{equation*}
\begin{equation*}
	=\frac14x_ly_{l-1}
	-\frac12\left(4lx-2l((2l-1)(l+2)+(2l+1)(l-2))\right)=
\end{equation*}
\begin{equation*}
	=\frac14x_ly_{l-1}-\frac12\left(4lx-2l(4l^2-4)\right)
	=\frac14x_ly_{l-1}-2ly_{l-1}
	=\frac14y_{l-1}\left(x_l-8l\right)
	=\frac14y_{l-1}x_{l+2}=\frac14x_{l+1}y_l.
\end{equation*}
Hence,
\begin{equation*}
	z_{l+1}=\frac14x_{l+1}y_l
\end{equation*}
for our fixed $l$ and, by induction, if follows
\begin{equation*}
	z_{l+1}=\frac14x_{l+1}y_l.
\end{equation*}
for all $l\in\Z$.
Hence, vectors $A_{\alpha+\beta+\gamma}v_{l-1}$ and $B_\beta v_l$ are collinear
for all $l\in\Z$.
We can continue and compute vectors for weights $(2,k,2)$.
This is the subject of our current work.
It turns out that all such weights have multiplicity $1$.

We now consider the case
\begin{equation*}
	z_l<\frac14x_ly_{l-1}.
\end{equation*}
Again, let us fix one $l$. Define $D$ by
\begin{equation*}
	z_l=\frac14\left(x_ly_{l-1}-D\right)
\end{equation*}
for this fixed $l$. Let us use \eqref{qrrel} again. Since $z_l-z_{l+1}$ is
determined, if one (fixed) $z_l$ decreases by $\frac D4$, then all $z_l$
decrease by the same amount Hence, we define
\begin{equation*}
	z_l=\frac14\left(x_ly_{l-1}-D\right)
\end{equation*}
for all $l\in\Z$. It remains to determine the range of $D$. Since $z_l\geq0$,
\begin{equation}\label{ddef}
	0\leq D\leq\min_l\{x_ly_{l-1}\}.
\end{equation}
It is easy to determine $\min_l{x_ly_{l-1}}$ since $x_l$ and
$y_l=x_{l+2}$ are quadratic functions. If
\[ D=\min_l{x_ly_{l-1}}, \]
then at least one of the coefficients $z_l$ vanishes, and the corresponding
$Q/R$ transition disappears. This is precisely the boundary case associated
with reducibility; we do not analyze it further here.

Thus, the two real parameters $x$ and $D$, with $D$ satisfying
\eqref{ddef}, parametrize the coefficient systems obtained above. These
systems arise from the unitarity conditions and are intended to describe
irreducible unitary $(\g,K)$-modules. We do not claim here that every
admissible pair $(x,D)$ gives rise to such a module; the existence and
irreducibility questions require separate consideration. At the boundary
$D=\min_l{x_ly_{l-1}}$, one of the coefficients $z_l$ vanishes, leading
to the reducible case discussed above. In the other boundary case, $D=0$,
the resulting coefficient system has multiplicity one. A similar discussion
applies to the parity class $k=4l+2$.

The known results on the unitary dual of $SU(2,2)$ provide strong evidence that
these coefficient systems correspond to unitary $(\g,K)$-modules. A complete
construction from the coefficients obtained above will be considered separately.

\subsection{\texorpdfstring{$N>0$ and $n>m$}
	{N greater than 0 and n greater than m}}

In the preceding case $N=0$, the coefficient system is determined by two real
parameters. We shall see that the same is true in the present case: the
corresponding coefficient system is again determined by two real parameters.

If $N$ even, there are two parity classes: one class contains the weight
$(N,0,0)$ and the other class contains the weight $(N,2,0)$.
If $N$ is odd, there are again two parity classes: one class contains the weight
$(N,1,0)$ and the other contains the weight $(N,3,0)$.
Let us consider the first case.

Let us fix the weight $(N,0,0)$ and denote the coefficient
$A_{\alpha+\beta}B_{\alpha+\beta}$ by $x$ and the coefficient
$B_{\beta+\gamma}A_{\beta+\gamma}$ by $y$. By Proposition~\ref{abbcmnul},
within a component on which the stated transitions remain nonzero,
we can compute the coefficients $A_{\alpha+\beta}B_{\alpha+\beta}$ and
$B_{\beta+\gamma}A_{\beta+\gamma}$ at all weights of the form $(N,4l,0)$.
For example, for the weight $(N,4,0)$,
\begin{equation*}
	A_{\alpha+\beta}B_{\alpha+\beta}=x-N\frac N2.
\end{equation*}
By Proposition \ref{coefabbc}, one can calculate coefficients
$A_{\alpha+\beta}B_{\alpha+\beta}$ and $B_{\beta+\gamma}A_{\beta+\gamma}$
at all weights of the form $(n,k,m)$ satisfying $n+m=N$. For example,
for the weight $(N-1,2,1)$,
\begin{equation*}
	A_{\alpha+\beta}B_{\alpha+\beta}=x\frac{2(N-1)}N
\end{equation*}
Finally, by Proposition~\ref{system}, one can compute coefficients
$B_{\alpha+\beta+\gamma}A_{\alpha+\beta+\gamma}$ and $A_\beta B_\beta$.
It is clear that Proposition~\ref{system} produces
$B_{\alpha+\beta+\gamma}A_{\alpha+\beta+\gamma}$ and $A_\beta B_\beta$
at weights $(N,k,0)$ and $(0,k,N)$.
Namely, we have a system of 2 equations (by Remark~\ref{4to2}) in 2 unknowns.
However, we have four equations in two unknowns, so the system is
overdetermined and imposes additional compatibility conditions on $x$ and
$y$. Nevertheless, a direct calculation of all the coefficients shows that
these compatibility conditions are satisfied.

It is clear that $x<0$ and $y<0$ since $\alpha+\beta$ and $\beta+\gamma$ are
noncompact roots. Also, one can use Proposition~\ref{star} to prove this
statement. It is easy to give a precise restriction on $x$ and $y$. We will not
do it now.

\subsection{\texorpdfstring{$N>0$ and $n=m$}{N greater than 0 and n=m}}

We have already seen that the set of $K$ types (more precisely, the set of
weights of highest weight vectors) has the form
\begin{equation}\label{weights}
	\{(n+p+q,2p-2q,n+p+q)\:|\:n\in\N,p,q\in\N\cup\{0\}\}
\end{equation}
and multiplicities are always 1. Namely, we start with the weight $(n,0,n)$
and then apply operators $A_{\alpha+\beta+\gamma}$ ($p$ times) and
$B_\beta$ ($q$ times). It remains to calculate our coefficients.
Obviously, $A_{\alpha+\beta}B_{\alpha+\beta}=0$,
$B_{\alpha+\beta}A_{\alpha+\beta}=0$, $A_{\beta+\gamma}B_{\beta+\gamma}=0$ and
$B_{\beta+\gamma}A_{\beta+\gamma}=0$.
We use the system of 4 equations given by Proposition \ref{system}.
Since $n=m$, $B_{\beta+\gamma}A_{\beta+\gamma}=0$
$A_{\beta+\gamma}B_{\beta+\gamma}=0$,
$A_{\alpha+\beta}B_{\alpha+\beta}=0$ and
$B_{\alpha+\beta}A_{\alpha+\beta}=0$, the system can be simplified.
Also, we will write
$n+p+q$ instead of $n$ and $m$ and $2p-2q$ instead of $k$,
\begin{flushleft}
	$\ds \frac1{(n+p+q+2)^2}A_\beta B_\beta-
	\frac1{(n+p+q+1)^2}B_\beta A_\beta-$
\end{flushleft}
\begin{flushright}
	$\ds -\frac1{(n+p+q+1)^2(n+p+q+2)^2}
	B_{\alpha+\beta+\gamma}A_{\alpha+\beta+\gamma}=-n-2q$
\end{flushright}
\begin{equation*}
	\frac1{(n+p+q+1)(n+p+q+2)^2}A_\beta B_\beta-\frac1{(n+p+q+1)(n+p+q+2)^2}
	B_{\alpha+\beta+\gamma}A_{\alpha+\beta+\gamma}=p-q
\end{equation*}
\begin{equation*}
	\frac1{(n+p+q+1)(n+p+q+2)^2}A_\beta B_\beta-\frac1{(n+p+q+1)(n+p+q+2)^2}
	B_{\alpha+\beta+\gamma}A_{\alpha+\beta+\gamma}=p-q
\end{equation*}
\begin{flushleft}
	$\ds \frac1{(n+p+q+1)^2}A_{\alpha+\beta+\gamma}B_{\alpha+\beta+\gamma}
	-\frac1{(n+p+q+2)^2}B_{\alpha+\beta+\gamma}A_{\alpha+\beta+\gamma}+$
\end{flushleft}
\begin{flushright}
	$\ds +\frac1{(n+p+q+1)^2(n+p+q+2)^2}A_\beta B_\beta=n+2p$
\end{flushright}
Observe that the second and the third equations coincide.
Let us solve this system for all $p$ and $q$.
We start with $p=0$ and $q$=0 and assume $B_\beta A_\beta=0$ and
$A_{\alpha+\beta+\gamma}B_{\alpha+\beta+\gamma}=0$. Now, we have a system
of 3 equations in 2 unknowns. The system has a solution which satisfies
all equations:
$B_{\alpha+\beta+\gamma}A_{\alpha+\beta+\gamma}(0,0)=
A_\beta B_\beta(0,0)=-(n+1)^2(n+2)$. We included $(0,0)$ to emphasize
that coefficients are computed for $p=0$ and $q=0$.
We use this solution to solve systems for $p+q=1$. For $p=0$ and $q=1$,
$A_\beta B_\beta(0,1)=-2(n+2)^2(n+3)$ and
$B_{\alpha+\beta+\gamma}A_{\alpha+\beta+\gamma}(0,1)=-(n+1)(n+2)(n+3)$.
For $p=1$ and $q=0$, $A_\beta B_\beta(1,0)=-(n+1)(n+2)(n+3)$ and
$B_{\alpha+\beta+\gamma}A_{\alpha+\beta+\gamma}(1,0)=-2(n+2)^2(n+3)$.

We can continue and calculate coefficients for $p+q=2$ and $p+q=3$. Based on
these computations, we conjecture the following formula for the coefficients,
\begin{equation}\label{abbpq}
	A_\beta B_\beta(p,q)=-(q+1)(n+q+1)(n+p+q+1)(n+p+q+2),
\end{equation}
\begin{equation}\label{baabcpq}
	B_{\alpha+\beta+\gamma}A_{\alpha+\beta+\gamma}(p,q)=
	-(p+1)(n+p+1)(n+p+q+1)(n+p+q+2),
\end{equation}
\begin{equation}\label{babpq}
	B_\beta A_\beta(p,q)=-q(n+q)(n+p+q)(n+p+q+1)
\end{equation}
and
\begin{equation}\label{ababcpq}
	A_{\alpha+\beta+\gamma}B_{\alpha+\beta+\gamma}(p,q)=-p(n+p)(n+p+q)(n+p+q+1).
\end{equation}
Note that $B_\beta A_\beta(p,q)=A_\beta B_\beta(p,q-1)$ and
$A_{\alpha+\beta+\gamma}B_{\alpha+\beta+\gamma}(p,q)=
B_{\alpha+\beta+\gamma}A_{\alpha+\beta+\gamma}(p-1,q)$.
For $p=0$ and $q=0$, $B_\beta A_\beta=0$ and
$A_{\alpha+\beta+\gamma}B_{\alpha+\beta+\gamma}=0$ as we assumed.

Let us show that the first equation is satisfied. Formulas \eqref{abbpq},
\eqref{babpq} and \eqref{baabcpq} yield
\begin{flushleft}
	$\ds -\frac1{n+p+q+2}(q+1)(n+q+1)(n+p+q+1)+\frac1{n+p+q+1}q(n+q)(n+p+q)+$
\end{flushleft}
\begin{flushright}
	$\ds +\frac1{(n+p+q+1)(n+p+q+2)}(p+1)(n+p+1)=-n-2q$
\end{flushright}
which can be rewritten as
\begin{flushleft}
	$\ds (q+1)(n+q+1)(n+p+q+1)^2-q(n+q)(n+p+q)(n+p+q+2)-$
\end{flushleft}
\begin{flushright}
	$\ds -(p+1)(n+p+1)=(n+2q)(n+p+q+1)(n+p+q+2)$
\end{flushright}
Although this expression appears rather complicated, it can be proved directly,
\begin{flushleft}
	$\ds (q(n+q)+(n+2q)+1)(n+p+q+1)^2-q(n+q)(n+p+q+1)^2+q(n+q)-$
\end{flushleft}
\begin{flushright}
	$\ds -(p+1)(n+p+1)=(n+2q)(n+p+q+1)^2+(n+2q)(n+p+q+1)$.
\end{flushright}
After simplification, we obtain
\begin{equation*}
	(n+p+q+1)^2+q(n+q)-(p+1)(n+p+1)=(n+2q)(n+p+q+1).
\end{equation*}
We can simplify this expression further and obtain
\begin{equation*}
	(p-q+1)(n+p+q+1)=(p+1)(n+p+1)-q(n+q).
\end{equation*}
It is straightforward to verify that the expression is valid.

The second (and third) equation is easier to verify. We start with
\begin{equation*}
	-(q+1)(n+q+1)+(p+1)(n+p+1)=(p-q)(n+p+q+2).
\end{equation*}
After simplification, we obtain
\begin{equation*}
	-(q+1)(q+1)+(p+1)(p+1)=(p-q)(p+q+2)
\end{equation*}
and the claim follows.

The proof of the last equation is analogous to that of the first and is omitted.
Hence, we have proved

\begin{proposition}
	Suppose that $V$ is in the second case described above, so that its weights
	are of the form \eqref{weights} and have multiplicity one. Then, for every
	$n\geq1$ and $p,q\geq0$, the coefficients are given by
	\eqref{abbpq}--\eqref{ababcpq}. These formulas satisfy the four scalar
	equations above and the two predecessor identities following those formulas,
	and all the resulting coefficients are nonpositive.
\end{proposition}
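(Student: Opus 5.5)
The plan is to treat the formulas \eqref{abbpq}--\eqref{ababcpq} as candidate solutions and check the three conditions in the statement one at a time. Each check uses only the specialized form of Proposition~\ref{system} at the weight $(n+p+q,2p-2q,n+p+q)$. Because every weight in \eqref{weights} has multiplicity one, all compositions $A_\delta B_\delta$ and $B_\delta A_\delta$ are genuine scalars (Subsection~\ref{coef}). The four mixed coefficients attached to $\alpha+\beta$ and $\beta+\gamma$ vanish, since their targets $(n\pm1,k\pm2,m\mp1)$ do not lie in \eqref{weights}. So only the four displayed scalar equations in $A_\beta B_\beta$, $B_\beta A_\beta$, $B_{\alpha+\beta+\gamma}A_{\alpha+\beta+\gamma}$ and $A_{\alpha+\beta+\gamma}B_{\alpha+\beta+\gamma}$ remain to be verified.

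The first step is the predecessor identities. For these we substitute $q\mapsto q-1$ in \eqref{abbpq} and compare with \eqref{babpq}, and likewise $p\mapsto p-1$ in \eqref{baabcpq} against \eqref{ababcpq}. The identities $(q-1+1)(n+q)(n+p+q)(n+p+q+1)=q(n+q)(n+p+q)(n+p+q+1)$ and its $p$-analogue hold term by term. These identities are also what consistency demands: by the discussion after \eqref{bvbv}, $\lambda(v,\delta)=\mu(B_\delta v,\delta)$ along nonzero transitions. Since $B_\beta$ sends $(p,q-1)$ to $(p,q)$ and $A_{\alpha+\beta+\gamma}$ sends $(p-1,q)$ to $(p,q)$, the formula for $B_\beta A_\beta$ at $(p,q)$ must coincide with $A_\beta B_\beta$ at $(p,q-1)$, and similarly for $\alpha+\beta+\gamma$. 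At $p=0$ or $q=0$, \eqref{babpq} and \eqref{ababcpq} vanish, which matches the boundary assumption.

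Next we verify the scalar equations. The second and third equations coincide, and after cancelling the common factor $(n+p+q+1)(n+p+q+2)$ the second reduces to $(p+1)^2-(q+1)^2=(p-q)(p+q+2)$. The first equation is cleared of denominators as in the displayed computation and reduces to the polynomial identity $(p-q+1)(n+p+q+1)=(p+1)(n+p+1)-q(n+q)$. This can be checked by expanding both sides, or by noting that both sides are linear in $n$ and comparing coefficients. For the fourth equation we use the symmetry $p\leftrightarrow q$, $\beta\leftrightarrow\alpha+\beta+\gamma$, $A\leftrightarrow B$. This symmetry interchanges \eqref{abbpq} with \eqref{baabcpq} and \eqref{babpq} with \eqref{ababcpq}, and sends $-n-2q$ to $-n-2p$. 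Under it the first equation becomes the fourth up to an overall sign and the rearrangement $n+2p$. I expect this to be the main obstacle: the signs must line up, because the fourth equation has $A_{\alpha+\beta+\gamma}B_{\alpha+\beta+\gamma}$ with coefficient $+$, whereas the first has $B_\beta A_\beta$ with coefficient $-$. If the symmetry argument does not close cleanly, I will instead clear denominators directly and reduce to $(q-p+1)(n+p+q+1)=(q+1)(n+q+1)-p(n+p)$, which is the first identity with $p$ and $q$ swapped.

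Finally, nonpositivity is immediate. Each formula is minus a product of factors such as $q+1$, $n+q+1$ and $n+p+q+1$, which are all nonnegative for $n\geq1$ and $p,q\geq0$. This is consistent with \eqref{dneg}.
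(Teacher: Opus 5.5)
The sign issue you anticipate does not arise. The substitution $p\leftrightarrow q$, $A_\beta B_\beta\leftrightarrow B_{\alpha+\beta+\gamma}A_{\alpha+\beta+\gamma}$, $B_\beta A_\beta\leftrightarrow A_{\alpha+\beta+\gamma}B_{\alpha+\beta+\gamma}$ turns the first equation into exactly $-1$ times the fourth, because every term and the right-hand side flip together. The rest of your verification is correct and uses the same reductions as the paper: the second equation becomes $(p+1)^2-(q+1)^2=(p-q)(p+q+2)$, the first becomes $(p-q+1)(n+p+q+1)=(p+1)(n+p+1)-q(n+q)$, and the predecessor identities hold factor by factor.

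\textbf{Missing step.} Your proposal does not prove the first assertion of the statement, that the coefficients of $V$ \emph{are} given by \eqref{abbpq}--\eqref{ababcpq}. Checking that a candidate satisfies the constraints proves this only if the constraints have a unique solution. The paper obtains the formulas by solving the system level by level from $(0,0)$; you should make that determination explicit by induction on $p+q$.

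\begin{enumerate}
\item At $(p,q)$ with $q\geq1$, the true value of $B_\beta A_\beta$ equals the true $A_\beta B_\beta(p,q-1)$. If the transition $B_\beta$ out of $(p,q-1)$ is zero, Proposition~\ref{star}, positivity and multiplicity one force $A_\beta=0$ at $(p,q)$, so the equality still holds.
\item For $q=0$ this value is $0$ (the boundary condition you already invoke). By the induction hypothesis and your predecessor identity, it therefore agrees with \eqref{babpq} in all cases.
\item The first two equations are then a linear system in $A_\beta B_\beta(p,q)$ and $B_{\alpha+\beta+\gamma}A_{\alpha+\beta+\gamma}(p,q)$. Its determinant is $-a/\bigl((a+1)^3(a+2)^3\bigr)$ with $a=n+p+q$, which is nonzero because $a\geq n\geq1$. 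So these true values are forced and coincide with your formulas.
\item $A_{\alpha+\beta+\gamma}B_{\alpha+\beta+\gamma}(p,q)$ is fixed both by its predecessor identity and by the fourth equation. The agreement of the two is a consistency check, which your formulas pass.
\end{enumerate}

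This is where $n\geq1$ is genuinely needed; nonpositivity already holds for $n\geq0$. For $n=0$ the system at $(0,0,0)$ collapses to the single relation of Remark~\ref{4to2}, which leaves the free parameter of the $N=0$ case.
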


\section{Unitary dual of $SU(2,2)$}

Knapp and Speh determined the irreducible unitary representations of
$SU(2,2)$ in \cite{KnappSpeh1982}. This provides an opportunity to compare
their parametrization with our construction and to investigate the
relationship between the two approaches. Such a comparison requires a careful
analysis of all cases, particularly those involving reducibility. We do not
undertake a complete parameter-by-parameter comparison in the present paper,
but hope to return to this question in future work.

\bibliographystyle{alpha} 
\bibliography{b}

\end{document}